\definecolor{viola}{rgb}{0.3,0,0.7}
\definecolor{ciclamino}{rgb}{0.5,0,0.5}
\definecolor{rosso}{rgb}{0.8,0,0}
\newtheorem{theorem}{Theorem}[section]
\newtheorem{remark}[theorem]{Remark}
\newtheorem{corollary}[theorem]{Corollary}
\newtheorem{definition}[theorem]{Definition}
\newtheorem{proposition}[theorem]{Proposition}
\newtheorem{lemma}[theorem]{Lemma}
\newtheorem{example}[theorem]{Example}
\def\erre{{\mathbb{R}}}
\let\theta\vartheta
\let\phi\varphi
\def\w2{\stackrel{2}{\rightharpoonup}}
\def\strong2{\stackrel{2}{\rightarrow}}
\definecolor{darkred}{RGB}{153,0,0}
\definecolor{darkblue}{RGB}{0,0,204}
\newcommand{\R}{\mathbb{R}}
\def\RR{\mathbb{R}}
\def\M{\mathfrak{M}}
\def\h{\mathfrak h}
\def\H{\mathcal H}
\def\R{\mathcal R}
\def\L{\mathcal L}
\def\V{\mathcal V}
\def\tr{\mathrm{Tr}}
\def\bra#1{\langle #1|}
\def\ket#1{|#1\rangle}
\def\ketbra#1#2{|#1\rangle\langle#2|}
\def\id{\mathrm{Id}}
\def\TT{\mathcal{T}}
\def\AV{A({\cal V})}
\def\nn{\mathbb{N}}
\def\zz{\mathbb{Z}}
\def\rr{\mathbb{R}}
\def\cc{\mathbb{C}}
\def\M{\mathfrak{M}}
\def\L{\mathfrak L}
\def\HH{{\mathcal H}}
\def\hh{{\mathfrak h}}
\def\RR{{\mathcal R}}
\def\pp{{\mathbb P}}
\title{On a generalized Central Limit Theorem\\
and Large Deviations\\
for Homogeneous Open Quantum Walks}
\author{Raffaella Carbone\textsuperscript{$*$}, Federico Girotti\textsuperscript{$\dagger$} and Anderson Melchor Hernandez\textsuperscript{$\star$}\\Dipartimento di Matematica dell'Universit\`a di Pavia\\ via Ferrata 1, 27100 Pavia, Italy\\\textsuperscript{$*$}e-mail: \texttt{raffaella.carbone@unipv.it}\\ \textsuperscript{$*$}ORCID: \texttt{https://orcid.org/0000-0002-3005-4216}\\\textsuperscript{$\dagger$}e-mail: \texttt{f.girotti@campus.unimib.it }\\
\textsuperscript{$\dagger$}ORCID: \texttt{https://orcid.org/0000-0001-9303-1428}
\\\textsuperscript{$\star$}e-mail: \texttt{anderson.melchorhernand01@universitadipavia.it } }
\begin{document}

\maketitle

\begin{abstract}
\parbox{0.85\textwidth}{ 
We consider homogeneous open quantum random walks on a lattice with finite dimensional local Hilbert space and we study in particular the position process of the quantum trajectories of the walk. 
We prove that the properly rescaled position process asymptotically approaches
 a mixture of Gaussian measures.
 We can generalize the existing central limit type results and give more explicit expressions for the involved asymptotic quantities, dropping any additional condition on the walk.
 We use deformation and spectral techniques, together with reducibility properties of the local channel  associated with the open quantum walk. Further, we can provide a large deviations' principle in the case of a fast recurrent local channel and at least lower and upper bounds in the general case.

} 
\end{abstract}

\noindent {\bf Keywords:}
Central Limit Theorem, Homogeneous Open Quantum Random Walks, quantum trajectory, minimal enclosures, quantum recurrence and transience

\setlength{\parindent}{0pt}

\section{Introduction}  \label{Int}

Quantum walks are interesting mathematical objects, introduced in discrete and continuous time, widely studied for their applications in various fields. They can be thought of as Markov processes on a lattice where the evolution of the walker's position depends on local degrees of freedom.

A natural need for a definition of quantum random walks in an open environment arises also because unitary quantum walks are often of difficult physical implementation due to decoherence effects.
We study open quantum random walks (OQRW) as introduced in \cite{APSS} by Attal et al.. Such processes are a possible noncommutative generalization of classical Markov Chains and have applications in quantum computing, quantum optics, biology. In the last years they have been intensively studied, we refer to \cite{PS2} for a recent survey on the subject and e.g. to \cite{APS,MSKPE,PS3} for some applications. 
The quantum trajectories associated with an OQRW defined on a lattice $V$ produce a classical $V$-valued process which is not Markov in general. It has been the object of different studies and particular attention has been concentrated from the origin on the case of homogeneous OQRWs (HOQRWs) on infinite lattices. In this case, the quantum process has no invariant state and an interesting question is investigating the conditions for different kinds of asymptotic behaviors of the position process, such as the law of large numbers, central limit theorems, and large deviations (\cite{AGS,CP,KoYo, KKSY,KonYo,Pas}).

In \cite{AGS}, a central limit theorem (CLT) and a law of large numbers for particular HOQRWs was proved by the use of the Poisson equation and the CLT theorem for martingales. At the same time, the authors highlighted the difficulty to prove analogous results under weaker assumptions, even if the possibility of appearance of a mixture of Gaussians was already clear (\cite{PS}).
In \cite{CP}, a large deviation principle was also proved, and a different approach was used to prove the CLT, but the assumptions essentially remained the same. 
Since then, various papers have been devoted to investigating this problem: see for instance \cite{BrH,KoYo, KKSY, KonYo,Pe,Pas} (see Remark \ref{comparison} for more details).

Before proceeding, let us now introduce more precisely the mathematical objects we treat.
\begin{itemize}
\item 
Let $V \subset \rr^d$ denote a locally finite lattice, which without loss of generality we assume contains $0$, and is positively generated by a set $S=\{s_1,\dots,s_v\} \neq \{0\}$. The canonical example is $V=\zz^d$ and $S=\{\pm e_1,\dots, \pm e_d\}$ where $(e_1,\dots,e_d)$ is the canonical basis of $\rr^d$. 
\item
We denote by $\ell^2(V)$ the Hilbert space of square summable sequences indexed in $V$, describing the position of the particle in the quantum evolution, and we introduce a finite-dimensional Hilbert space $\h$ describing the internal degrees of freedom of the particle. 
 We fix $\{ \ket{\underline{k}}\}_{\underline{k} \in V}$ an orthonormal basis for $\ell^2(V)$. 
 \item
We consider a quantum system described by the separable complex Hilbert space $\H=\h\otimes \ell^2(V)$.
We denote by $B(\H)$  the von Neumann algebra of bounded linear operators on $\H$ and by $L^1(\HH)$ trace class operators on $\H$ (similarly for $B(\h)$ and  $L^1(\h)$).
Self-adjoint bounded operators correspond to physical observables, while unit-trace positive operators are called states and represent the noncommutative analog of probability densities.
\end{itemize}

A HOQRW (\cite{APSS}) $\M$ is a particular quantum channel acting on $L^1(\H)$ in such a way that, at each time step, the position of the evolution can go only to nearest neighbors and also the change in the local state only depends on the position shift. More precisely, $\M$ is defined through its Kraus form as
\begin{equation}\label{oqw1}
\begin{split}
\M: L^1(\H) &\rightarrow L^1(\H) \\
\omega
&\mapsto \sum_{\underline{k} \in V}\sum_{i=1}^v 
(L_i \otimes \ket{\underline{k}+s_i}\bra{\underline{k}})
\,  \omega \,
(L_i^*\otimes \ket{\underline{k}}\bra{\underline{k}+s_i}),
\end{split}
\end{equation}
where 
$\{L_i\}_{i=1}^v$ are operators in $B(\h)$ such that
$\sum_{i=1}^vL_i^*L_i=1_\hh$.
The auxiliary (or local) map is the quantum channel $\L$ on the space $L^1(\h)$ defined by
\begin{equation*}
\L:L^1(\h) \rightarrow L^1(\h), \qquad
\L(\sigma) = \sum_{i=1}^v L_i \sigma L_i^{*}.
\end{equation*}
We shall see that this auxiliary map is of primary importance in our study:  it completely characterizes $\mathfrak{M}$ and so it contains all essential information.

Given the open quantum random walk $\M$, we can then fix an initial state $\rho$ (a positive unit-trace operator in $L^1(\HH)$), and, following the usual construction for quantum trajectories, we can introduce the process $(X_n,\rho_n)_{n\geq 0}$, keeping track of the position $X_n$, valued in $V$, and of the internal state $\rho_n$ of the particle (a positive unit-trace operator in $L^1(\h)$). See subsection \ref{section:QT} for more precise definitions.

As we already mentioned, the main topic of this paper is about central limit type results and large deviations for the position process $(X_n)_n$.
The existing results were established assuming different conditions about the local map $\L$. Even if the terminology is not always the same, we can say that precise CLT and large deviations' principle have been proved only under the assumptions that the local map has a unique invariant state (sometimes also faithful).
Some partial results were obtained under particular reducibility conditions on $\L$ in the case it is fast recurrent (e.g. \cite{AGS} and \cite{KoYo}).
Our aim is to establish the best results in these directions without any restriction on the local map.  
For central limit type results, we can determine the mixture of Gaussians which asymptotically approaches the law of the position process: the description is complete of exact parameters of the involved Gaussians, the coefficients in the mixture are expressed through quantum absorption probabilities, and we can explicitly use a distance on the space of probability laws.

For both large deviations and central limit theorem, we use deformation techniques and spectral theory, (and G\"artner-Ellis' and Bryc's theorems, respectively), following some ideas and lines already used for the irreducible and fast recurrent case in \cite{CP}.
In order to tackle the difficulties due to the more general context, our main additional tools will base on noncommutative probabilistic features of the local channel $\L$: the (in general non-unique) decomposition of the local space in irreducible invariant domains (or enclosures) and the quantum absorption properties of the same domains.

In Section \ref{sec:intro} we shall describe the construction of quantum trajectories associated with the HOQRW and recall some basic notions about invariant domains and absorption operators. Then, at the beginning of Section \ref{sec:mixgauss}, we shall go back to these topics and add more details about the reducibility properties of a quantum channel and the associated decomposition of the local space $\hh$ in invariant domains.

In Section \ref{sec:singlegauss}, we shall determine a family of probability measures under which the position process verifies a central limit theorem. 
These probability measures are absolutely continuous with respect to the standard measure $\pp_\rho$, induced by the initial state $\rho$ of the evolution, and 
are naturally associated with the invariant domains of the local channel. The densities of these measures and the parameters of the limit Gaussian are explicitly written in terms of the initial state and of the particular invariant domain.
 
Then, in Section \ref{sec:mixgauss}, we shall go to the general case using the decomposition of the space $\hh$ and deducing an expression of $\pp_\rho$ as convex combinations of probability measures described in the previous section.
Without giving all the details, we can write the statement of the main theorem of the section,  a ``generalized CLT'' (see Theorem \ref{thm:clt} for a more precise statement):

\begin{theorem}  
Let $\pp_{\rho,n}$ the law of the process $\frac{X_n-X_0}{\sqrt{n}}$ under the measure $\pp_\rho$ induced by the initial state $\rho$ of the quantum process. Then
\[\lim_{n \rightarrow +\infty} {\rm dist}\left (\pp_{\rho,n}, \,\sum_{\alpha \in A} a_{\alpha}(\rho) {\mathcal N}(\sqrt{n} m_\alpha,D_{\alpha}) \right )=0
\]
where ${\mathcal N}(\sqrt{n} m_\alpha,D_{\alpha})$ is a Gaussian measure with mean $\sqrt{n} m_\alpha$ and covariance matrix $D_{\alpha}$; the parameters $a_{\alpha}(\rho)$, $m_\alpha$, $D_{\alpha}$ are explicitly determined and depend on the initial state of the walk and on the proper decomposition of the local space 
$\hh= \underset{\alpha\in A} {\oplus} \, \chi_\alpha \oplus {\cal T} $ associated with
 the local channel $\L$.
\end{theorem}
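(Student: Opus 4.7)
The plan is to reduce the general case to the single-enclosure results of Section 4 via the decomposition $\hh = \bigoplus_{\alpha \in A} \chi_\alpha \oplus \mathcal{T}$ of the local Hilbert space into minimal invariant subspaces plus a transient part. For each enclosure $\chi_\alpha$, the restricted local channel $\L|_{\chi_\alpha}$ is irreducible with a unique invariant state, and Section 4 provides, via deformation of $\L$ (perturbing the Kraus operators by factors $e^{i z \cdot s_i}$) and perturbative spectral analysis of the isolated eigenvalue $1$, a single-Gaussian CLT with explicit parameters $(m_\alpha, D_\alpha)$; these are read off from the first and second derivatives at $z=0$ of the dominant eigenvalue of the deformed channel.

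The bridge between the global process and the enclosure-restricted ones is the family of absorption operators $A_\alpha$ attached to the $\chi_\alpha$'s. These are the fixed points of the dual channel $\L^*$ obtained as the strong limits $A_\alpha = \lim_{n \to \infty}(\L^*)^n(P_{\chi_\alpha})$, they are positive, and satisfy $\sum_\alpha A_\alpha = 1_\hh$. The coefficients in the mixture are then $a_\alpha(\rho) = \tr((A_\alpha \otimes 1)\rho)$. On the trajectory side, the sequence $\tr(A_\alpha \rho_n)$ is a bounded $[0,1]$-valued martingale whose almost-sure limit is the indicator of ultimate absorption of the internal state in $\chi_\alpha$; this yields a convex decomposition $\pp_\rho = \sum_\alpha a_\alpha(\rho)\,\pp_\rho^{(\alpha)}$, where $\pp_\rho^{(\alpha)}$ is the law conditioned on the absorption event.

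Given such a decomposition, I would apply the single-enclosure CLT to each conditional measure $\pp_\rho^{(\alpha)}$: asymptotically the internal state under $\pp_\rho^{(\alpha)}$ localises in $\chi_\alpha$, so the position increments are well approximated by those of the irreducible walk restricted to $\chi_\alpha$, giving $\mathrm{dist}(\pp_{\rho,n}^{(\alpha)}, \mathcal{N}(\sqrt{n}\,m_\alpha, D_\alpha)) \to 0$. Linearity of the distance in its first argument together with the triangle inequality combine these estimates into the full claim. The main obstacle will be controlling the transient regime: before absorption into some $\chi_\alpha$, the walker drifts over a random (but a.s.\ finite) number of steps, producing an $O(1)$ displacement that must be shown negligible after rescaling by $1/\sqrt{n}$. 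A careful choice of metric on probability measures (adapted to the diverging means $\sqrt{n}\,m_\alpha$, e.g.\ a L\'evy--Prokhorov type distance) together with uniform integrability estimates for the absorption time, furnished by the quantum recurrence/transience machinery recalled at the start of Section 5, will close the argument.
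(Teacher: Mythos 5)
Your overall architecture---decompose $\hh$, run a single-enclosure CLT on each piece via deformation and Bryc, then recombine through the absorption-operator martingales and triangle inequality---matches the paper's strategy. But two of the specific claims you lean on are false in general and represent genuine gaps.

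First, you assert that $\L|_{\chi_\alpha}$ is irreducible with a unique invariant state. This fails whenever $\chi_\alpha$ is not itself a minimal enclosure: the $\chi_\alpha$'s are constructed only so that every minimal enclosure is either orthogonal to or contained in $\chi_\alpha$, and a single $\chi_\alpha$ can contain a whole continuum of minimal enclosures (as in Example \ref{example2}, where $\chi_1 = \mathrm{span}\{e_1,e_2\}$). So the deformation and spectral perturbation must be done at the level of the minimal enclosures $\V_{\alpha,\beta}$ inside $\chi_\alpha$, where the dominant eigenvalue $1$ is simple. One then has to prove---and this is a real step, not a formality---that the resulting asymptotic mean $m$ and covariance $D$ do not depend on which minimal enclosure $\V_{\alpha,\beta}\subset\chi_\alpha$ was chosen. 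The paper does this in Lemma \ref{lemma-m-D} by using the unitary equivalence $\L^*_{|\chi_\alpha}\simeq \mathrm{Id}_{B(\cc^{|I_\alpha|})}\otimes\psi$ to build a partial isometry intertwining the restrictions $\L_{|\V}$ and $\L_{|\mathcal{W}}$. Your proposal has no substitute for this lemma, and without it the sum $\sum_\alpha a_\alpha(\rho)\,\mathcal{N}(\sqrt{n}m_\alpha,D_\alpha)$ is not even well-defined.

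Second, you claim the martingale $\tr(A_\alpha\rho_n)$ converges a.s.\ to an indicator (so $\pp_\rho^{(\alpha)}$ is a genuine conditional law). The paper explicitly warns against this: $Y_\infty$ need not be Bernoulli, so $\pp_\rho^{(\alpha)}$ is in general \emph{not} a conditioning of $\pp_\rho$ on any event. The convex decomposition $\pp_\rho = \sum_\alpha a_\alpha(\rho)\,\pp_\rho^{(\alpha)}$ still holds, but it must be established by checking it on cylinder sets via the Radon--Nikodym derivatives $Y_n/\mathbb{E}_\rho[Y_0]$, as in Lemma \ref{lem:dis}, rather than by invoking a partition of $\Omega$. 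Finally, your worry about the transient-regime drift and absorption-time integrability is a detour: the single-enclosure CLT is proved directly under the tilted measure $\pp'_\rho$ by deforming the channel restricted to $\mathrm{supp}(\AV)$ (which already contains the relevant transient directions), so no absorption-time estimate is required, and the translation-invariance of the Fortet--Mourier (or L\'evy--Prokhorov) metric lets the diverging means $\sqrt{n}\,m_\alpha$ be absorbed with no extra work.
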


For results on large deviations, the statement is more complicated and we directly refer the reader to Section \ref{sec:LDP}. Here, we can simply anticipate that we can prove a large deviation principle in the case the local channel is fast recurrent (i.e. there exists at least one faithful invariant state), while, when there is a non trivial transient subspace, we can only find upper and lower bounds through G\"artner-Ellis' theorem. In both cases we can  explicitly write the rate functions using the same ingredients as above.

Finally, we discuss some examples and numerical simulations in Section \ref{sec:examples}.

\section{Preliminaries and context description} \label{sec:intro}

We recall here some basic definitions, notations, and existing results. In the first subsection, we introduce the precise definition of quantum trajectories. In the second one, we recall some general notions about invariant domains (or enclosures) and absorption operators.

A quantum channel is for us a completely positive trace-preserving linear map acting on trace class operators. We already introduced the quantum channel $\M$, acting on $L^1(\H)$, and defining the HOQRW, and the local channel $\L$ acting on the trace class operators $L^1(\hh)$ on the local Hilbert space. 

Notice that the evolution of the system described by a HOQRW depends only on the matrix elements of the state which are diagonal with respect to the position observable, hence we can assume that the initial state $\rho$ of the system is of diagonal form
\[\rho=\sum_{\underline{k} \in V} \rho(\underline{k}) \otimes \ket{\underline{k}}\bra{\underline{k}} \in L^1(\H), \quad 	\rho \geq 0, \quad \tr(\rho)=1.
\]

\subsection{Quantum trajectories}\label{section:QT} 

The stochastic evolution of the system will depend on the initial state $\rho$ and we shall denote by $\pp_\rho$ the associated probability measure. 
Let us first define the probability space.  We denote by $J=\{1,\dots,v\}$ the set of indices for all possible shifts  in $S=\{s_1,\dots,s_v\} $
and we choose the sample set $\Omega=V \times J^\nn$. 
On $V$ and $J$ we consider the $\sigma$-algebras of the power sets, and on $\Omega$ we then consider the $\sigma$-algebra ${\cal F}$ generated by cylindrical sets. If ${\cal F}_0$ is the power set of $V$ and ${\cal F}_n$ is the $\sigma$-algebra generated by the projections on the first $n$ components of $\Omega$ for $n\ge 1$, notice that $({\cal F}_n)_{n\geq 0}$ is a filtration and ${\cal F}=\sigma\left (\bigcup_{n\geq 0}{\cal F}_n \right)$. 

We define a family of compatible finite dimensional laws which univoquely determines a measure $\pp_\rho$ on $(\Omega, {\cal F})$ by Kolmogorov extension theorem: for all $\underline{k} \in V$, $n \geq 1$, $\underline{j}=(j_1,\dots, j_n) \in J^n$,
\begin{eqnarray*}
\pp_\rho(\{\underline{k}\} \times J^\nn)&=&\tr(\rho(\underline{k})),
\\
\pp_\rho(\{(\underline{k}, \underline{j})\}\times J^\nn)&=&\tr(L_{j_n} \cdots L_{j_1}\rho L_{j_1}^* \cdots L_{j_n}^*).
\end{eqnarray*}

The quantum trajectory is the process $(X_n,\rho_n)_{n\geq 0}$ defined, for $\omega=(\underline{k},j_1,\dots)$, as
\[X_0(\omega)=\underline{k}, \quad \rho_0(\omega)=\frac{\rho(\underline{k})}{\tr(\rho(\underline{k}))},
\]
\[X_{n+1}(\omega)=X_n+s_{j_{n+1}}, \quad \rho_{n+1}(\omega)=\frac{L_{j_{n+1}}\rho_nL_{j_{n+1}}^*}{\tr(L_{j_{n+1}}\rho_nL_{j_{n+1}}^*)}
\qquad \forall n\geq 0.\]
$(X_n,\rho_n)_{n\geq 0}$ is a Markov process on the filtered probability space $(\Omega, {\cal F},{\cal F}_{n\geq 0},\pp_\rho)$, 
with initial law given by 
\[
\pp_\rho \left\{(X_0,\rho_0)=\left (\underline{k},\frac{\rho(\underline{k})}{\tr(\rho(\underline{k}))} \right)\right\}
=\tr(\rho(\underline{k})), 
\quad \underline{k} \in V
\]
and transition probabilities
\[
\pp_\rho \left (X_{n+1}=X_n+s_j,\rho_{n+1}=\frac{ L_j\rho_n L_j^*}{\tr( L_j \rho_n L_j^*)}\bigg\rvert X_n,\rho_n\right )=\tr(L_j\rho_nL_j^*), \quad j \in J, \quad n\geq 1.
\]
Notice that $({\cal F}_n)_n$ coincides with the natural filtration of $(X_n,\rho_n)_{n\geq 0}$. 

In order to fix some ideas about the definition of OQRW and on the behavior of the related position process, we introduce two simple examples, both on the lattice $V=\zz$, for which we provide the simulated trajectories of the rescaled position process in the next figures. In this case ($V=\zz$), the HOQRW has two possible movements at each time step, i.e. v=2, and the walk is completely determined once we fix the two Kraus operators $L_1,L_2$ describing the action on the internal state when moving to the right or to the left. For convenience, we shall call them $R$ and $L$ respectively.

\begin{example}\label{example1}
Let us consider a HOQRW on $V=\zz$ with local space $\hh=\cc^2$ (we denote by $\{e_0,e_1\}$ the canonical basis) and the following local operators:
\[L=\begin{pmatrix} \sqrt{\frac{1}{2}} & 0\\[6pt] 
 -\frac{\sqrt{2}}{3} &\sqrt{\frac{1}{3}}\\[6pt]\end{pmatrix}, \quad
R=\begin{pmatrix} \sqrt{\frac{1}{6}}& 0 \\[6pt]
\frac{1}{3} & \sqrt{\frac{2}{3}} \\[6pt]
\end{pmatrix}
\]
corresponding to going to the left and the right respectively. In this case the local map $\L(\cdot)=L \cdot L^* + R \cdot R^*$ admits a unique invariant state $\tau_0=\ketbra{e_1}{e_1}$. For every initial state $\rho$, simulations show that, for increasing values of $n$, the law of $\frac{X_n-X_0}{\sqrt{n}}$ becomes approximately Gaussian, with fixed variance, and mean growing as $\sqrt n$ (see Figure \ref{fig:ex1}). 
\begin{figure}[h]
     \centering
     \begin{subfigure}[b]{0.8\textwidth}
         \centering
         \includegraphics[width=\textwidth]{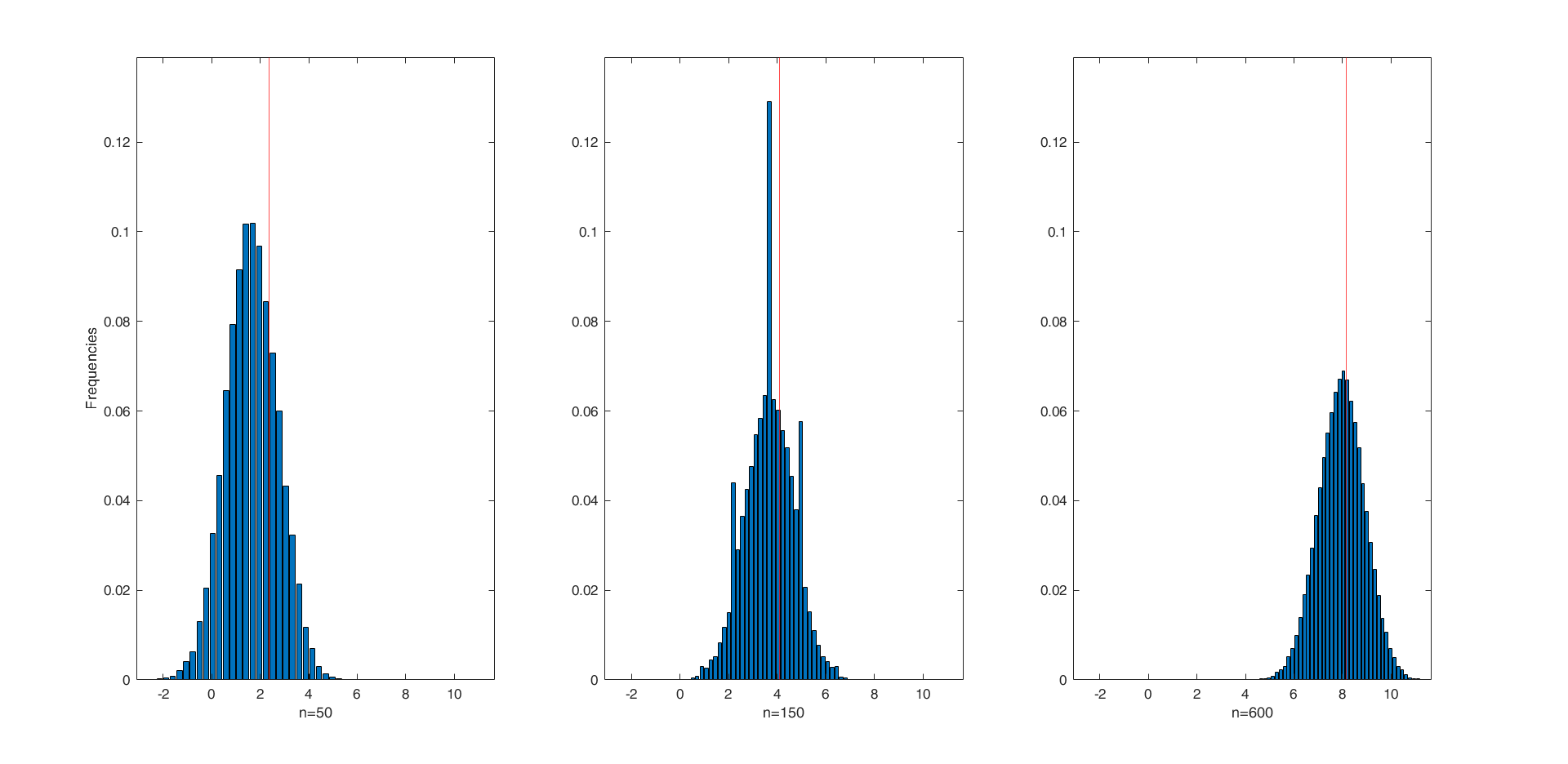}
         \caption{$\rho=\tau_0 \otimes \ketbra{0}{0}$}
         \label{fig:ex1rec}
     \end{subfigure}
     \hfill
     \begin{subfigure}[b]{0.8\textwidth}
         \centering
         \includegraphics[width=\textwidth]{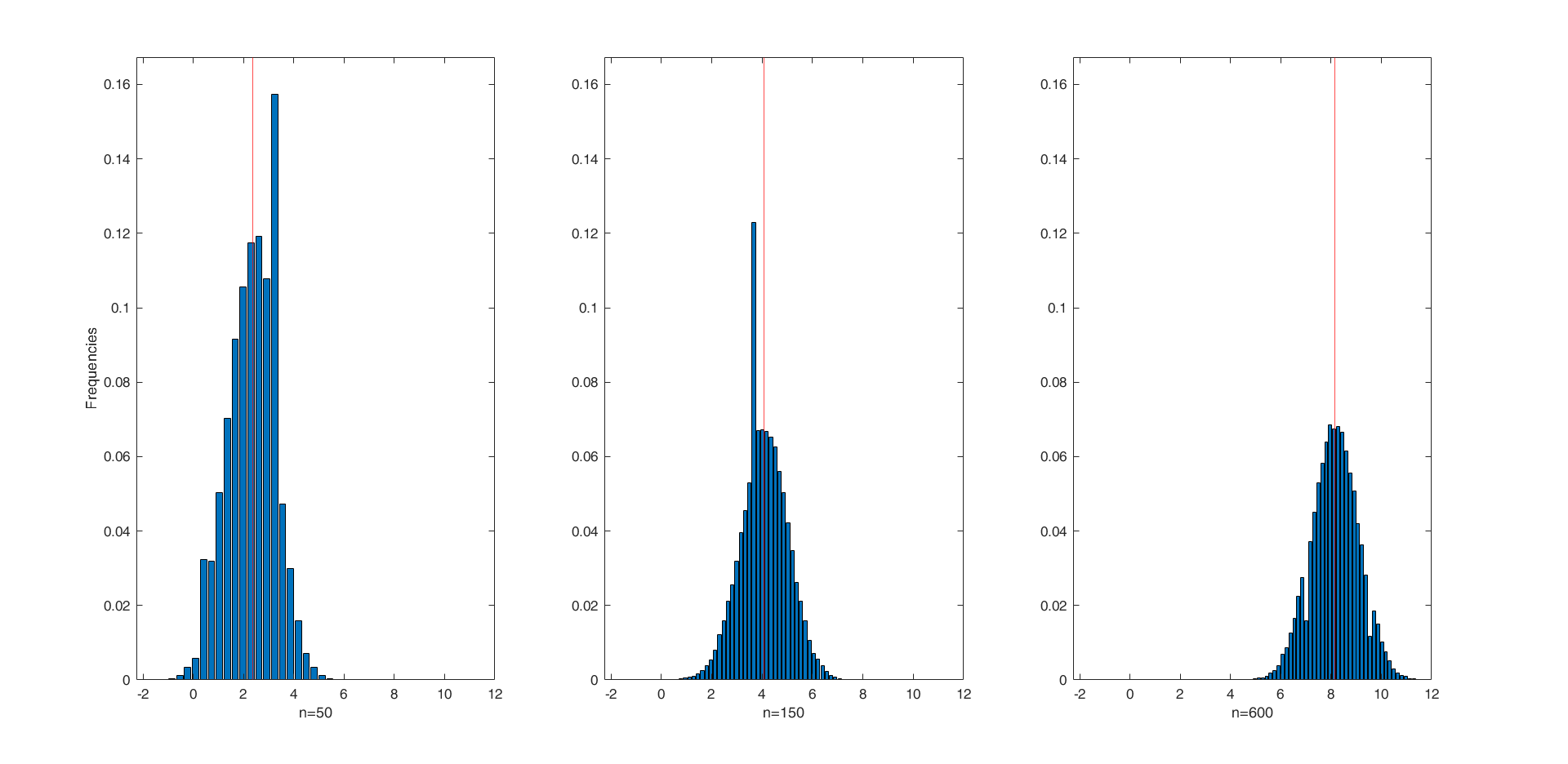}
         \caption{$\rho=\ketbra{e_0}{e_0} \otimes \ketbra{0}{0}$}
          \label{fig:ex1tr}
     \end{subfigure}
      \caption{Panels (a)-(b) show the apperance of the same Gaussian distribution for two different initial states in the model introduced in Example \ref{example1}. We used $N=5 \times 10^{4}$ samples of $\frac{X_{n}}{\sqrt{n}}$ for $n=50,150,600$ in order to draw its profile. The vertical red line corresponds to the mean value of the Gaussian.}\label{1mistura}
\end{figure}  \label{fig:ex1}
\end{example}

\begin{example}\label{example2}
Consider now always $V=\zz$, but local space $\hh=\cc^4$ and local Kraus operators
\[L=\begin{pmatrix}
\frac{1}{2\sqrt{2}}& 0&0&0\\[6pt]
0& \frac{1}{\sqrt{2}}&0&0\\[6pt]
0& 0& \frac{1}{\sqrt{2}}&0\\[6pt]  
-\sqrt{\frac{1}{6}}&0&0&\frac{\sqrt{2}}{3}\\
\end{pmatrix}, \quad
R=\begin{pmatrix}
\sqrt{\frac{3}{8}}& 0&0&0\\[6pt]
0&  \frac{1}{\sqrt{2}}&0&0\\[6pt]
0&0&\frac{1}{\sqrt{2}}&0\\[6pt] 
\frac{1}{\sqrt{3}}& 0&0&\frac{1}{\sqrt{3}}\\[6pt] 
 \end{pmatrix}.
\]
The invariant states of the local map are of the following form: $x\sigma+(1-x)\ketbra{e_3}{e_3}$ where $\sigma$ is any state supported in ${\rm span}\{e_1,e_2\}$ and $x \in [0,1]$. In this case simulations show that, as $n$ increases, the law of $\frac{X_n-X_0}{\sqrt{n}}$ can approach either a Gaussian or the mixture of two Gaussians, whose parameters will be easy to compute using the results of next sections (${\cal N}(0,1)$ and $ {\cal N}(-\sqrt{n}/3,8/9)$). 
Figure \ref{fig:ex2} shows that the  profile of such a mixture and the weights  $a_{\alpha}$ in the previous theorem strongly depend on the initial state.

We shall recover this example in the last section, considering a slightly more general family of Kraus operators.

\begin{figure}[!htbp] 
     \centering
     \begin{subfigure}[b]{0.7\textwidth}
         \centering
         \includegraphics[width=\textwidth]{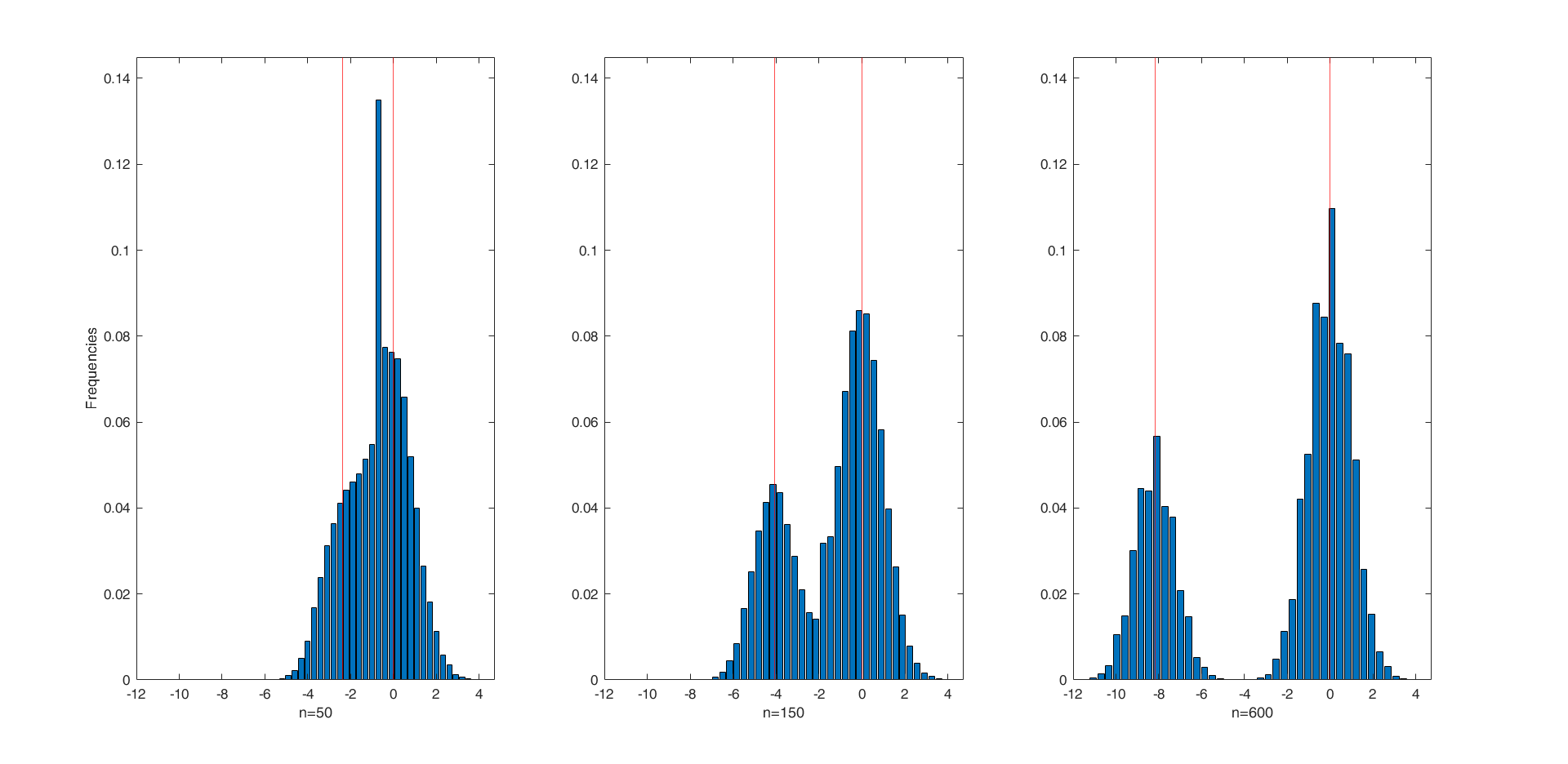}
         \caption{$\rho=\frac{1}{3} (\ketbra{e_1}{e_1}+\ketbra{e_2}{e_2}+\ketbra{e_3}{e_3}) \otimes \ketbra{0}{0}$}
         \label{fig:02}
     \end{subfigure}
     \hfill
     \begin{subfigure}[b]{0.7\textwidth}
         \centering
         \includegraphics[width=\textwidth]{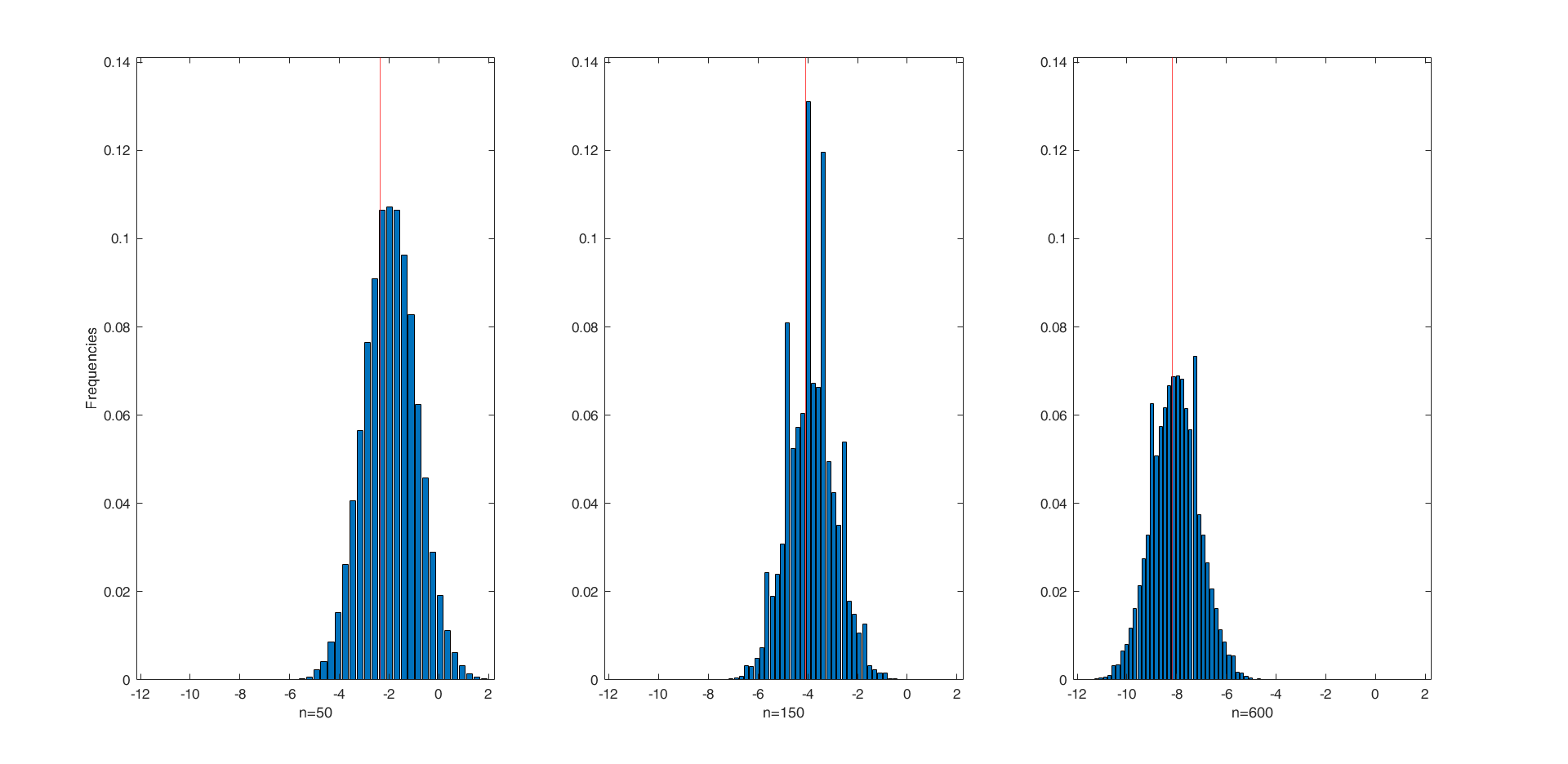}
         \caption{$\rho=\ketbra{e_0}{e_0} \otimes \ketbra{0}{0}$}
          \label{fig:03}
     \end{subfigure}
     \caption{Panel (a) shows a mixture of two Gaussian distributions for a particular choice of the initial state, while panel (b) shows a single Gaussian for another initial state for the model considered in Example \ref{example2}.  We used $N=5\times 10^{4}$ samples of $\frac{X_{n}}{\sqrt{n}}$ for $n=50,150,600$ in order to draw their profile. The vertical red lines correspond to the mean values of the Gaussians.}\label{1mistura}
\end{figure} \label{fig:ex2}
\end{example}

\subsection{Enclosures and absorption}\label{oqw0}
Let $\h$ be a finite dimensional Hilbert space and $\Phi$ be a quantum channel on $L^1(\h)$, the set of trace-class operators on $\h$. Since the topological dual of $L^1(\h)$ is isometrically isomorphic to $B(\h)$, we can define the dual map $\Phi^*:B(\h) \rightarrow B(\h)$ as the operator verifying
\[\tr(\omega\Phi^*(x))=\tr(\Phi(\omega)x) \quad x \in B(\h), \omega \in L^1(\h).
\]
$\Phi^*$ is a completely positive unital (i.e. $\Phi^*(1_\hh)=1_\hh$) bounded operator. 
Given any positive operator $x \in B(\h)$, we define its support projection as the orthogonal projection onto ${\rm supp}(x):=\ker(x)^\perp$.

\begin{definition}
1. A subspace $\V \subset \h$ is said to be an enclosure (or invariant domain) for $\Phi$ if for every positive $\omega \in L^1(\h)$
\[{\rm supp}(\omega) \subset \V \text{ implies } {\rm supp}(\Phi(\omega)) \subset \V.\]
An enclosure will be called minimal if it does not contain other non trivial enclosures. 
\\
2. A quantum channel is called irreducible if the only enclosures are the trivial ones, i.e. $\{0\}$ and $\h$.
\end{definition}
A finite dimensional minimal enclosure is always the support of a unique invariant state of the channel $\Phi$, i.e. a positive trace one operator $\tau$ such that $\Phi(\tau)=\tau$. 

Denoting by $p_\V$ the orthogonal projection onto $\V$, we have the following equivalent characterizations of the notion of enclosure.

\begin{proposition}[see \cite{PC} and references therein]
Let $\Phi$ be a quantum channel acting on $L^1(\h)$. The following are equivalent:
\begin{enumerate}
\item $\V$ is an enclosure;
\item $p_\V$ is a subharmonic projection, i.e. $\Phi^*(p_\V)\geq p_\V$;
\item if $\Phi$ has a representation with Kraus operators $(V_i)_{i\in I}$, 
i.e. $\Phi(\cdot)=\sum_{i\in I} V_i (\cdot) V_i^*$,
then $V_ip_\V=p_\V V_ip_\V$ for every $i \in I$.
\end{enumerate}
\end{proposition}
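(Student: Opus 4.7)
The plan is to prove the three conditions equivalent by running the cycle $(1) \Rightarrow (2) \Rightarrow (3) \Rightarrow (1)$, exploiting the duality between $\Phi$ and $\Phi^*$ together with trace-preservation of $\Phi$ and unitality of $\Phi^*$.

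For $(1) \Rightarrow (2)$, I would test $\Phi^*(p_\V)$ against any state $\sigma$ supported in $\V$. By duality, $\tr(\sigma \Phi^*(p_\V)) = \tr(\Phi(\sigma) p_\V)$; since $\V$ is an enclosure, $\Phi(\sigma)$ is still supported in $\V$, so the right-hand side equals $\tr(\Phi(\sigma)) = 1$. Taking $\sigma = \ketbra{\psi}{\psi}$ for an arbitrary unit vector $\psi \in \V$ yields $\langle \psi, \Phi^*(p_\V) \psi \rangle = 1$. Because $\Phi^*$ is completely positive and unital, $0 \leq \Phi^*(p_\V) \leq 1_\hh$, and a positive contraction whose expectation on a unit vector equals one must fix that vector (by Cauchy--Schwarz applied to $\Phi^*(p_\V)^{1/2}\psi$). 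Hence $\Phi^*(p_\V)\psi = \psi$ for every $\psi \in \V$, so $\Phi^*(p_\V)\,p_\V = p_\V$, and together with positivity this upgrades to $\Phi^*(p_\V) \geq p_\V$.

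For $(2) \Rightarrow (3)$, I would sandwich the inequality by $p_\V$: combining $\Phi^*(p_\V) \geq p_\V$ with the trivial bound $\Phi^*(p_\V) \leq \Phi^*(1_\hh) = 1_\hh$ forces $p_\V \Phi^*(p_\V) p_\V = p_\V$. Writing $\Phi^*(x) = \sum_i V_i^* x V_i$ and using $\sum_i V_i^* V_i = 1_\hh$, this identity rewrites as
\[
\sum_{i\in I} (p_\V^\perp V_i p_\V)^*(p_\V^\perp V_i p_\V) \;=\; \sum_{i \in I} p_\V V_i^*(1_\hh - p_\V) V_i p_\V \;=\; 0,
\]
and a vanishing sum of positive operators has each summand zero, giving $p_\V^\perp V_i p_\V = 0$, i.e.\ $V_i p_\V = p_\V V_i p_\V$ for every $i$.

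Finally, $(3) \Rightarrow (1)$ is a direct check: if $\omega \geq 0$ has support in $\V$ then $\omega = p_\V \omega p_\V$, so
\[
\Phi(\omega) \;=\; \sum_{i\in I} V_i p_\V \omega p_\V V_i^* \;=\; \sum_{i\in I} p_\V V_i \omega V_i^* p_\V \;=\; p_\V \Phi(\omega) p_\V,
\]
which means $\Phi(\omega)$ is still supported in $\V$. The only genuinely non-mechanical step is the rigidity argument in $(1) \Rightarrow (2)$, namely that a positive contraction attaining expectation one on a unit vector must fix it; the rest of the cycle is routine sandwich-and-Kraus algebra and does not depend on the finite-dimensionality of $\hh$.
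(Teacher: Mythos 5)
Your cyclic argument is correct, and the paper itself does not supply a proof of this proposition (it defers to \cite{PC} and the references therein), so there is no in-paper proof to compare against. Each implication checks out: the rigidity lemma you invoke in $(1)\Rightarrow(2)$ (a positive operator $T\le 1_\hh$ with $\langle\psi,T\psi\rangle=1$ for a unit $\psi$ satisfies $(1_\hh-T)^{1/2}\psi=0$, hence $T\psi=\psi$) is sound, and combining $\Phi^*(p_\V)p_\V=p_\V$ with its adjoint and positivity does yield the block form $\Phi^*(p_\V)=p_\V+p_{\V^\perp}\Phi^*(p_\V)p_{\V^\perp}\ge p_\V$; the sandwich in $(2)\Rightarrow(3)$ correctly uses $\Phi^*(1_\hh)=1_\hh$ to turn the equality $p_\V\Phi^*(p_\V)p_\V=p_\V$ into a vanishing sum of squares; and $(3)\Rightarrow(1)$ is the expected direct computation. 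This is the standard route for this characterization and matches what one finds in the cited literature.
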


If we consider the restriction of the quantum channel to $p_\V L^1(\h)p_\V\simeq L^1(\V)$, we obtain again a quantum channel, that we shall denote $\Phi_{|\V}$ with an abuse of notation,
\[ \begin{split}
\Phi_{|\V}:  p_\V L^1(\h)p_\V &\rightarrow p_\V L^1(\h)p_\V \\
  p_\V \sigma p_\V & \mapsto p_\V\Phi( p_\V \sigma p_\V)p_\V=\Phi( p_\V \sigma p_\V); \\
\end{split}
\]
its dual map $\Phi^*_{|\V}$ acts on $p_\V B(\h)p_\V \simeq B(\V)$ and for every $x \in B(\V)$
\[\begin{split}
 \Phi^*_{|\V} ( p_\V x p_\V) = p_\V \Phi^*(p_\V x p_\V)p_\V=p_\V \Phi^*( x )p_\V.
\end{split}
\]

The channel $\Phi_{|\V}$ restricted to a minimal enclosure is trivially irreducible by construction. 

Given an enclosure $\V$, we can define the associated absorption operator (see \cite{CG}) as
\begin{equation} \label{eq:absdef}
\AV:=\lim_{n \rightarrow +\infty} \Phi^{*n}(p_\V).
\end{equation}
Absorption operators enjoy remarkable properties that we recall below.
\begin{proposition}[{\cite[Proposition 4]{CG}}] \label{prop:abs}
The following statements hold true:
\begin{enumerate}
\item $0 \leq \AV \leq 1$;
\item $\AV$ is harmonic, that is $\Phi^*(\AV)=\AV$;
\item $\ker(\AV)$ is an enclosure;
\item $A(\V)=p_\V + p_{\V^\perp} A(\V) p_{\V^\perp}$.
\end{enumerate}
\end{proposition}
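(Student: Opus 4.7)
The plan is first to show that the sequence defining $A(\V)$ actually converges, and then to derive the four claims in order from short operator-theoretic manipulations. The main observation is that subharmonicity of $p_\V$ (which is equivalent to $\V$ being an enclosure) together with the positivity and unitality of $\Phi^*$ forces the sequence $(\Phi^{*n}(p_\V))_{n\geq 0}$ to be monotone nondecreasing and bounded above by $1_\hh$. In the finite-dimensional space $B(\hh)$ this guarantees norm convergence, and the bounds $0 \leq p_\V \leq \Phi^{*n}(p_\V) \leq 1_\hh$ pass to the limit, proving (1) and yielding the crucial two-sided sandwich $p_\V \leq A(\V) \leq 1_\hh$. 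Assertion (2) then follows at once by applying $\Phi^*$ to both sides of the defining limit and using continuity in the norm topology.

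For (4), I would exploit the sandwich by compressing with $p_\V$: one has $p_\V = p_\V p_\V p_\V \leq p_\V A(\V) p_\V \leq p_\V 1_\hh p_\V = p_\V$, hence $p_\V A(\V) p_\V = p_\V$. Setting $B = A(\V) - p_\V \geq 0$, the identity $p_\V B p_\V = 0$ reads $\|\sqrt{B}\, p_\V\|^2 = 0$, so $\sqrt{B}\, p_\V = 0$ and therefore $B p_\V = p_\V B = 0$. Expanding $A(\V) = (p_\V + p_{\V^\perp})\, A(\V)\, (p_\V + p_{\V^\perp})$ then immediately yields the decomposition in (4).

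For (3), I would use the harmonicity just established. If $\xi \in \ker(A(\V))$, then
\[
\tr\bigl(\Phi(\ketbra{\xi}{\xi})\,A(\V)\bigr) = \tr\bigl(\ketbra{\xi}{\xi}\,\Phi^*(A(\V))\bigr) = \langle \xi, A(\V)\xi\rangle = 0,
\]
and since both $A(\V)$ and $\Phi(\ketbra{\xi}{\xi})$ are positive, this forces $\Phi(\ketbra{\xi}{\xi})$ to have support in $\ker(A(\V))$. A positive operator $\omega$ supported in $\ker(A(\V))$ decomposes as a positive combination of pure states $\ketbra{\xi}{\xi}$ with $\xi \in \ker(A(\V))$, so by linearity $\Phi(\omega)$ is again supported in $\ker(A(\V))$, which is exactly the definition of an enclosure.

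I do not expect any serious obstacle here: the genuine subtlety is justifying convergence of the defining limit, which in infinite dimensions would require a more delicate monotone-net argument in the $\sigma$-weak topology, but in the present finite-dimensional setting it is immediate from the monotone bounded sequence property in $B(\hh)$. Everything else is bookkeeping with positivity and the block structure induced by the projection $p_\V$.
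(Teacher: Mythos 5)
Your proof is correct. Note that for this proposition the paper gives no argument at all---it simply cites \cite[Proposition 4]{CG}---so there is no in-text proof to compare against; what you have produced is a self-contained derivation that the authors delegated to the reference. Your chain of reasoning is sound throughout: subharmonicity of $p_\V$ plus positivity and unitality of $\Phi^*$ does give monotone nondecreasing $\Phi^{*n}(p_\V)$ bounded by $1_\hh$, which in finite dimension converges in norm (all operator topologies coincide there), yielding (1), and (2) follows by norm continuity of $\Phi^*$. The compression argument for (4) is clean: the sandwich $p_\V \le A(\V) \le 1_\hh$ forces $p_\V A(\V) p_\V = p_\V$, and the positivity trick $p_\V B p_\V = (\sqrt{B}p_\V)^*(\sqrt{B}p_\V) = 0 \Rightarrow \sqrt{B}p_\V = 0 \Rightarrow Bp_\V = p_\V B = 0$ correctly kills the off-diagonal blocks. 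For (3), the trace identity combined with the fact that $\tr(PQ)=0$ for positive $P,Q$ forces $\mathrm{supp}(P)\subset\ker(Q)$, and passing from rank-one to general positive $\omega$ via spectral decomposition is exactly the right move. One small stylistic remark: you establish (4) before (3), whereas the statement lists them in the opposite order; that ordering is perfectly legitimate since (3) uses only (2), but you might mention explicitly that (3) and (4) are logically independent of each other to avoid any appearance of circularity.
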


Some additional discussion about the general structure of enclosures for quantum channels and possible decompositions of $\h$ in orthogonal enclosures will be developed in Section \ref{sec:mixgauss}.

In the same way as for positive matrices, there exists a Perron-Frobenius theorem for quantum channels. We denote the spectral radius of a map $\Phi$ as
$r(\Phi):=\sup\{\vert\lambda \vert:\lambda\in \rm{Sp}( \Phi)\}$,
where $\rm{Sp}(\Phi)$ is the spectrum of $\Phi$.

\begin{theorem}[{\cite[Theorems 6.4 and 6.5]{Wo}}]\label{pf0}
Let $\Phi$ be a positive map acting on $L^1(\h)$; then $r(\Phi)$ is a eigenvalue and the corresponding eigenvector is positive. If in addition $\Phi$ is irreducible, then $r(\Phi)$ is geometrically simple and the corresponding eigenvector is strictly positive.
\end{theorem}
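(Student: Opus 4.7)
The plan is to leverage the finite dimensionality of $L^1(\hh)$ and reduce to the classical Perron--Frobenius / Krein--Rutman theorem for linear maps preserving a pointed convex cone. The cone $L^1(\hh)_+$ of positive trace-class operators is closed, convex, has non-empty interior inside the real subspace of self-adjoint operators, and is mapped into itself by $\Phi$. A concrete route to existence is via the resolvent: for $\lambda>r(\Phi)$ the operator $R(\lambda,\Phi)=\sum_{k\ge 0}\Phi^k/\lambda^{k+1}$ is positivity preserving, and its norm must blow up as $\lambda\downarrow r(\Phi)$ (otherwise the resolvent would extend analytically past $r(\Phi)$, contradicting $r(\Phi)\in\mathrm{Sp}(\Phi)$). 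Choosing a positive $y$ along which $\|R(\lambda_n,\Phi)y\|\to\infty$ for some $\lambda_n\downarrow r(\Phi)$, the normalized sequence $R(\lambda_n,\Phi)y/\|R(\lambda_n,\Phi)y\|$ lives in the compact unit sphere of the cone and has a cluster point $\omega\ge 0$ with $\|\omega\|=1$; applying $(\lambda_n-\Phi)$ to the unnormalized resolvents and passing to the limit yields $\Phi(\omega)=r(\Phi)\omega$.

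For the irreducible case, first note that $r(\Phi)>0$ (a nonzero irreducible positive map on a finite dimensional space cannot be nilpotent) and pick a positive eigenvector $\omega\ge 0$ with $\Phi(\omega)=r(\Phi)\omega$. Set $\V=\mathrm{supp}(\omega)$. For every $\sigma\ge 0$ with $\mathrm{supp}(\sigma)\subset\V$ there exists $c>0$ with $\sigma\le c\,\omega$ (because $\omega$ restricted to $\V$ is invertible); positivity of $\Phi$ then gives $\Phi(\sigma)\le c\,r(\Phi)\,\omega$, and hence $\mathrm{supp}(\Phi(\sigma))\subset\V$. Thus $\V$ is an enclosure, so irreducibility forces $\V=\hh$ and $\omega$ is strictly positive.

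For geometric simplicity, let $\sigma$ be any eigenvector of $\Phi$ at $r(\Phi)$. Writing $\sigma=\sigma_r+i\sigma_i$ with $\sigma_r,\sigma_i$ self-adjoint and using $\Phi(\sigma^*)=\Phi(\sigma)^*=r(\Phi)\sigma^*$, both $\sigma_r$ and $\sigma_i$ are self-adjoint eigenvectors at $r(\Phi)$, so it suffices to show that every self-adjoint eigenvector is a real multiple of $\omega$. Supposing $\sigma_r\neq 0$, consider the pencil $f(t)=\omega+t\sigma_r$ for $t\in\rr$: strict positivity of $\omega$ implies $f(t)\ge 0$ for $|t|$ small, and the set $\{t\in\rr:f(t)\ge 0\}$ is a bounded interval containing $0$ (unboundedness would force $\sigma_r=0$ by dividing by $|t|$ and letting $t\to\pm\infty$). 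Let $t_+>0$ be its right endpoint; then $f(t_+)$ is a positive eigenvector of $\Phi$ at $r(\Phi)$ with a non-trivial kernel. By the previous paragraph $f(t_+)$ must be zero or strictly positive, and strict positivity is excluded; hence $f(t_+)=0$, i.e.\ $\sigma_r=-\omega/t_+\in\rr\,\omega$, and analogously $\sigma_i\in\rr\,\omega$, giving $\sigma\in\cc\,\omega$.

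The main obstacle is the clean execution of the existence step: verifying that the resolvent blows up at the boundary of the spectrum and that the limit of the normalized sequence remains in the cone and is nonzero. Both facts are standard Krein--Rutman fare; the rest of the argument is purely structural, using only positivity of $\Phi$ and the identification of enclosures with supports of positive invariant operators.
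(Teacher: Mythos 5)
This statement is quoted from Wolf's lecture notes and the paper supplies no proof of it, so there is no internal argument to compare yours against; I assess the proposal on its own terms. The irreducibility half is the standard Krein--Rutman-style argument and is essentially correct: the support of a positive eigenvector at $r(\Phi)$ is an enclosure (via $\sigma\le c\omega \Rightarrow \Phi(\sigma)\le c\,r(\Phi)\omega$), hence full, and the pencil $\omega+t\sigma_r$ reaches a finite boundary point at which the (still positive, still eigen-) vector develops a kernel and must therefore vanish. Two small points should be tightened: (i) that an irreducible positive map on a finite-dimensional space cannot be nilpotent deserves a line (if $\Phi(\omega)=0$ with $\omega\ge0$ non-zero, then $\mathrm{supp}(\omega)$ is an enclosure by the same comparison argument, so $\omega>0$, and then $\Phi$ annihilates the whole cone, forcing $\Phi=0$); (ii) ``unboundedness forces $\sigma_r=0$'' follows only from two-sided unboundedness --- if $\{t:f(t)\ge0\}$ is unbounded above you merely get $\sigma_r\ge0$, hence by the enclosure argument $\sigma_r>0$, and then the interval is bounded below and you should run the boundary argument at $t_-$ instead.

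The genuine gap is in the existence step, and it sits exactly where positivity must enter. You justify the divergence of $\|R(\lambda,\Phi)\|$ as $\lambda\downarrow r(\Phi)$ by appealing to $r(\Phi)\in\mathrm{Sp}(\Phi)$, but this is circular: that $r(\Phi)$ is itself an eigenvalue is precisely the non-trivial content to be proved, and it fails for non-positive maps (take $\Phi=\mathrm{diag}(i,-i)$: $r(\Phi)=1$, $1\notin\mathrm{Sp}(\Phi)$, and the resolvent stays bounded as real $\lambda\downarrow1$). What is needed is a Pringsheim-type argument exploiting positivity of the coefficients $\Phi^k$: for $y\ge0$ and $|z|>r(\Phi)$, since $\|\Phi^k(y)\|_1=\tr\,\Phi^k(y)$, one gets $\|R(z,\Phi)y\|_1\le\sum_k\tr\,\Phi^k(y)/|z|^{k+1}=\|R(|z|,\Phi)y\|_1$; combined with the fact that the norm of a positive map on $L^1(\hh)$ is controlled, up to a dimensional constant, by its restriction to the positive cone, boundedness of $R(\lambda,\Phi)$ along the real ray $\lambda\downarrow r(\Phi)$ would give boundedness of $R(z,\Phi)$ uniformly as $|z|\downarrow r(\Phi)$, contradicting the existence of a singularity on the circle of convergence of the Neumann series. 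This is the heart of the theorem and cannot be absorbed into ``standard Krein--Rutman fare''; as written, the parenthetical justification of the blow-up does not stand on its own.
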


In case $\Phi$ is quantum channel $1=r(\Phi)=\lVert \Phi \rVert_\infty$.

\section{Selecting a single Gaussian} \label{sec:singlegauss}

In this section, we shall concentrate on a single minimal enclosure $\V$ of the local channel $\L$ and we shall introduce a proper associated probability measure $\pp'_\rho$, which is absolutely continuous with respect to $\pp_\rho$, with a relative density which assigns weights according to the absorption in $\V$. We shall prove that the position process $(X_n)_n$ always satisfies a central limit theorem under this measure (Theorem \ref{th:singleCTL}). Previous CLT results can be seen as a consequence of the case of a single enclosure (see Remark \ref{rmk:previous} below). 

According to the notations introduced in the previous section, we shall call $\AV=\lim_{n \rightarrow +\infty} \L^{*n}(p_\V)$ the absorption operator of the enclosure $\V$ for $\L $ and we denote by ${\tilde p}_\V$ the support projection  of $\AV$.

\begin{lemma} \label{lem:supermartingale}
Let $(Y_n)_{n\ge 0}$ be the process defined by $Y_n=\tr((\AV\rho_n))$ for any $n\ge 0$.
\begin{enumerate}
\item Then $(Y_n)_{n\ge 0}$ is a positive and bounded $\pp_\rho$-martingale, converging (almost surely and $L^1$) to a random variable $Y_\infty$ valued in $[0,1]$. 
\item
If $\mathbb{E}_{\rho}[Y_0]=\mathbb{E}_\rho[\tr(\AV\rho_0)]>0$, 
we can define a new probability measure $\pp'_\rho$ such that
\[
\frac{d\pp'_\rho}{d\pp_\rho}=\frac{Y_\infty }{\mathbb{E}_\rho[Y_0]},
\qquad
\frac{d\pp'_\rho}{d\pp_\rho} \bigg\rvert_{{\cal F}_n}=\frac{Y_n }{\mathbb{E}_\rho[Y_0]}.
\]
Moreover the density $\frac{d\pp'_\rho}{d\pp_\rho}$ is valued in $[0,{\mathbb{E}_\rho[Y_0]}^{-1}]$ and
\begin{equation}
 \begin{split}
 \label{eq:absdensity}
&\left \{\frac{d\pp'_\rho}{d\pp_\rho}=\frac{1}{\mathbb{E}_\rho[Y_0]} \right \}=\left \{\lim_{n\rightarrow + \infty} \lVert p_\V \rho_n p_\V-\rho_n \rVert=0 \right\}, \\
& \left \{\frac{d\pp'_\rho}{d\pp_\rho}=0 \right \}=\left \{\lim_{n\rightarrow + \infty} \lVert \tilde{p}^\perp_\V \rho_n \tilde{p}^\perp_\V-\rho_n \rVert=0 \right\}.\\
\end{split}
\end{equation}
\end{enumerate}
\end{lemma}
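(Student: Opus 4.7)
The proof splits into the two parts of the lemma. For part 1, I would verify that $(Y_n)_{n\ge 0}$ is a positive bounded $\pp_\rho$-martingale with respect to $(\mathcal{F}_n)_n$. Positivity and the bound $Y_n\le 1$ come directly from $0\le \AV\le 1_\hh$ (item 1 of Proposition~\ref{prop:abs}). The martingale identity reduces, via the transition kernel of the quantum trajectory, to
\begin{equation*}
\EE_\rho[Y_{n+1}\mid \mathcal{F}_n]=\sum_{j\in J}\tr(L_j\rho_n L_j^*)\,\tr\!\left(\AV\tfrac{L_j\rho_n L_j^*}{\tr(L_j\rho_n L_j^*)}\right)=\tr(\L^*(\AV)\rho_n)=\tr(\AV\rho_n)=Y_n,
\end{equation*}
the last step being the harmonicity $\L^*(\AV)=\AV$ (item 2 of Proposition~\ref{prop:abs}). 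Doob's theorem then delivers the a.s.\ and $L^1$ limit $Y_\infty\in[0,1]$.

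For part 2, the martingale property gives $\EE_\rho[Y_\infty]=\EE_\rho[Y_0]$, so, under the assumption $\EE_\rho[Y_0]>0$, the variable $Y_\infty/\EE_\rho[Y_0]$ is a bona fide $\pp_\rho$-probability density. The restriction formula on $\mathcal{F}_n$ is the tower rule $\EE_\rho[Y_\infty\mid\mathcal{F}_n]=Y_n$, and the pointwise upper bound on $d\pp'_\rho/d\pp_\rho$ is immediate from $Y_\infty\le 1$.

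The heart of the statement is the characterization of the extreme level sets in \eqref{eq:absdensity}. For $\{Y_\infty=0\}$ I would use that, in finite dimension, $\AV$ restricted to its support $\tilde p_\V\hh$ is strictly positive, so $\AV\ge c\,\tilde p_\V$ for some $c>0$; hence $\tr(\tilde p_\V\rho_n)\le c^{-1}Y_n\to 0$, and a standard Cauchy--Schwarz estimate in the Schatten $2$-norm upgrades this to $\|\tilde p_\V^\perp\rho_n\tilde p_\V^\perp-\rho_n\|\to 0$ by controlling both diagonal and off--diagonal blocks. For $\{Y_\infty=1\}$ I would introduce the auxiliary process $Z_n:=\tr(p_\V\rho_n)$: this is a submartingale because $\V$ being an enclosure is equivalent to $p_\V$ being subharmonic, and item 4 of Proposition~\ref{prop:abs} ($\AV=p_\V+p_{\V^\perp}\AV p_{\V^\perp}$) gives $Y_n\ge Z_n$, so $Y_\infty\ge Z_\infty$ $\pp_\rho$-a.s. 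Taking expectations and using $\EE_\rho[\rho_n]=\L^n\!\left(\sum_{\underline k}\rho(\underline k)\right)$ together with the monotone convergence $\L^{*n}(p_\V)\nearrow\AV$ from definition \eqref{eq:absdef},
\begin{equation*}
\EE_\rho[Z_n]=\tr\!\left(\L^{*n}(p_\V)\sum_{\underline k}\rho(\underline k)\right)\xrightarrow[n\to\infty]{}\tr\!\left(\AV\sum_{\underline k}\rho(\underline k)\right)=\EE_\rho[Y_0]=\EE_\rho[Y_\infty],
\end{equation*}
and bounded convergence for $Z_n\to Z_\infty$ gives $\EE_\rho[Z_\infty]=\EE_\rho[Y_\infty]$; combined with $Y_\infty\ge Z_\infty$, this forces $Y_\infty=Z_\infty$ almost surely. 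On $\{Y_\infty=1\}$ one therefore has $\tr(p_\V\rho_n)\to 1$, and the same Schatten estimate yields $\|p_\V\rho_n p_\V-\rho_n\|\to 0$.

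The delicate step I anticipate is precisely this last upgrade from the pointwise inequality $Y_n\ge Z_n$ to equality of the two limits almost surely: the inequality alone is not enough, and one must pass through expectations and invoke the very definition of $\AV$ as a monotone limit of $\L^{*n}(p_\V)$. Everything else is a direct application of the martingale machinery combined with the two structural properties of $\AV$ recorded in Proposition~\ref{prop:abs}.
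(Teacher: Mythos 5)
Your proof of part 1 (martingale property from harmonicity, bounds from $0\le\AV\le 1_\h$, Doob) and of the construction of $\pp'_\rho$ is the same as the paper's. The interesting part is the characterization of the level sets \eqref{eq:absdensity}, which the paper defers to Section 4; there your argument is correct but takes a genuinely different route.

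The paper's proof first reduces $\{Y_\infty=1\}$ to the event that $\rho_n$ asymptotically lives on the eigenspace $q$ of $\AV$ for the eigenvalue $1$, then invokes a structural result from the reference \cite{CG} (their Theorem 14, stated as $q-p_\V\le p_\TT$) to reduce the difference $q-p_\V$ to the transient projection, and finally shows that the transient mass $T_n=\tr(p_\TT\rho_n)$ is a nonnegative supermartingale whose expectation goes to $0$, hence $T_\infty=0$ a.s. Your argument replaces this by working directly with $Z_n=\tr(p_\V\rho_n)$: it is a bounded submartingale (since $p_\V$ is subharmonic), it is dominated by $Y_n$ via item 4 of Proposition \ref{prop:abs}, and a direct computation of $\EE_\rho[Z_n]=\tr\bigl(\L^{*n}(p_\V)\sum_{\underline k}\rho(\underline k)\bigr)\to\tr\bigl(\AV\sum_{\underline k}\rho(\underline k)\bigr)=\EE_\rho[Y_\infty]$ combined with bounded convergence forces $\EE_\rho[Y_\infty-Z_\infty]=0$, hence $Y_\infty=Z_\infty$ a.s.\ because the difference is nonnegative. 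This is slightly more self-contained: you avoid identifying the $1$-eigenspace of $\AV$ and do not need the cited relation $q-p_\V\le p_\TT$; in exchange you rely on the monotone convergence $\L^{*n}(p_\V)\nearrow\AV$ and the explicit computation of $\EE_\rho[\rho_n]$, both of which are standard, as is the Cauchy--Schwarz upgrade from convergence of traces to trace-norm convergence. Both approaches are essentially equivalent probabilistic mass-accounting arguments; yours obtains in passing the cleaner statement that $Y_\infty=\lim_n\tr(p_\V\rho_n)$ a.s., not only on the extreme level sets.
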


We remark that, for this lemma, it is not necessary for $\V$ to be minimal.
The last sentence of the statement gives a mathematical meaning to the intuition that, given any enclosure $\V$, the corresponding $\pp^\prime_\rho$ encodes the notion of conditioning to the ``absorption in $\V$''. Nevertheless,  $Y_\infty$ is not a Bernoulli random variable in general, hence there does not need to exist any measurable set $B \in {\cal F}$ such that $\pp^\prime_\rho(\cdot)=\pp_\rho(\cdot|B)$, even if it can happen in some cases (see Example \ref{Example2} and in particular the simulations in Figure \ref{fig:traj}).

\begin{proof}
$Y_n$ is trivially positive and bounded and
\[\mathbb{E}_{\rho}[Y_{n+1}|{\cal F}_n]
=\sum_{i=1}^{v} \tr(L_i \rho_n L_i^*) \frac{ \tr(\AV L_i \rho_n L_i^*)}{ \tr(L_i \rho_n L_i^*)}
=\tr(\L^*(\AV)\rho_n )= \tr(\AV\rho_n)=Y_n.
\]

Since $(Y_n)$ is a positive and bounded martingale, it converges almost surely and in $L^{1}:=L^1(\Omega,\pp_\rho)$ to a positive random variable $Y_\infty$. When $\V$ and $\rho$ are such that 
$\mathbb{E}_\rho[\tr(\AV\rho_0)]=\sum_{\underline{k} \in V} \tr(\AV \rho(\underline{k}))>0$, we can introduce the random variables
\[0\leq Z_n:=\frac{Y_n }{\mathbb{E}_\rho[\tr(\AV\rho_0)]},
\]
and the sequence $(Z_n)$ is a $\pp_\rho$-martingale with expected value equal to $1$ and converges almost surely to
\begin{equation} \label{eq:density}
0 \leq Z_\infty:=\frac{Y_\infty }{\mathbb{E}_\rho[\tr(\AV\rho_0)]}.
\end{equation}
 Note that $Z_{\infty}\in L^1$. Therefore we can consider the new measure $\pp'_\rho$ which has density $Z_\infty$ with respect to $\pp_\rho$, so that
\[\frac{d\pp^\prime_\rho}{d\pp_\rho}=Z_\infty, \qquad 
\frac{d\pp_\rho^\prime}{d\pp_\rho} \bigg\rvert_{{\cal F}_n}=Z_n.
\]
The range of $\frac{d\pp'_\rho}{d\pp_\rho}$ trivially follows from the fact that $0 \le Y_\infty \le 1$. We postpone the proof of relations \eqref{eq:absdensity} to Section \ref{sec:mixgauss}, since we need some notions that we will introduce later on.
\end{proof}


We shall use spectral analysis and deformation techniques in order to prove the central limit theorem for the position process $(X_n)_{n \ge 0}$ under the measure $\pp^\prime_\rho$.
For all $u \in \rr^d$, let us define the following operators:
\[{L}^{(u)}_i=e^{\frac{u \cdot s_i}{2}}{L}_i,\qquad
\tilde{L}^{(u)}_i=e^{\frac{u \cdot s_i}{2}}{\tilde p}_\V {L}_i {\tilde p}_\V,  \quad i=1,\dots,v
\]
and we call $\L_u$ and $\tilde{\L}_u$ the analytic perturbations of $\L$ and $\tilde{\L}=\tilde{\L}_0$ respectively, defined as the completely positive operators
\begin{align*}
\L_u(\sigma) & =\sum_{i=1}^v L^{(u)*}_i \sigma L^{(u)}_i , \qquad 
\tilde{\L}_u (\sigma)  = \sum_{i=1}^v \tilde{L}^{(u)*}_i \sigma \tilde{L}^{(u)}_i.
\end{align*}
We denote by $\lambda_u$ the spectral radius of $\tilde{\L}_{u}$, that is $\lambda_u=r(\tilde{\L}_{u})$.
Theorem \ref{pf0} ensures that $\lambda_u \in \rm{Sp}(\tilde{\L}_{u})$ with corresponding positive eigenvector $\tau_u$. Notice that $\lambda_0=1$ and $\tau_0$ is the unique minimal invariant state supported on $\V$. Moreover, $\L_u$ and $\tilde{\L}_u$ can be extended for complex values of $u$ and form two analytic families of matrices: $\L_u(\sigma) =\sum_{i=1}^v {\rm e}^{u \cdot s_i} L^*_i\sigma L_i$ and $\tilde{\L}_u(\sigma) =\sum_{i=1}^v {\rm e}^{u \cdot s_i} {\tilde p}_\V L^*_i {\tilde p}_\V \sigma  {\tilde p}_\V {L}_i {\tilde p}_\V$. In Lemma \ref{lem:analytic} we shall prove that also the perturbed eigenvalue $\lambda_u$ and eigenvector $\tau_u$ are analytic at least in a neighborhood of the origin.

Notice that all previous mathematical objects depend on the enclosure $\V$, so it would be more precise to highlight this and denote them $\tilde{L}^{(u,\V)}_i, \tilde{\L}_u^\V ,...,  \pp^{\prime(\V)}_\rho$. Since the notations are already quite heavy, we drop the dependence on $\V$ in this section, since we shall use only one enclosure and we shall recover it when necessary, treating the general case.

\begin{lemma} \label{lem:analytic}
Let $\V$ be a minimal enclosure. The operators
$\tilde{\L}$ and $\tilde{\L}_{|\V}=\L_{|\V}$ have the same peripheral eigenvalues and eigenvectors with the same multiplicities.

Moreover in a complex neighborhood of the origin the following hold true:
\begin{enumerate}\item $u \mapsto \lambda_u$ and $u \mapsto \tau_u$ are analytic;
\item ${\rm supp}(\tau_u) \subset \V$.
\end{enumerate}
Hence $\lambda_u$ and $\tau_u$ coincide with the analogous quantities for the restricted deformation
$\tilde{\L}_{u|\V}={\L}_{u|\V}$ 
(i.e. $\lambda_u:=r({\L}_{u|\V}), {\L}_{u|\V}(\tau_u)=\lambda_u \tau_u$).
\end{lemma}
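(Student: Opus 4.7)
The plan is to reduce everything to the irreducible channel $\L_{|\V}$ via a block-triangular decomposition of $\tilde{\L}_u$ forced by the inclusion $\V \subset \tilde{\V}$, and then invoke analytic perturbation of the simple eigenvalue $1$.

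First I would verify the identity $\tilde{\L}_{u|\V} = \L_{u|\V}$ directly. Since $A(\V) \geq p_\V$ by Proposition~\ref{prop:abs}(4), we have $\tilde{p}_\V p_\V = p_\V$, and combined with the enclosure relation $L_i p_\V = p_\V L_i p_\V$ this gives $(\tilde{p}_\V L_i \tilde{p}_\V) p_\V = L_i p_\V$; hence for every $\sigma \in L^1(\V)$ the two maps coincide. Next, since $\tilde{\V}^\perp = \ker A(\V)$ is also an enclosure (Proposition~\ref{prop:abs}(3)), the Kraus operators $L_i$ are block-upper-triangular in the orthogonal decomposition $\hh = \V \oplus q\hh \oplus \tilde{\V}^\perp$ with $q := \tilde{p}_\V - p_\V$. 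A direct computation then shows that $\tilde{\L}_u$, whose range sits in $L^1(\tilde{\V})$, becomes block-upper-triangular on $L^1(\tilde{\V}) = L^1(\V) \oplus B(q\hh,\V) \oplus B(\V,q\hh) \oplus L^1(q\hh)$ as a super-operator, with diagonal blocks
\[
T_{11,u} = \L_{u|\V}, \quad T_{22,u}(y) = \sum_i e^{u\cdot s_i} A_i\, y\, D_i^*, \quad T_{33,u}(z) = \sum_i e^{u\cdot s_i} D_i\, z\, A_i^*, \quad T_{44,u}(\sigma) = \sum_i e^{u\cdot s_i} D_i\, \sigma\, D_i^*,
\]
where $A_i := p_\V L_i p_\V$ and $D_i := q L_i q$. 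Therefore $\mathrm{Sp}(\tilde{\L}_u) = \bigcup_{j=1}^4 \mathrm{Sp}(T_{jj,u})$.

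The hard part will be showing $r(T_{44,0}) < 1$; the estimates $r(T_{22,0}), r(T_{33,0}) < 1$ will then follow from a Cauchy--Schwarz-type iteration $\|T_{22,0}^n(y)\|^2 \leq \|T_{11,0}^n(p_\V)\|\cdot\|T_{44,0}^n(q)\|\cdot\|y\|^2$, yielding $r(T_{22,0})^2 \leq r(T_{11,0})\, r(T_{44,0}) < 1$. For $T_{44,0}$ I would argue by contradiction: it is a sub-trace-preserving CP map on $L^1(q\hh)$ (since $\sum_i D_i^* D_i \leq q$), so $r(T_{44,0}) \leq 1$; if equality held, Theorem~\ref{pf0} would produce a non-zero positive fixed point $\omega \in L^1(q\hh)$. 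Lifting $\omega$ to $L^1(\hh)$ by zero and exploiting the block-upper-triangular structure of $L_i$ (which also makes $\tilde{\V}$ an enclosure), a short induction yields $q\,\L^n(\omega)\,q = \omega$ for every $n$; by trace preservation this forces $\tr(p_\V \L^n(\omega)) = 0$ identically. But \eqref{eq:absdef} gives $\tr(p_\V \L^n(\omega)) \to \tr(A(\V)\omega)$, which is strictly positive since $\omega \neq 0$ is supported in $q\hh \subset \tilde{\V} = \mathrm{supp}(A(\V))$ where $A(\V)$ is strictly positive --- the desired contradiction.

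With the three strict bounds in hand, the peripheral eigenvalues of $\tilde{\L}$ coincide with those of $T_{11,0} = \L_{|\V}$ and the corresponding eigenvectors correspond via extension by zero: solving the triangular system for an eigenvector with peripheral eigenvalue $\mu$ forces the components in blocks $j \geq 2$ to vanish, since $\mu \notin \mathrm{Sp}(T_{jj,0})$ there. For the analytic statements, minimality of $\V$ makes $\L_{|\V}$ irreducible, so $1$ is an algebraically simple eigenvalue of $\L_{|\V}$; Kato's analytic perturbation theory then yields analytic $\mu_u$ and $\tilde\tau_u$ for the family $\L_{u|\V}$ in a complex neighborhood of $0$. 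By continuity, the strict inequalities $r(T_{jj,u}) < 1$ for $j \geq 2$ persist for small $u$, so the same block-triangular argument identifies the extension-by-zero of $\tilde\tau_u$ as the unique eigenvector of $\tilde{\L}_u$ with eigenvalue near $1$, supported in $\V$. For real $u$, Perron--Frobenius (Theorem~\ref{pf0}) applied to the CP map $\tilde{\L}_u$ identifies this eigenvalue with $\lambda_u = r(\tilde{\L}_u)$; analytic continuation extends the identifications $\lambda_u = \mu_u$ and $\tau_u = \tilde\tau_u$ to the whole complex neighborhood, establishing (1), (2) and the final assertion.
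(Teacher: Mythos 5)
Your proof is correct and takes a genuinely different, more self-contained route than the paper's. Both arguments reduce the problem to the irreducible channel $\L_{|\V}$ after localizing the peripheral spectrum of $\tilde{\L}$ to $L^1(\V)$, then invoke Kato's perturbation theory for the algebraically simple eigenvalue $1$. The paper achieves the localization by citing the external convergence result \cite[Theorem~14]{CG}, i.e.\ $\L^{*n}(A(\V)-p_\V)\to 0$, whence $\tilde{\L}^{*n}(p_{q\hh})\to 0$, which forces any peripheral (generalized) eigenvector of $\tilde{\L}$ to lie in $L^1(\V)$. You instead build the explicit four-block triangular super-operator decomposition of $\tilde{\L}_u$ on $L^1(\V)\oplus B(q\hh,\V)\oplus B(\V,q\hh)\oplus L^1(q\hh)$ and re-derive the relevant consequence of [CG] in this setting: your contradiction argument for $r(T_{44,0})<1$ (a fixed point of the corner map would yield an $\L$-sub-invariant positive operator inside $\mathrm{supp}(A(\V))$ with zero $A(\V)$-probability, impossible since $A(\V)>0$ on its support) together with the Cauchy--Schwarz estimate $r(T_{22,0})^2\le r(T_{11,0})\,r(T_{44,0})$ give the quantitative form of the paper's qualitative localization. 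The trade-off is a longer argument in exchange for avoiding the external citation.

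One small inaccuracy, inessential to the argument: in the three-block decomposition $\hh=\V\oplus q\hh\oplus\tilde{\V}^\perp$ the Kraus operators $L_i$ are \emph{not} block-upper-triangular (the block $p_{\tilde{\V}^\perp}L_iq$ need not vanish), and consequently $\tilde{\V}$ is in general \emph{not} an enclosure — the parenthetical remark to that effect is wrong. What you actually use, and what is correct, is that (i) the two-block restrictions $\tilde{p}_\V L_i\tilde{p}_\V$ are upper-triangular on $\tilde{\V}=\V\oplus q\hh$, which yields the block-triangular super-operator structure for $\tilde{\L}_u$, and (ii) the identity $qL_i=qL_iq$, which follows from $\V$ and $\tilde{\V}^\perp$ both being enclosures and is exactly what makes the induction $q\,\L^n(\tilde\omega)\,q=T_{44,0}^n(\omega)$ in the contradiction step go through.
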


\begin{proof}
1. Let $\theta \in [0,2\pi)$ and $\sigma \in L^1(\h)$ such that
\begin{equation} \label{eq:eigen}
\tilde{\L}(\sigma)={\rm e}^{i\theta}\sigma.
\end{equation}
In order to prove that the peripheral eigenvectors and eigenvalues of $\tilde{\L}$ are the same as those of $\tilde{\L}_{|\V}$ we need to prove that $\sigma=p_\V \sigma p_\V$. Let us consider the orthogonal decomposition ${\rm supp}(\AV)=\V \oplus {\cal W}$; by definition ${\cal W}={\rm supp}(\AV- p_\V)$ and, since ${\rm dim}(\h)<+\infty$, we know that there exists a constant $\gamma>0$ such that $p_{\cal W} \leq \gamma (\AV-p_\V)$, hence by \cite[Theorem 14]{CG} we have that
\[\tilde{\L}^{*n}(p_{\cal W})=\tilde{p}_\V\L^{*n}(p_{\cal W})\tilde{p}_\V \leq \gamma \tilde{p}_\V\L^{*n}(\AV-p_\V)\tilde{p}_\V \rightarrow 0.
\]
This implies that $\lim_{n \rightarrow +\infty}\lVert \tilde{\L}^n(\sigma)-p_\V \tilde{\L}^n(\sigma) p_\V \rVert = 0$, which, together with equation \ref{eq:eigen}, implies that $\sigma=p_\V\sigma p_\V$. If we consider $\sigma$ as above and $\xi$ is such that $\tilde{\L}(\xi)=e^{i\theta}\xi + \sigma$, with the same reasoning as before we can deduce that also $\xi=p_\V \xi p_\V$ and hence the algebraic multiplicity of ${\rm e}^{i \theta}$ is the same for $\tilde{\L}$ and $\tilde{\L}_{|\V}$. \\
2. By perturbation theory of linear matrices (see \cite{Ka}), we only need to show that $\lambda_0=1$ is an algebraically simple eigenvalue of $\tilde{\L}$, which, by virtue of point 1, is equivalent to prove it for $\tilde{\L}_{|\V}=\L_{|\V}$ and this follows for instance from \cite[Proposition 6.2]{Wo}.\\
3. Notice that by definition $\tilde{\L}_{u}$ preserves the set $p_\V L^1(\tilde{\hh})p_\V$ and eigenvalues and eigenvectors of $\tilde{\L}_{u|\V}$ are also eigenvalues and eigenvectors of $\tilde{\L}_{u}$. Let $\lambda_u^\V$ be the perturbation of $1$ for $\tilde{\L}_{|\V}$; by \cite[Theorem VII.1.7]{Ka} and the proof of point 2 in the present Lemma, for small values of $u$, $\lambda_u$ is the unique eigenvalue of $\tilde{\L}_{u}$ in a neighborhood of $1$ and it is algebraically simple, however $\lambda_u^\V$ is another eigenvalue of $\tilde{\L}_{u}$ and $\lambda_0^\V=1$ too, hence they must coincide in a neighborhood of the origin (remember that $u \mapsto \lambda_u^\V$ is continuous, see \cite[Theorem 5.1]{Ka}). Therefore we have that ${\rm supp}(\tau_u) \subset \V$.
\end{proof}

In Theorem \ref{th:singleCTL} below, we shall apply Bryc's theorem to prove the central limit theorem for the position process. We quote it for the reader's convenience:

\begin{theorem}[Bryc, {\cite[Proposition 1]{Br}}]\label{theo:Bryc}
Let $(T_n)_{n\ge 0}$ be a sequence of random variables defined on the probability spaces $(\Omega_n, {\cal F}_n,\pp_n)$, $T_n: \Omega_n \rightarrow \rr^d$ and suppose there exists $\epsilon>0$ such that
\[h(u)=\lim_{n \rightarrow +\infty} \frac{1}{n} \log(\mathbb{E}_n[{\rm e}^{u\cdot T_n}])
\]
exists for every complex $u$ with $|u|<\epsilon$. Then
\[\frac{(T_n-\mathbb{E}_n[T_n])}{\sqrt{n}}{\, \longrightarrow\, } 
{\cal N}(0,D)
\qquad{\text{(in law)}},
\]
 where  ${\cal N}(0,D)$ denotes a centered Gaussian measure with covariance $D=H(h)(0) \geq 0$ ( H is the hessian of $h$ at $u=0$), and 
 
 \[ \lim_{n \rightarrow +\infty}\frac{\mathbb{E}_n[T_n]}{n}=\nabla h(0).
 \]
\end{theorem}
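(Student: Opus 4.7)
The plan is to combine a Vitali/normal-family argument with L\'evy's continuity theorem. I would set
\[
M_n(u) := \mathbb{E}_n\bigl[e^{u\cdot T_n}\bigr], \qquad h_n(u) := \tfrac{1}{n}\log M_n(u),
\]
so that the hypothesis becomes $h_n\to h$ pointwise on the complex ball $B_\epsilon = \{u\in\mathbb{C}^d:|u|<\epsilon\}$. The first genuine step is to promote this to \emph{uniform} convergence on compact subsets of a slightly smaller ball $B_{\epsilon'}$. H\"older's inequality yields $|M_n(u)|\le M_n(\mathrm{Re}\,u)$, and on the real ball convexity of $h_n$ together with pointwise convergence forces local uniform boundedness, hence a uniform upper bound on $|M_n|$ throughout $B_{\epsilon'}$. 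A Cauchy-estimate argument then shows $|M_n|\ge 1/2$ on a small enough neighbourhood of $0$, so the analytic branch of $\log M_n$ is unambiguous there. Montel/Vitali now supplies uniform convergence of $h_n$ to $h$ on compact subsets, together with uniform convergence of all partial derivatives.

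Differentiating under the expectation at $u=0$ produces $\nabla h_n(0) = \mathbb{E}_n[T_n]/n$ and $H(h_n)(0) = \mathrm{Cov}_n(T_n)/n$, so the previous step immediately yields $\mathbb{E}_n[T_n]/n\to\nabla h(0)$ and $\mathrm{Cov}_n(T_n)/n\to H(h)(0)=D$. For the CLT itself I would write the characteristic function of $S_n := (T_n-\mathbb{E}_n[T_n])/\sqrt{n}$ as
\[
\phi_{S_n}(t) \,=\, e^{-i t\cdot \mathbb{E}_n[T_n]/\sqrt{n}}\,\exp\bigl(n\,h_n(i t/\sqrt{n})\bigr),
\]
Taylor-expand $h_n$ around $0$ to second order, and control the third-order remainder $R_n(u)=O(|u|^3)$ with a constant uniform in $n$, by Cauchy estimates applied to the family $\{h_n\}$ already known to be bounded on a fixed neighbourhood. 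Substituting $u=i t/\sqrt{n}$, the linear contribution cancels the prefactor, the quadratic term gives $-\tfrac12 t^{T} H(h_n)(0)\,t \to -\tfrac12 t^{T} D\,t$, and the remainder is of order $|t|^3/\sqrt{n}\to 0$. L\'evy's continuity theorem then concludes that $S_n\to\mathcal{N}(0,D)$ in distribution.

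The hard part will be the uniform boundedness step: securing both the non-vanishing of $M_n$ on a small complex ball (required to make $\log M_n$ analytic and single-valued) and a uniform upper bound on $|M_n|$. The non-vanishing follows from the normalisation $M_n(0)=1$ together with Cauchy-type Lipschitz estimates coming from the upper bound; the upper bound itself ultimately rests on convexity of the real-valued $h_n$ on the real ball and pointwise boundedness at a set of test-points spanning the ball. Once the normal-family machinery is justified, the remainder of the argument is essentially bookkeeping with Taylor's formula and L\'evy's theorem.
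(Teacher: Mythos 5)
The overall architecture --- analyticity plus a Vitali/normal-family argument to upgrade pointwise convergence of $h_n$ on a complex ball to locally uniform convergence of $h_n$ and all its derivatives, followed by a second-order Taylor expansion of the characteristic function and L\'evy's continuity theorem --- is the right strategy for Bryc's result (the paper itself does not reprove this statement; it is quoted from \cite{Br}). However, there is a genuine error in the step you yourself identify as ``the hard part.'' You claim that local uniform boundedness of the convex functions $h_n$ on the real ball, combined with $|M_n(u)| \le M_n(\mathrm{Re}\,u)$, yields a uniform-in-$n$ upper bound on $|M_n|$ on a complex ball $B_{\epsilon'}$. This is false: what that chain of inequalities delivers is $|M_n(u)| \le e^{n\,\sup h_n}$, which grows geometrically in $n$. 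The Cauchy-estimate argument you then invoke to conclude $|M_n|\ge 1/2$ on a fixed neighbourhood of $0$ therefore collapses; worse, its conclusion is simply untrue. Take $T_n = \sqrt{n}\,Z$ with $Z$ a standard Gaussian, so $M_n(u)=e^{nu^2/2}$: the hypothesis holds with $h(u)=u^2/2$, yet $|M_n(it)|=e^{-nt^2/2}\to 0$ for every real $t\neq 0$, so $|M_n|$ is not bounded away from $0$ on any fixed complex ball uniformly in $n$.

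The repair is to stop trying to control $M_n$ and work with $h_n$ directly. Non-vanishing of $M_n$ on $B_\epsilon$ is already built into the hypothesis: the very existence of $h(u)=\lim_n\frac{1}{n}\log M_n(u)$ presupposes that $\log M_n$ is defined on $B_\epsilon$, so each $h_n$ is analytic there with $h_n(0)=0$. What Vitali actually needs is local uniform boundedness of $\{h_n\}$, and the ingredients you assembled give exactly this once routed correctly: H\"older gives $\mathrm{Re}\,h_n(u)\le h_n(\mathrm{Re}\,u)$; convexity plus pointwise convergence on the real ball bounds $h_n(\mathrm{Re}\,u)$ above by a constant $C$ uniformly in $n$; and the Borel--Carath\'eodory inequality (applied to the restriction of $h_n$ to each complex line through the origin) converts the one-sided bound $\mathrm{Re}\,h_n\le C$ together with $h_n(0)=0$ into a two-sided bound $|h_n|\le C'$ on a slightly smaller ball, uniformly in $n$. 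With that in hand, Vitali's theorem, the Cauchy estimates on the third derivatives of $h_n$ (not of $M_n$), and the Taylor/L\'evy bookkeeping at the end of your argument are all correct as written.
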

The gradient and the hessian of the limit function $h$ will then describe the asymptotic mean and covariance matrix and the following lemma proves that they are related to the spectral radius of the perturbed operator restricted to the minimal enclosure $\V$.

\begin{lemma}\label{lambda} 
The function $c: \rr^d \ni u \mapsto \log(\lambda_u)$ is infinitely differentiable in $0$. For every $u \in \rr^d$, we introduce the operators $\L^\prime_{|\V,u}$ and $\L''_{|\V,u}$ by
\[
{\L'_{|\V,u}}, {\L''_{|\V,u}}:  L^1(\V) \longrightarrow L^1(\V)
\]
\[{\L^\prime_{|\V,u}}(\sigma) = \sum_{i=1}^v u \cdot s_i L_i \sigma  L_i^*, 
\qquad {\L''_{|\V,u}}(\sigma)= \sum_{i=1}^v (u \cdot s_i)^2L_i \sigma  L_i^*.
\]
Denoting $\lambda^\prime_u=\frac{d\lambda_{tu}}{dt}\big\rvert_{t=0}$, $\lambda^{\prime \prime}_u=\frac{d^2\lambda_{tu}}{dt^2}\big\rvert_{t=0}$, we have
\begin{align*}
\lambda^\prime_u=\tr({\L'_{|\V,u}}(\tau_0)) , \qquad
\lambda''_u=\tr({\L''_{|\V,u}}(\tau_0))+2\tr({\L'_{|\V,u}}(\eta_u)) 
\end{align*}
where $\eta_u \in L^1(\V)$ is the unique solution with zero trace of the equation
\begin{equation*} 
(\id-\L_{|\V})(\eta_u)={\L^\prime_{|\V,u}}(\tau_0)-\tr({\L^\prime_{|\V,u}}(\tau_0))\tau_0.
\end{equation*}
This implies immediately that

\begin{equation*} 
dc(0)(u)=\lambda^\prime_u,\quad d^2c(0)(u)=\lambda^{\prime\prime}_u-{\lambda^\prime_u}^2.
\end{equation*}

\end{lemma}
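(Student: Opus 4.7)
The plan is first- and second-order analytic perturbation of the eigenvalue equation $\L_{u|\V}(\tau_u)=\lambda_u \tau_u$, exploiting that $\lambda_0 = 1$ is an algebraically simple eigenvalue of the irreducible channel $\L_{|\V}$ and that $\tau_0$ is its unique fixed state. By Lemma \ref{lem:analytic} both $u\mapsto \lambda_u$ and $u\mapsto \tau_u$ are analytic near $0$, and since $\lambda_0=1\neq 0$ the map $c=\log\lambda$ is analytic (hence infinitely differentiable) in a neighborhood of $0$. I would normalize $\tau_{tu}$ by $\tr(\tau_{tu})=1$ for small $t$, forcing $\tr(\tau'_u)=\tr(\tau''_u)=0$, where $\tau'_u,\tau''_u$ denote the first and second derivatives in $t$ of $\tau_{tu}$ at $t=0$.

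Next I would Taylor-expand $\L_{tu|\V}=\L_{|\V}+t\L'_{|\V,u}+\tfrac{t^2}{2}\L''_{|\V,u}+O(t^3)$, which is immediate from expanding $e^{tu\cdot s_i}$, and match coefficients in $\L_{tu|\V}(\tau_{tu})=\lambda_{tu}\tau_{tu}$. The order-$t$ identity reads
\begin{equation*}
(\id-\L_{|\V})(\tau'_u)=\L'_{|\V,u}(\tau_0)-\lambda'_u \tau_0,
\end{equation*}
and the order-$t^2$ identity reads
\begin{equation*}
(\id-\L_{|\V})(\tau''_u)=\L''_{|\V,u}(\tau_0)+2\L'_{|\V,u}(\tau'_u)-\lambda''_u \tau_0-2\lambda'_u \tau'_u.
\end{equation*}
Taking traces and using trace-preservation of $\L_{|\V}$ (which holds because $\V$ is an enclosure, making $\L_{|\V}$ itself a channel on $L^1(\V)$) together with $\tr(\tau_0)=1$ and $\tr(\tau'_u)=\tr(\tau''_u)=0$, the first identity yields $\lambda'_u=\tr(\L'_{|\V,u}(\tau_0))$ and the second yields $\lambda''_u=\tr(\L''_{|\V,u}(\tau_0))+2\tr(\L'_{|\V,u}(\tau'_u))$.

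It then remains to identify $\tau'_u$ with $\eta_u$. Here I would use that minimality of $\V$ makes $\L_{|\V}$ irreducible, so by Theorem \ref{pf0} the peripheral eigenvalue $1$ is geometrically simple and $\ker(\id-\L_{|\V})=\cc\,\tau_0$; dually, $\ker(\id-\L^*_{|\V})=\cc\,p_\V$, and the range of $\id-\L_{|\V}$ is therefore the zero-trace subspace of $L^1(\V)$ by the Fredholm alternative. The right-hand side $\L'_{|\V,u}(\tau_0)-\tr(\L'_{|\V,u}(\tau_0))\tau_0$ is visibly zero-trace, so the equation admits a unique zero-trace solution; the normalized $\tau'_u$ satisfies exactly this equation, hence $\tau'_u=\eta_u$ and we obtain the claimed formula for $\lambda''_u$. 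The identities for $dc(0)(u)$ and $d^2c(0)(u)$ then drop out of $(\log\lambda)'=\lambda'/\lambda$ and $(\log\lambda)''=\lambda''/\lambda-(\lambda'/\lambda)^2$ evaluated at $\lambda_0=1$. The main obstacle is precisely this solvability/uniqueness step: without irreducibility (that is, without minimality of $\V$) the Fredholm argument collapses and $\eta_u$ would be defined only modulo a larger kernel, so the explicit formula would lose meaning.
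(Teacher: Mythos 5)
Your proof is correct and follows the same route as the paper's: the paper notes that, since $\V$ is an enclosure, the operators $\L'_{|\V,u}$ and $\L''_{|\V,u}$ coincide with the derivatives of the deformed channel $\L_{u|\V}$ built from the compressed Kraus operators $p_\V L_i p_\V$, and then appeals to Lemma~\ref{lem:analytic} and cites \cite[Corollary 5.9]{CP} for the first- and second-order perturbation computation. What you have written out — Taylor-matching of $\L_{tu|\V}(\tau_{tu})=\lambda_{tu}\tau_{tu}$, trace-preservation of $\L_{|\V}$, the trace-one normalization of $\tau_u$, and the Fredholm-alternative identification $\eta_u=\tau'_u$ via geometric simplicity from irreducibility — is precisely the content of that cited corollary, so the arguments agree in substance.
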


\begin{proof}
Notice that
\[{\L^\prime_{|\V,u}}(\sigma) = \sum_{i=1}^v u \cdot s_i \{p_\V L_i p_\V \}\sigma \{p_\V L_i^* p_\V\},
\]
due to the fact that $\V$ is an enclosure 
(and similarly for ${\L''_{|\V,u}}(\sigma)$). This fact, together with Lemma \ref{lem:analytic}, allows us to reduce the analysis to the irreducible channel $\L_{|\V}$ and the proof is the same as in \cite[Corollary 5.9]{CP}.
\end{proof}

\begin{theorem} \label{th:singleCTL}
Consider a minimal enclosure $\V$, and $\tau_0$ and $\lambda_u$ defined as before.
We introduce the vector
\[
m = \sum_{i=1}^v \tr(L_i \tau_0 L_i^*)s_i
\]
and the matrix $D$ which is the unique matrix satisfying the following formula for every $u \in \rr^d$:
\begin{equation*}
\langle u,Du \rangle=\lambda^{\prime\prime}_u-{\lambda^\prime_u}^2.
\end{equation*}
Then, under $\pp^\prime_\rho$,
\begin{equation} \label{CLT}
\frac{(X_n-X_0)-nm}{\sqrt{n}} \rightarrow {\cal N}(0,D)
\end{equation}
 where the convergence is in law. Moreover
\begin{equation*}
\left\vert \frac{\mathbb{E}^\prime_\rho[X_n-X_0]}{n} -m \right\vert =O\left (\frac{1}{n}\right).
\end{equation*}
\end{theorem}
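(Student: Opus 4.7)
My plan is to apply Bryc's theorem (Theorem~\ref{theo:Bryc}) to $T_n := X_n - X_0$ under $\pp^\prime_\rho$; equivalently, I must compute $h(u) := \lim_n n^{-1} \log \mathbb{E}^\prime_\rho[e^{u \cdot T_n}]$ on a complex neighborhood of $0$ and identify it with $\log\lambda_u$. Using that $Z_n := Y_n / \mathbb{E}_\rho[Y_0]$ is the Radon--Nikodym density of $\pp^\prime_\rho$ with respect to $\pp_\rho$ restricted to $\mathcal{F}_n$ (Lemma~\ref{lem:supermartingale}) and that $T_n$ is $\mathcal{F}_n$-measurable,
\[
\mathbb{E}^\prime_\rho[e^{u \cdot T_n}] \,=\, \frac{\mathbb{E}_\rho[e^{u \cdot T_n} Y_n]}{\mathbb{E}_\rho[Y_0]}.
\]
Expanding the numerator as a sum over trajectories and collecting the deformation weights $e^{u\cdot s_{j_k}}$ into the Kraus operators produces the spectral expression
\[
\mathbb{E}_\rho[e^{u \cdot T_n} Y_n] \,=\, \tr\!\bigl(\L_u^n(\AV)\,\tilde\rho\bigr),\qquad \tilde\rho := \sum_{\underline k \in V} \rho(\underline k).
\]

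Next I would reduce $\L_u$ to $\tilde\L_u$ on the subspace of operators supported on $\tilde p_\V$. Since $\ker \AV$ is an enclosure for $\L$ by Proposition~\ref{prop:abs}(3), the third item of the enclosure characterization recalled just before Lemma~\ref{lem:supermartingale} gives $\tilde p_\V L_i = \tilde p_\V L_i \tilde p_\V$ and, by taking adjoints, $L_i^* \tilde p_\V = \tilde p_\V L_i^* \tilde p_\V$. Hence $\L_u(x) = \tilde\L_u(x)$ for every $x$ with $x = \tilde p_\V x \tilde p_\V$, and since $\AV = \tilde p_\V \AV \tilde p_\V$ iteration yields $\L_u^n(\AV) = \tilde\L_u^n(\AV)$. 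Lemma~\ref{lem:analytic} then guarantees that $\lambda_u$ is a simple isolated eigenvalue of $\tilde\L_u$ in a complex neighborhood of $0$, so analytic perturbation theory (cf.\ \cite{Ka}) provides
\[
\tilde\L_u^n \,=\, \lambda_u^n \Pi_u + R_u^n,\qquad r(R_u) < |\lambda_u|,
\]
with $\Pi_u$ an analytic rank-one spectral projector and $\Pi_0(\AV) = \AV$ (indeed, harmonicity $\L^*(\AV) = \AV$ combined with the previous reduction identifies $\AV$ as a right eigenvector of $\tilde\L_0$ for the eigenvalue $1$). Writing $A_u := \Pi_u(\AV)$, which is analytic with $A_0 = \AV$,
\[
\L_u^n(\AV) \,=\, \lambda_u^n A_u + O(\mu^n),\qquad \mu < |\lambda_u|,
\]
uniformly for $u$ in a neighborhood of $0$.

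Combining the steps gives $\mathbb{E}^\prime_\rho[e^{u\cdot T_n}] = [\lambda_u^n \tr(A_u\tilde\rho) + O(\mu^n)]/\mathbb{E}_\rho[Y_0]$; continuity and $\tr(A_0\tilde\rho) = \mathbb{E}_\rho[Y_0] > 0$ keep $\tr(A_u\tilde\rho) \ne 0$ for $u$ small, whence $h(u) = \log\lambda_u$ is analytic in a complex neighborhood of $0$. Bryc's theorem then yields
\[
\frac{T_n - \mathbb{E}^\prime_\rho[T_n]}{\sqrt n} \,\longrightarrow\, \mathcal{N}\bigl(0,H(h)(0)\bigr)\ \text{(in law)}, \qquad \frac{\mathbb{E}^\prime_\rho[T_n]}{n}\,\longrightarrow\, \nabla h(0),
\]
and Lemma~\ref{lambda} matches $\nabla h(0)$ with $m$ and $H(h)(0)$ with the matrix $D$ defined in the statement. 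For the $O(1/n)$ estimate, differentiating the asymptotic expansion at $u=0$ produces $\mathbb{E}^\prime_\rho[T_n] = nm + O(1)$, i.e.\ $|\mathbb{E}^\prime_\rho[T_n]/n - m| = O(1/n)$; Slutsky's lemma finally permits replacing the centering by $nm$ in the CLT to obtain \eqref{CLT}.

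The hard part is the spectral step: if $\L$ is reducible with several minimal enclosures, the eigenvalue $1$ of $\L_0$ is multiple, so direct simple-eigenvalue perturbation at the level of $\L_u$ itself is not available. The identity $\L_u^n(\AV) = \tilde\L_u^n(\AV)$ obtained in the reduction step is precisely what resolves this, by restricting the dynamics to the block $\tilde p_\V B(\hh)\tilde p_\V$ where Lemma~\ref{lem:analytic} guarantees that $\lambda_u$ is a simple peripheral eigenvalue.
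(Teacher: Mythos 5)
Your outline matches the paper's: compute the moment generating function under $\pp^\prime_\rho$, reduce the deformed dynamics to $\tilde\L_u$ on the block $\tilde p_\V B(\hh)\tilde p_\V$ using that $\ker\AV$ is an enclosure, invoke Lemma~\ref{lem:analytic} to perturb the simple eigenvalue $1$, conclude $h(u)=\log\lambda_u$, and finish with Bryc's theorem and Lemma~\ref{lambda}. The reduction step $\L_u^n(\AV)=\tilde\L_u^n(\AV)$ via $\tilde p_\V L_i=\tilde p_\V L_i\tilde p_\V$ is correct and is exactly what the paper uses when it writes the trajectory sum directly in terms of $\tilde L_{j_k}$.

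The genuine gap is in the spectral step. You write $\tilde\L_u^n=\lambda_u^n\Pi_u+R_u^n$ with $r(R_u)<|\lambda_u|$ and attribute this to ``simple isolated eigenvalue.'' Simplicity and isolation do \emph{not} give a spectral gap below $|\lambda_u|$: they give a well-defined rank-one spectral projector and analyticity of $\lambda_u$, $\Pi_u$, but the remainder $R_u$ can have spectral radius equal to $|\lambda_u|$. This is precisely what happens when the irreducible channel $\L_{|\V}$ is \emph{periodic} with period $l>1$: by Lemma~\ref{lem:analytic} the peripheral spectrum of $\tilde\L_0$ consists of the $l$ simple roots of unity $e^{2\pi i k/l}$, and after tilting these all perturb to eigenvalues of modulus $|\lambda_u|$ (this is the quantum Perron--Frobenius structure; tilting does not break the cyclic symmetry). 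So your asymptotic $\L_u^n(\AV)=\lambda_u^n A_u+O(\mu^n)$ with $\mu<|\lambda_u|$ fails in the periodic case, and with it your derivation of $h(u)=\log\lambda_u$ on a complex neighborhood of $0$. The conclusion is still true, but your argument does not establish it: you have only handled the aperiodic case. The paper explicitly treats the periodic case by a separate argument using the cyclic resolution $p_0,\dots,p_{l-1}$, defining absorption operators $A(p_j)$ for the aperiodic power $\tilde\L^l$, and reducing to the aperiodic case block by block; you would need to supply some equivalent of this to close the proof.
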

\begin{remark} \label{rmk:previous}
We point out that, when there is a unique minimal enclosure $\V$, then $A(\V)=1_\hh$, $\pp^\prime_\rho=\pp_\rho$, and Theorem \ref{th:singleCTL} is the central limit theorem for the position process. 
\end{remark}
\begin{proof}
With some calculation we get the expression for the moment generating function of the process $(X_n-X_0)$ for every $u \in \cc^d$:
\[\begin{split}
\mathbb{E}^\prime_\rho[e^{u\cdot ( X_n-X_0)}]&=\frac{1}{\mathbb{E}_\rho[\tr(\AV\rho_0)]}\sum_{\underline{k} \in V} \sum_{s_{j_1}, \dots, s_{j_n}} e^{u \cdot \sum_{k=1}^n s_{j_k}} \tr(A(\V) \tilde{L}_{j_n} \cdots  \tilde{L}_{j_1} \rho(\underline{k})  \tilde{L}_{j_1}^* \cdots  \tilde{L}_{j_n}^*)=\\
&=\sum_{\underline{k}\in V}\frac{\tr(\AV \tilde{\L}_u^n(\rho(\underline{k})))}{\mathbb{E}_\rho[\tr(\AV\rho_0)]}.
\end{split}
\]
We are interested in the functions of the form
\[h_n(u)=\frac{1}{n} \log (\mathbb{E}_\rho^\prime[e^{\langle u, X_n-X_0 \rangle}]).
\]
In order to apply Bryc's theorem (Theorem \ref{theo:Bryc}), we need to show the existence of $\lim_{n \rightarrow +\infty} h_n(u)$ for $u$ in a complex neighborhood of $0$. Let us first consider the case where $\tilde{\L}_{|\V}$ is aperiodic (since we mimic the proof of \cite[Theorem 5.12]{CP}, we refer to \cite{CP} for more information about the notion of period for quantum channels). In this case we have
\[\delta=\sup\{|\lambda| : \lambda \in {\rm Sp}(\tilde{\L})\setminus \{1\}\}<1
\]
and so, considering the Jordan form of $\tilde{\L}$, there exists $\epsilon>0$ such that $\delta+\epsilon<1$ and for $u$ in a neighbourhood of $0$, for $n \in \nn$ we have
\[\tilde{\L}^n_u(\cdot)=\lambda_u^n(\varphi_u(\cdot)\tau_u + O((\delta+\epsilon)^n))
\]
where $\varphi_u$ is a linear form on $L^1(\hh)$, analytic in $u$ in the considered neighbourhood of the origin and $O$ is with respect to any norm (remember that in finite dimension all the operator norms are equivalent). Therefore
\[\begin{split}
h_n(u)&=\log(\lambda_u)\\
&\quad +\frac{1}{n}\left [-\log(\mathbb{E}^\prime_\rho[\tr(\AV\rho_0)])+\log \left (\sum_{\underline{k} \in V} \varphi_u(\rho(\underline{k}))\tr(A(\V)\tau_u) + O((\delta+\epsilon)^n\right )\right ]\\
&\rightarrow \log(\lambda_u).\\
\end{split}
\]
From the proof of Theorem \ref{theo:Bryc} we know that all $h_n$ are analytic in a neighborhood of the origin. Further, these functions converge uniformly on compact sets to $h$ and $\sup_{u \in K}|h_n(u)-\log(\lambda_u)|=O(1/n)$ where $K$ is a compact set in the considered neighborhood of the origin. Hence, by Cauchy integral formula we can deduce, since
\[\frac{\mathbb{E}^\prime_\rho [X_n-X_0]}{n}=\nabla h_n(0) \text{ and } m=\nabla h(0)
\]
that
\[\left |\frac{\mathbb{E}^\prime_\rho[X_n-X_0]}{n}-m \right |=O \left (\frac{1}{n} \right )
\]
and this allows us to put $nm$ instead of ${\mathbb{E}^\prime_\rho[X_n-X_0]}$ in equation \eqref{CLT}.

On the other hand, if $\L_{|\V}$ has period $l>1$ with cyclic resolution $p_0,\dots,p_{l-1}$, we can write for $n=ql+r$ and $0\leq r <l$
\[\mathbb{E}^\prime_\rho[e^{u \cdot ( X_n-X_0 )}]=\sum_{j=0}^{l-1}\underbrace{\sum_{\underline{z}\in V}\frac{\tr(A(p_j)\rho(\underline{z}))}{\mathbb{E}_\rho[\tr(\AV\rho_0)]}}_{w_j}
\underbrace{\sum_{\underline{k} \in V}\frac{\tr(A(p_j)\tilde{\L}_u^n(\rho(\underline{k})))}{\sum_{\underline{z}\in V}\tr(A(p_j)\rho(\underline{z}))}}_{II}.
\]
We can safely define $A(p_j)$ using $\tilde{\L}^l$, for which $p_0,\dots,p_{l-1}$ are minimal enclosures. Furthermore we can express $II$ as
\[II=\sum_{\underline{k} \in V}\frac{\tr(A(p_j)\tilde{\L}_u^{lq}(\tilde{\L}_u^r(\rho(\underline{k}))))}{\sum_{\underline{z}\in V}\tr(A(p_j)\rho(\underline{z}))}.
\]
The support projection of $A(p_j)$, which we call $P_j$, is superharmonic for $\tilde{\L}^l$, hence, if we consider $\tilde{\L}^l_{j,u}:=P_j\tilde{\L}^l(P_j \cdot P_j)P_j$, we can write 
$$
\tr(A(p_j)\tilde{\L}_u^{lq}(\tilde{\L}_u^r(\rho(\underline{k}))))=\tr(A(p_j)\tilde{\L}^{lq}_{j,u}(P_j\tilde{\L}_u^r(\rho(\underline{k}))P_j))
$$ 
and we are back to the aperiodic case. Furthermore the perturbation of $1$ for every reduction $\tilde{\L}^l_{j,u}$ is the same as the one of $\tilde{\L}_u^l$ since $P_j\tau_uP_j$ is an eigenvector of $\tilde{\L}^l_{j,u}$ for the eigenvalue $\lambda^l_u$:
\[\tilde{\L}^l_{j,u}(P_j\tau_uP_j)=P_j\tilde{\L}^l_u(\tau_u)P_j=\lambda^l_u P_j\tau_u P_j.
\]
Therefore we can again prove the statement.
\end{proof}

\section{General case: mixture of Gaussians} \label{sec:mixgauss}

In order to tackle the general case, we now need to consider different enclosures and to handle the simultaneous appearance  of different Gaussians. The description of the general context requests the introduction of some additional notions in order to describe an appropriate decomposition of the local Hilbert space $\h$. This will induce a decomposition of the measure ${\mathbb P}_\rho$ in terms of  measures of the form ${\mathbb P}^\prime_\rho$ as defined  in Lemma \ref{lem:supermartingale}.

{\bf Decomposition of the local Hilbert space and of the recurrent subspace.}

We introduce the fast recurrent and the transient space for the local map following \cite{BN,Uma}; for other notions of recurrence for OQRWs we refer to \cite{BBP,DhMu,JL} and references therein. We denote by $\R$ the fast recurrent space for the channel $\L$
\begin{align}\label{rec}
\R=\sup \{\text{$\textrm{supp}(\omega) \text{ }\vert$ $\omega$ is an invariant state for $\L$} \}.
\end{align}
$\R$ is an enclosure and, when \underline {the space $\h$ is finite dimensional}, any minimal enclosure is included in $\R$ and is the support of a unique extremal invariant state; moreover we have trivial slow recurrent subspace, while $\R$ is always non trivial and ``absorbing''. Further, the orthogonal complement of $\R$ is the transient space, usually denoted by $\TT$ and the absorption in $\R$ is the identity operator
(see \cite{BN,CP,Uma})
\[
\hh=\R\oplus\TT,\qquad
A(\R)=1_\hh-\lim_{n \rightarrow +\infty} \L^{*n}(p_\TT)= 1_\hh.
\]

The structure of quantum channels induces a decomposition of the fast recurrent space, also naturally related to the invariant states (see \cite{BN} for the finite dimensional case and \cite{PC,JP,Uma} for infinite dimensional state spaces).
This decomposition is the noncommutative counterpart of the decomposition in communication classes for classical Markov chains and plays a fundamental role in different contexts. Here we shall briefly recall the decomposition and the main properties we need. 

For a quantum channel acting on $L^1(\h)$, there exists a unique decomposition of $\R$ of the form 
\begin{align*}
\R=\underset{\alpha\in A} {\oplus}\, \chi_\alpha,
\end{align*}
where $(\chi_\alpha)_{\alpha\in A}$ is a finite set of mutually orthogonal enclosures and 
every $\chi_\alpha$ is minimal in the set of enclosures verifying the property:

\centerline{for any minimal enclosure ${\cal W}$ either ${\cal W}\perp \chi_\alpha$ or ${\cal W} \subset \chi_\alpha$.}

Every $\chi_\alpha$ either is a minimal enclosure or can be further decomposed (but not in a unique way!) as the sum of mutually orthogonal isomorphic minimal enclosures, i.e. 
\begin{equation}\label{R-decomposition}
\chi_\alpha= \underset{\beta \in I_\alpha}{\oplus} \V_{\alpha,\beta}, \qquad
\R=\underset{\alpha\in A} {\oplus} \, \chi_\alpha = \underset{\alpha\in A} {\oplus} \,\underset{\beta \in I_\alpha}{\oplus}\V_{\alpha,\beta},
\end{equation} 
for some finite set $\V_{\alpha,\beta}, \beta\in I_\alpha$ of minimal enclosures and, if we fix a particular $\bar\beta\in I_\alpha$, there exists a unitary transformation $U_\alpha$ such that
\begin{equation}\label{unitary}
 U_\alpha :{\mathbb C}^{|I_\alpha|} \otimes  \V_{\alpha,\bar \beta}
\rightarrow \chi_\alpha.
\end{equation}
Moreover one can define an irreducible quantum channel $\psi$ on $B(\V_{\alpha,\bar \beta})$ which completely describes the restriction of the channel to $\chi_\alpha$
\begin{equation} \label{psi}
\L_{|\R}^*(U_\alpha (a\otimes b)U^*_\alpha)
= U_\alpha (a\otimes \psi(b))U^*_\alpha
\qquad a\in B({\mathbb C}^{|I_\alpha|}), \,  b\in B(\V_{\alpha,\bar \beta}).
\end{equation}

\begin{remark}
$\chi_\alpha$ is a minimal enclosure if and only if $|I_\alpha|=1$. Otherwise, it is not minimal and it admits infinite possible decompositions in orthogonal minimal enclosures of the form $U_\alpha (\cc v \otimes \V_{\alpha,\overline{\beta}})$ for $v \in \cc^{|I_\alpha|}$. In this case, however, a rigid structure of the channel essentially reduces the action on any minimal enclosure inside $\chi_\alpha$ to be the same up to a unitary transform.
\end{remark}

\begin{lemma} \label{lemma-m-D}
The parameters $m=m(\V)$ and $D=D(\V)$ introduced in Theorem \ref{th:singleCTL} are independent of the particular minimal enclosure $\V$ in $\chi_\alpha$.  Then we define
\[m_\alpha:=\sum_{i=1}^v \tr(L_i \tau_0^\V L_i^*)s_i,
\qquad \langle u, D_\alpha u\rangle = \lambda''_u - {\lambda'_u}^2,
\]
where $\lambda'_u,\lambda''_u$ are defined as in Lemma \ref{lambda} for $\L_{|\V}$.
\end{lemma}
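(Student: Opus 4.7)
The plan is to leverage the tensor product structure of $\L$ on $\chi_\alpha$ encoded in~\eqref{psi}: essentially, all minimal enclosures inside $\chi_\alpha$ carry the same restricted dynamics up to a trivial unitary identification, so the spectral quantities appearing in Lemma~\ref{lambda} and Theorem~\ref{th:singleCTL} cannot depend on the choice of $\V\subset\chi_\alpha$.

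First I would turn~\eqref{psi} into a product form for the Kraus operators themselves. Since $\chi_\alpha$ is an enclosure, the third characterization of enclosures from Section~\ref{oqw0} gives $L_i\,p_{\chi_\alpha}=p_{\chi_\alpha}L_i p_{\chi_\alpha}$ for every $i$, so $\hat L_i:=U_\alpha^*\,(L_i|_{\chi_\alpha})\,U_\alpha$ are well-defined operators on $\cc^{|I_\alpha|}\otimes\V_{\alpha,\bar\beta}$. Dualising~\eqref{psi}, one checks that $\sum_i \hat L_i(\omega_1\otimes\omega_2)\hat L_i^*=\omega_1\otimes\tilde\psi(\omega_2)$, where $\tilde\psi$ denotes the predual channel of $\psi$ acting on $L^1(\V_{\alpha,\bar\beta})$; hence $\{\hat L_i\}_{i=1}^v$ is a Kraus family for $\id\otimes\tilde\psi$. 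If $\{K_k\}_k$ is a minimal Kraus family for $\tilde\psi$, then $\{\id\otimes K_k\}_k$ is minimal for $\id\otimes\tilde\psi$, and the standard uniqueness-up-to-partial-isometry of Kraus decompositions yields
\begin{equation*}
\hat L_i=\sum_k c_{ik}(\id\otimes K_k)=\id\otimes \tilde K_i,\qquad \tilde K_i:=\sum_k c_{ik}K_k\in B(\V_{\alpha,\bar\beta}).
\end{equation*}

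This product form lifts immediately to the deformation: for every $u\in\cc^d$,
\begin{equation*}
\L_u\bigl(U_\alpha(\omega_1\otimes\omega_2)U_\alpha^*\bigr)=U_\alpha\bigl(\omega_1\otimes\psi_u(\omega_2)\bigr)U_\alpha^*,\qquad \psi_u(\omega):=\sum_{i=1}^v e^{u\cdot s_i}\,\tilde K_i\,\omega\,\tilde K_i^*.
\end{equation*}
Any minimal enclosure $\V\subset\chi_\alpha$ has the form $\V=U_\alpha(\cc v\otimes\V_{\alpha,\bar\beta})$ for some unit vector $v\in\cc^{|I_\alpha|}$, and after conjugation by $U_\alpha$ the restriction $\L_{u|\V}$ coincides with $\psi_u$ acting on $L^1(\V_{\alpha,\bar\beta})$, independently of $v$. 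Consequently $\lambda_u=r(\L_{u|\V})=r(\psi_u)$ together with $\lambda'_u,\lambda''_u$ depend only on $\chi_\alpha$, and so does $D(\V)$ by Lemma~\ref{lambda}.

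For the mean vector, note that $\tau_0^\V:=U_\alpha(\ketbra{v}{v}\otimes\tau_\psi)U_\alpha^*$ (with $\tau_\psi$ the unique invariant state of $\tilde\psi=\psi_0$) is a state supported in $\V$ and fixed by $\L$, so by minimality of $\V$ it is the unique invariant state of $\L_{|\V}$. Using $\hat L_i=\id\otimes\tilde K_i$,
\begin{equation*}
\tr(L_i\tau_0^\V L_i^*)=\tr\bigl(\ketbra{v}{v}\otimes(\tilde K_i\tau_\psi\tilde K_i^*)\bigr)=\tr(\tilde K_i\tau_\psi\tilde K_i^*),
\end{equation*}
independent of $v$, so $m(\V)=\sum_i\tr(\tilde K_i\tau_\psi\tilde K_i^*)\,s_i$ coincides with the $m_\alpha$ of the statement. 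The step I expect to be most delicate is the Kraus uniqueness argument giving $\hat L_i=\id\otimes\tilde K_i$: it requires padding the two Kraus families for $\id\otimes\tilde\psi$ with zero operators until they have matching cardinality and then invoking the classical unitary freedom of Kraus representations, being careful that the resulting ambiguity really only mixes the operators on the $\V_{\alpha,\bar\beta}$ factor and leaves the $\cc^{|I_\alpha|}$ factor untouched.
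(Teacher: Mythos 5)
Your proposal is correct, and it arrives at the conclusion via a somewhat different implementation of the same underlying idea (the tensor factorization of the dynamics on $\chi_\alpha$ from~\eqref{psi}). The paper instead introduces the partial isometry $Q=U_\alpha(\ketbra{w}{v}\otimes 1)U_\alpha^*$ mapping $\V$ onto ${\mathcal W}$, observes that $Q$ and $Q^*$ are harmonic (fixed points of $\L^*$ restricted to $\chi_\alpha$), and then invokes a structure theorem for the fixed-point algebra of a fast-recurrent channel to conclude that $Q$ commutes with all the Kraus operators $L_i,L_i^*$; from there $\tau_0^{\mathcal W}=Q\tau_0^\V Q^*$, the trace identity $\tr(L_i\tau_0^{\mathcal W}L_i^*)=\tr(L_i\tau_0^\V L_i^*)$, and the analogous identities for $\L'_{|\V,u}$, $\L''_{|\V,u}$ and $\eta_u$ all follow by a direct computation. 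Your route instead derives explicitly the tensor form $\hat L_i=\id\otimes\tilde K_i$ of the restricted Kraus operators from the unitary freedom in Kraus decompositions, which is logically equivalent to the commutation $[Q,L_i]=0$ (both express that the Kraus operators sit in the commutant of the multiplicity factor $M_{|I_\alpha|}\otimes 1$). The two arguments thus lean on the same structure but package it differently: yours is more self-contained and cleanly shows that the entire deformed restriction $\L_{u|\V}$ is unitarily conjugate to a single map $\psi_u$ independent of $\V$, which handles $m$, $\lambda_u$, $\lambda'_u$, $\lambda''_u$, $\eta_u$, and hence $D$ in one stroke; the paper's argument is shorter on the page because it outsources the factorization to a cited commutation result and then verifies each spectral quantity separately. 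Both are sound; the Kraus-uniqueness step you flagged as delicate is indeed fine because the intertwining coefficients are scalars, so any mixing stays within the $\V_{\alpha,\bar\beta}$ factor as you argued.
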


\begin{proof}
Let us consider two minimal enclosures 
$\V$ and ${\mathcal W}$ contained in a same $\chi_\alpha$. We just have to prove that the parameters $m$ and $D$ are equal for the two enclosures.

Relations \eqref{unitary} and \eqref{psi} imply that there exist two vectors $v,w$ in ${\mathbb C}^{|I_\alpha|}$ such that 
$$
\V = U_\alpha (({\mathbb C}v)\otimes \V_{\alpha,\bar \beta})U^*_\alpha, \qquad
{\mathcal W} = U_\alpha (({\mathbb C}w)\otimes \V_{\alpha,\bar \beta})U^*_\alpha, \qquad
$$
and we can define a partial isometry $Q  = U_\alpha ((|w\rangle \langle v |)\otimes 1_{\V_{\alpha,\bar \beta}})U^*_\alpha$, from $\V$ to ${\mathcal W}$, such that
\begin{align}\label{op Q}
Q^{\ast}Q=p_{\V}, \quad QQ^{\ast}=p_{\mathcal W}
\quad\text{ and } \quad
\L_{|\V}^*(x)
= Q^*\L_{|{\mathcal W}}^*(Q x Q^*) Q \qquad \forall x \in B(\V),
\end{align}
where $\L_{|\V}$ and $\L_{|{\mathcal W}}$ are the restrictions of $\L$ to $\V$ and ${\mathcal W}$ respectively, following the notations introduced before. Due to relation \eqref{psi}, $Q$ (and $Q^*$) is also a fixed point for the dual channel $\L^*$, so that it commutes with the Kraus operators $L_i, L_i^*$ for all $i$ (see for instance \cite{CJ}, in particular Proposition 1 applied to the fast recurrent channel $\L$ restricted to $\chi_\alpha$).

Moreover, since $\V$ and ${\mathcal W}$ are minimal, they are the support of two invariant states, that we can denote by $\tau_0^\V$ and $\tau_0^{\mathcal W}$ and will verify
 $$
 \tau_0^\V=Q^* \tau_0^{\mathcal W} Q.
 $$
Then we have
$$
\tr(L_i \tau_0^{\mathcal W} L_i^*)
= \tr(L_i Q\tau_0^{\V} Q^*L_i^*)
= \tr(Q L_i \tau_0^{\V} L_i^* Q^*) 
= \tr(p_{\V} L_i \tau_0^{\V} L_i^*)
= \tr( L_i \tau_0^{\V} L_i^*)
$$
so that
\[
m({\mathcal W}) = \sum_i \tr(L_i \tau_0^{\mathcal W} L_i^*) s_i
= \sum_i \tr( L_i \tau_0^{\V} L_i^*) s_i
= m(\V).
\]

Similarly we deduce, for any $u \in \rr^d$,
\[{\L^\prime_{|\V,u}}(Q^* \cdot Q)=Q^*{\L^\prime_{|{\cal W},u}}(\cdot )Q, \quad {\L^{\prime\prime}_{|\V,u}}(Q^* \cdot Q)=Q^*{\L^{\prime\prime}_{|{\cal W},u}}(\cdot )Q.
\]
Therefore
\[
\tr({\L_{|\V,u}^\prime}(\tau_0^{\V}))
=\tr({\L_{|{\cal W},u}^\prime}(\tau_0^{\mathcal W}))
\qquad \mbox{and} \qquad
\tr({\L_{|\V,u}^{\prime\prime}}(\tau_0^{\V}))=\tr({\L_{|{\cal W},u}^{\prime\prime}}(\tau_0^{\mathcal W})).
\]
By the same arguments, for all $u \in \rr^d$,
\[\eta^\V_u=Q^*\eta^{\cal W}_u Q
\qquad \mbox{and} \qquad
\tr({\L_{|\V,u}^\prime}(\eta^\V_u))=\tr({\L_{|{\cal W},u}^\prime}(\eta^{\cal W}))
\]
and we can conclude that $D(\V)=D({\mathcal W})$.
\end{proof}

{\bf Decomposition of the measure ${\mathbb P}_\rho$.}

In Lemma \ref{lem:supermartingale}, we fixed an enclosure $\V$ and we introduced the probability measure denoted by $\pp'_\rho$. Now we need to handle different enclosures, the ones appearing in the decomposition of $\mathcal R$ given in relations \eqref{R-decomposition}.  We need to highlight the dependence on the enclosure and we shall denote from now on by 
$\mathbb{P}^\alpha$ (resp.ly $\mathbb{P}^{\alpha,\beta}$) the measure $\mathbb{P}'_\rho$ obtained with $\V=\chi_\alpha$ (resp.ly $\V=\V_{\alpha,\beta}$), i.e. with densities
\begin{equation} \label{eq:density2}
\frac{d\pp^\alpha_\rho}{d\pp_\rho} \bigg\rvert_{{\cal F}_n}
=\frac{\tr(A(\chi_\alpha)\rho_n) }{\mathbb{E}_\rho[\tr(A(\chi_\alpha)\rho_0)]},
\qquad\qquad
\frac{d\pp^{\alpha,\beta}_\rho}{d\pp_\rho} \bigg\rvert_{{\cal F}_n}
=\frac{\tr(A(\V_{\alpha,\beta})\rho_n) }{\mathbb{E}_\rho[\tr(A(\V_{\alpha,\beta})\rho_0)]}.
\end{equation}

We can then decompose $\pp_\rho$ into a mixture of $\pp_\rho^\alpha$ and $\pp_\rho^{\alpha,\beta}$.

\begin{lemma} \label{lem:dis}
For any $\alpha \in A$, $\beta \in I_\alpha$ let us define
\begin{eqnarray*}
a_\alpha(\rho)&:=&\mathbb{E}_\rho[Y^\alpha_0]
=\mathbb{E}_\rho[\tr(A(\chi_\alpha)\rho_0)]
=\sum_{\underline{k} \in V} \tr(A(\chi_\alpha) \rho(\underline{k}))
\\
\mbox{and}\qquad
a_{\alpha,\beta}(\rho)&:=&\mathbb{E}_\rho[Y^{\alpha,\beta}_0]=\mathbb{E}_\rho[\tr(A(\V_{\alpha,\beta})\rho_0)]=\sum_{\underline{k} \in V} \tr(A(\V_{\alpha,\beta}) \rho(\underline{k})).
\end{eqnarray*}
Then we can write $\pp_\rho$ as convex combination 
\begin{equation}\label{eq:mixture}
\pp_\rho=\sum_{\alpha \in A} a_\alpha(\rho)\pp_\rho^\alpha=\sum_{\alpha \in A}\sum_{\beta \in I_\alpha} a_{\alpha,\beta}(\rho)\pp_\rho^{\alpha,\beta}.
\end{equation}
\end{lemma}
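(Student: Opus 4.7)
The plan is to reduce the claimed decomposition of $\pp_\rho$ to an operator identity, namely $\sum_{\alpha} A(\chi_\alpha)=1_\hh$ (and similarly $\sum_\beta A(\V_{\alpha,\beta})=A(\chi_\alpha)$), and then to interpret the martingales $(Y^\alpha_n)_n$, $(Y^{\alpha,\beta}_n)_n$ from Lemma \ref{lem:supermartingale} as Radon--Nikodym derivatives whose sum collapses to $\tr(\rho_n)=1$. Once this is established on the filtration, the equality of measures on all of $\mathcal F$ follows from a standard uniqueness-of-extension argument.

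First I would record the additivity property of absorption operators: if $\W_1,\dots,\W_k$ are mutually orthogonal enclosures, then $p_{\oplus_i \W_i}=\sum_i p_{\W_i}$, so by linearity and continuity of $\L^*$ one gets
\[
A\!\left(\bigoplus_i \W_i\right)=\lim_{n\to\infty}\L^{*n}\!\left(\sum_i p_{\W_i}\right)=\sum_i A(\W_i).
\]
Applying this to the finite decompositions $\R=\oplus_\alpha \chi_\alpha$ and $\chi_\alpha=\oplus_{\beta\in I_\alpha}\V_{\alpha,\beta}$ from \eqref{R-decomposition}, and using the already recalled identity $A(\R)=1_\hh$ (valid because $\hh$ is finite dimensional, so the slow recurrent subspace is trivial), I obtain
\[
\sum_{\alpha\in A} A(\chi_\alpha)=1_\hh,\qquad \sum_{\beta\in I_\alpha} A(\V_{\alpha,\beta})=A(\chi_\alpha),
\]
and consequently $\sum_{\alpha,\beta}A(\V_{\alpha,\beta})=1_\hh$.

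Next I would exploit these identities on the level of the martingales. By definition $Y^\alpha_n=\tr(A(\chi_\alpha)\rho_n)$, so taking traces against $\rho_n$ (which has trace one) gives $\sum_\alpha Y^\alpha_n=1$ pointwise, and analogously $\sum_{\alpha,\beta}Y^{\alpha,\beta}_n=1$. Fix $n\ge 0$ and $E\in\mathcal F_n$. From the formula \eqref{eq:density2} for the density restricted to $\mathcal F_n$ and the definition $a_\alpha(\rho)=\mathbb E_\rho[Y^\alpha_0]$,
\[
\sum_{\alpha} a_\alpha(\rho)\,\pp^\alpha_\rho(E)
=\sum_{\alpha}\mathbb E_\rho\!\left[Y^\alpha_n\mathbf 1_E\right]
=\mathbb E_\rho\!\left[\Big(\!\sum_\alpha Y^\alpha_n\Big)\mathbf 1_E\right]
=\pp_\rho(E),
\]
with the convention that any $\alpha$ having $a_\alpha(\rho)=0$ is simply dropped (the corresponding $Y^\alpha_n$ is then $\pp_\rho$-a.s.\ zero, so it contributes nothing). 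The same computation, with $\V_{\alpha,\beta}$ in place of $\chi_\alpha$, yields the finer decomposition.

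Finally, to upgrade from $\bigcup_n \mathcal F_n$ to all of $\mathcal F=\sigma(\bigcup_n\mathcal F_n)$, I would observe that $\bigcup_n\mathcal F_n$ is an algebra generating $\mathcal F$ and that both sides of \eqref{eq:mixture} are probability measures (the right-hand side is a convex combination of probability measures with weights summing to $\tr(\rho)=1$), so Carath\'eodory's uniqueness of extension forces the equality on the whole $\sigma$-algebra. There is no real obstacle here: the only substantive step is the additivity $A(\oplus_i\W_i)=\sum_i A(\W_i)$ for orthogonal enclosures, which is immediate from the linearity of $\L^{*n}$ and the orthogonality of the support projections; everything else is a direct reshuffling of the definitions.
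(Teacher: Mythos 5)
Your proof is correct and matches the paper's approach: both rely on the identity $\sum_{\alpha\in A} A(\chi_\alpha)=1_\hh$ (and its refinement $\sum_{\beta\in I_\alpha} A(\V_{\alpha,\beta})=A(\chi_\alpha)$), decompose $\pp_\rho$ on a generating class, and conclude by uniqueness of extension on $\mathcal F$. The paper carries out the computation on cylinder sets $\{(\underline{k},\underline{j})\}\times J^{\nn}$ while you do it via the Radon--Nikodym densities $Y^\alpha_n/a_\alpha(\rho)$ on $\mathcal F_n$, which is the same calculation in a slightly more abstract guise; your explicit derivation of the additivity $A(\oplus_i\W_i)=\sum_i A(\W_i)$ and your remark about dropping indices with $a_\alpha(\rho)=0$ merely spell out details the paper leaves implicit.
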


\begin{proof}
Indeed, for every $\underline{k} \in V$, $n \geq 0$, $\underline{j} \in J^n$
\[\begin{split}
\pp_\rho( \{(\underline{k}, \underline{j})\}\times J^\nn)&=\tr(L_{\underline{j}}\rho(\underline{k})L_{\underline{j}}^*)=\sum_{\alpha \in A}\tr(A(\chi_\alpha)L_{\underline{j}}\rho(\underline{k})L_{\underline{j}}^*)\\
&=\sum_{\alpha \in A} a_\alpha(\rho) \cdot \tr(L_{\underline{j}}\rho(\underline{k})L_{\underline{j}}^*) \frac{1}{\mathbb{E}_\rho[\tr(A(\chi_\alpha)\rho_0)]}\tr \left (A(\chi_\alpha)\frac{L_{\underline{j}}\rho(\underline{k})L_{\underline{j}}^*}{\tr(L_{\underline{j}}\rho(\underline{k})L_{\underline{j}}^*)} \right )\\
&=\sum_{\alpha \in A} a_\alpha(\rho) \pp_\rho^\alpha(\{(\underline{k}, \underline{j})\}\times J^\nn).
\end{split}
\]
where the second equality follows because $\sum_{\alpha \in A}A(\chi_\alpha)=1_\hh$. Similarly one can further decompose the probability measure in $\pp_\rho^{\alpha,\beta}$ because for every $\alpha \in A$,  $\sum_{\beta \in B\alpha}A(\V_{\alpha,\beta})=A(\chi_\alpha)$. Equation \eqref{eq:mixture} is then true because sets of the form $\{(\underline{k}, \underline{j})\}\times J^\mathbb{N}$ generate ${\cal F}$.
\end{proof}

Before proceeding forward, we can now complete the proof of Lemma \ref{lem:supermartingale} and deduce relations \eqref{eq:absdensity}.
\begin{proof} (of Lemma \ref{lem:supermartingale} - second part).\\
First notice the following set equivalence:
\[\left \{\frac{d\pp'_\rho}{d\pp_\rho}=\frac{1}{\mathbb{E}_\rho[Y_0]} \right \}=\left \{ Y_\infty=1\right \}, \quad \left \{\frac{d\pp'_\rho}{d\pp_\rho}=0 \right \}=\left \{Y_\infty=0\right \}.
\]
Let us denote by $q$ the orthogonal projection onto the eigenspace corresponding to the eigenvaule $1$ of $\AV$; since $0 \le \AV \le 1$, $Y_\infty=0$ ($Y_\infty=1$) if and only if $\lim_{n\rightarrow + \infty} \lVert \tilde{p}^\perp_\V \rho_n \tilde{p}^\perp_\V-\rho_n \rVert=0$ ($\lim_{n\rightarrow + \infty} q \rho_n q-\rho_n \rVert=0$). By \cite[Theorem 14]{CG}, we know that $q-p_\V \leq p_\TT$, hence to conclude we only need to show that $\lim_{n\rightarrow + \infty}\lVert  p_\TT \rho_n p_\TT \rVert=0$. Since $p_\TT$ is superharmonic, $T_n:=\tr(p_\TT \rho_n)$ is a supermartingale:
\[\mathbb{E}_{\rho}[T_{n+1}|{\cal F}_n]
=\sum_{i=1}^{v} \tr(L_i \rho_n L_i^*) \frac{ \tr(p_\TT L_i \rho_n L_i^*)}{ \tr(L_i \rho_n L_i^*)}
=\tr(\L^*(p_\TT)\rho_n )\leq \tr(p_\TT \rho_n)=T_n.
\]
Furthermore $0 \le T_n \le 1$, hence $T_n$ converges $\pp_\rho$-a.s. to a certain limit $T_\infty$. Notice that $\mathbb{E}_\rho[T_\infty]= \lim_{n \rightarrow +\infty}\mathbb{E}_\rho[T_n]= \lim_{n \rightarrow +\infty}\L^{*n}(p_\TT)=0$, hence $T_\infty=0$, which implies that $\lim_{n\rightarrow + \infty}\lVert  p_\TT \rho_n p_\TT \rVert=0$.
\end{proof}
{\bf Generalized Central Limit Theorem} 

The convergence in law is metrizable by different distances. On this subject, we refer for instance to \cite{Du}. Among them, we choose the Fortet-Mourier metric, but the convergence results keep holding true also with a different choice.
Let us denote by BL the set of bounded Lipschitz functions on $\rr^d$ equipped with the norm
\[\lVert f \rVert_{BL}= \sup_{x \in \rr^d} |f(x)|+ \sup_{x\neq y} \frac{|f(x)-f(y)|}{| x-y |};
\]
we introduce the Fortet-Mourier distance between two probability laws $P,Q$ on $\rr^d$,
\[{\rm dist}(P,Q):=\sup \left \{\Big\lvert \int_{\rr^d} f dP-\int_{\rr^d} f dQ \Big\rvert : f\in BL, \lVert f \rVert_{BL} \leq 1\right \}.
\]
We recall that \cite[Theorem 11.3.3]{Du}, for $P_n$, $P$ probability measures on $\rr^d$, the following fact holds
$$ P_n  \rightarrow P \mbox{ in law }
\qquad \mbox{if and only if} \qquad
{\rm dist}(P_n,P)\rightarrow 0.$$

We are now in a position to state the ``generalized Central Limit Theorem''.

\begin{theorem}  \label{thm:clt} 
{\bf Convergence to mixture of Gaussians.}\\
Take $m_\alpha$ and $D_{\alpha}$  as in Lemma \ref{lemma-m-D} and let $\pp_{\rho,n}$ be the law of the process $\frac{X_n-X_0}{\sqrt{n}}$ under $\pp_\rho$. Then
\[
\lim_{n \rightarrow +\infty} {\rm dist}\left (\pp_{\rho,n},\sum_{\alpha \in A} a_{\alpha}(\rho) {\mathcal N}(\sqrt{n} m_\alpha,D_{\alpha}) \right )=0,
\]
where $a_{\alpha}(\rho)=\mathbb{E}_\rho[\tr(A(\chi_\alpha)\rho_0)]$ and ${\mathcal N}(\sqrt{n} m_\alpha,D_{\alpha})$ denotes the Gaussian measure with mean $\sqrt{n} m_\alpha$ and covariance matrix $D_{\alpha}$.
\end{theorem}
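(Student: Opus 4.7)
The plan is to reduce the mixture CLT to Theorem \ref{th:singleCTL} applied to each minimal enclosure $\V_{\alpha,\beta}$, glue the pieces together using Lemma \ref{lem:dis} and the convexity of the Fortet--Mourier distance, and observe that the parameters $m(\V_{\alpha,\beta})$ and $D(\V_{\alpha,\beta})$ collapse to $m_\alpha$, $D_\alpha$ by Lemma \ref{lemma-m-D}. Writing $\pp_{\rho,n}^{\alpha,\beta}$ for the law of $(X_n-X_0)/\sqrt n$ under $\pp_\rho^{\alpha,\beta}$ (and omitting the indices for which $a_{\alpha,\beta}(\rho)=0$), Lemma \ref{lem:dis} gives
$$
\pp_{\rho,n}=\sum_{\alpha\in A}\sum_{\beta\in I_\alpha}a_{\alpha,\beta}(\rho)\,\pp_{\rho,n}^{\alpha,\beta}.
$$
Since $\sum_{\beta\in I_\alpha}A(\V_{\alpha,\beta})=A(\chi_\alpha)$ we also have $\sum_{\beta\in I_\alpha}a_{\alpha,\beta}(\rho)=a_\alpha(\rho)$, so the target measure can be re-indexed as
$$
\sum_{\alpha\in A} a_\alpha(\rho)\,\mathcal N(\sqrt n\, m_\alpha,D_\alpha)=\sum_{\alpha\in A}\sum_{\beta\in I_\alpha} a_{\alpha,\beta}(\rho)\,\mathcal N(\sqrt n\, m_\alpha,D_\alpha).
$$

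Next, the Fortet--Mourier distance is jointly convex in its two arguments, as is immediate from its definition as a supremum of absolute values of linear functionals: for any weights $(\lambda_i)$ with $\sum_i\lambda_i=1$ and any probability measures $P_i,Q_i$,
$$
{\rm dist}\Big(\sum_i\lambda_i P_i,\sum_i\lambda_i Q_i\Big)\le \sum_i\lambda_i\,{\rm dist}(P_i,Q_i).
$$
Applied to the two decompositions above, and using the finiteness of the family $\{(\alpha,\beta)\}$, this reduces the proof to showing that for every pair with $a_{\alpha,\beta}(\rho)>0$
$$
{\rm dist}\big(\pp_{\rho,n}^{\alpha,\beta},\,\mathcal N(\sqrt n\, m_\alpha,D_\alpha)\big)\xrightarrow[n\to\infty]{}0.
$$

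Each $\V_{\alpha,\beta}$ is a minimal enclosure, so Theorem \ref{th:singleCTL} applies under $\pp_\rho^{\alpha,\beta}$ and gives $(X_n-X_0-n\,m(\V_{\alpha,\beta}))/\sqrt n\to \mathcal N(0,D(\V_{\alpha,\beta}))$ in law; by Lemma \ref{lemma-m-D}, $m(\V_{\alpha,\beta})=m_\alpha$ and $D(\V_{\alpha,\beta})=D_\alpha$, so the limit depends on $\alpha$ alone. By \cite[Theorem 11.3.3]{Du}, this weak convergence is equivalent to the Fortet--Mourier convergence of the law of $(X_n-X_0-nm_\alpha)/\sqrt n$ towards $\mathcal N(0,D_\alpha)$; since the map $f\mapsto f(\,\cdot\,+\sqrt n\, m_\alpha)$ is an isometry of the bounded Lipschitz unit ball, the Fortet--Mourier distance is invariant under a common shift of both measures, and the latter quantity is exactly ${\rm dist}(\pp_{\rho,n}^{\alpha,\beta},\mathcal N(\sqrt n\, m_\alpha,D_\alpha))$, which therefore vanishes in the limit. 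The only real subtlety in the whole argument is the shift by the $n$-dependent vector $\sqrt n\, m_\alpha$: it prevents a naive appeal to weak convergence to a fixed limit, but it is handled cleanly by this translation invariance of any integral-probability metric; the rest is a finite convex combination.
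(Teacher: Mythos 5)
Your proof is correct and follows essentially the same route as the paper: the single-enclosure CLT (Theorem \ref{th:singleCTL}), the decomposition of $\pp_\rho$ from Lemma \ref{lem:dis}, the $\alpha$-only dependence of the parameters from Lemma \ref{lemma-m-D}, and the translation invariance of the Fortet--Mourier distance to absorb the $n$-dependent shift $\sqrt n\,m_\alpha$. The only cosmetic difference is that the paper first aggregates over $\beta\in I_\alpha$ to pass from $\pp_\rho^{\alpha,\beta}$ to $\pp_\rho^\alpha$ and then over $\alpha$, whereas you treat the full family of pairs $(\alpha,\beta)$ in one convexity step; both arguments are identical in substance.
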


\begin{proof} 
By Theorem \ref{th:singleCTL}, we know that the process $\frac{X_n-X_0-nm_\alpha}{\sqrt{n}}$ converges in law to a centered normal distribution with covariance matrix $D_\alpha$ under the measure $\pp_\rho^{\alpha,\beta}$, so that we can write
$$
\lim_{n \rightarrow +\infty}{\rm dist} \left (\pp_\rho^{\alpha,\beta} \left (\frac{X_n-X_0-nm_\alpha}{\sqrt{n}}\right ),{\mathcal N}(0,D_\alpha)\right )
=0.
$$
By definition, the Fortet-Mourier distance is invariant with respect to translations and consequently we deduce
$$
{\rm dist}\left (\pp_\rho^{\alpha,\beta} \left (\frac{X_n-X_0}{\sqrt{n}} \right ),{\mathcal N}(\sqrt n m_\alpha,D_\alpha) \right )
\longrightarrow 0, \qquad \mbox{as } n\rightarrow +\infty.
$$
Now, since this limit does not depend on $\beta$ and, by equation \eqref{eq:mixture}
$\pp_\rho^\alpha=\sum_{\beta \in I_\alpha} a_{\alpha,\beta}(\rho)\pp_\rho^{\alpha,\beta}$
(we denote by ${\mathcal N}_\alpha$ the law ${\mathcal N}(\sqrt n m_\alpha,D_\alpha) $ to shorten the expressions in this proof),
\begin{eqnarray*}
&&{\rm dist}\left (\pp_\rho^{\alpha}\left (\frac{X_n-X_0}{\sqrt{n}} \right ),{\mathcal N}_\alpha\right )=\\
&&=\sup \left \{\Big\lvert \int_{\rr^d} f \left (\frac{X_n-X_0}{\sqrt{n}} \right ) d\pp_\rho^{\alpha}
-\int_{\rr^d} f d{\mathcal N}_\alpha
\Big\rvert : f\in BL, \lVert f \rVert_{BL} \leq 1\right \}
 \\
 &&\le \sum_{\beta \in I_\alpha} a_{\alpha,\beta}(\rho)
 \sup \left \{\Big\lvert \int_{\rr^d} f \left (\frac{X_n-X_0}{\sqrt{n}} \right ) d\pp_\rho^{\alpha,\beta}
-\int_{\rr^d} f d{\mathcal N}_\alpha
 \Big\rvert : f\in BL, \lVert f \rVert_{BL} \leq 1\right \}
 \\
 &&= \sum_{\beta \in I_\alpha} a_{\alpha,\beta}(\rho) {\rm dist} \left (\pp_\rho^{\alpha,\beta}\left (\frac{X_n-X_0-nm_\alpha}{\sqrt{n}}\right ),{\mathcal N}(0,D_\alpha)\right )
\longrightarrow 0,
\qquad \mbox{as } n\rightarrow +\infty.
\end{eqnarray*}
Similarly, always by relation \eqref{eq:mixture},
$\pp_\rho=\sum_{\alpha\in A} a_{\alpha}(\rho)\pp_\rho^{\alpha}$ and by triangular inequality for any $f$ in $BL$, we can 
call $\nu_n=\sum_{\alpha \in A} a_{\alpha}(\rho) {\mathcal N}_{\alpha}$ and
write
$$
\Big\lvert \int_{\rr^d} f \left (\frac{X_n-X_0}{\sqrt{n}}\right) d\pp_\rho
-\int_{\rr^d} f d\nu_n
 \Big\rvert 
 \le
 \sum_{\alpha \in A} a_{\alpha}(\rho)  
\Big\lvert \int_{\rr^d} f \left (\frac{X_n-X_0}{\sqrt{n}}\right) d\pp_\rho^{\alpha}
-\int_{\rr^d} f d{\mathcal N}_{\alpha}
 \Big\rvert 
$$
and we then conclude
\[{\rm dist} \left (\pp_{\rho,n},\nu_n \right ) \leq 
 \sum_{\alpha \in A}a_{\alpha}(\rho)  {\rm dist} \left (\pp^{\alpha}_{\rho, n},
{\mathcal N}_{\alpha}\right),
\]
which converges to $0$ as $n \rightarrow +\infty$.
\end{proof}
Notice that, while the weights $a_{\alpha}(\rho)$ depend on the initial state and on the transient part of $\L$, the parameters of the Gaussian measures only depend on the restriction to the fast recurrent part.
Theorem \ref{thm:clt} has the following direct consequence on the convergence of the empirical means.

\begin{corollary} \label{coro:deltas}
Let $\hat{\pp}_{\rho,n}$ the law of the process $\frac{X_n-X_0}{n}$ under $\pp_\rho$, then
\[\lim_{n \rightarrow +\infty} {\rm dist} \left (\hat{\pp}_{\rho,n},\sum_{\alpha \in A}a_\alpha(\rho) \delta_{m_\alpha} \right )=0,
\]
where $a_\alpha(\rho)$ are defined as in previous theorem and $\delta_{m_\alpha} $ denotes the Dirac measure concentrated in $m_\alpha$.
\end{corollary}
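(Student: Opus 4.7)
The plan is to deduce this from Theorem \ref{thm:clt} via a simple scaling argument, using that the Fortet--Mourier distance is essentially non-expansive under shrinking maps and that narrow Gaussians collapse to Dirac masses.

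First I would observe that $(X_n - X_0)/n$ is the image of $(X_n - X_0)/\sqrt n$ under the linear contraction $T_n : x \mapsto x/\sqrt n$. A key remark is that if $f \in BL$ with $\lVert f \rVert_{BL}\le 1$, then $g := f \circ T_n$ still satisfies $\lVert g \rVert_{BL}\le 1$: the sup norm is unchanged, while the Lipschitz constant is multiplied by $1/\sqrt n \le 1$. Hence for any two probability measures $P,Q$ on $\rr^d$ one has $\operatorname{dist}(T_n^{\ast}P, T_n^{\ast}Q) \le \operatorname{dist}(P,Q)$. Applying $T_n^{\ast}$ to both arguments in Theorem \ref{thm:clt}, and noting that $T_n^{\ast}\mathcal{N}(\sqrt n m_\alpha, D_\alpha) = \mathcal{N}(m_\alpha, D_\alpha/n)$, I get
\[
\operatorname{dist}\!\left(\hat\pp_{\rho,n},\; \sum_{\alpha \in A} a_\alpha(\rho)\,\mathcal{N}(m_\alpha, D_\alpha/n)\right)
\longrightarrow 0.
\]

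Next I would show that the intermediate mixture converges to the claimed mixture of Dirac masses. By the convexity argument already used at the end of the proof of Theorem \ref{thm:clt} (triangle inequality under the sup-over-$BL$-functions definition of $\operatorname{dist}$),
\[
\operatorname{dist}\!\left(\sum_{\alpha} a_\alpha(\rho)\,\mathcal{N}(m_\alpha, D_\alpha/n),\; \sum_{\alpha} a_\alpha(\rho)\,\delta_{m_\alpha}\right)
\le \sum_{\alpha} a_\alpha(\rho)\,\operatorname{dist}\!\bigl(\mathcal{N}(m_\alpha, D_\alpha/n),\, \delta_{m_\alpha}\bigr).
\]
Each individual term tends to $0$: if $Z_n \sim \mathcal{N}(m_\alpha, D_\alpha/n)$, then $Z_n \to m_\alpha$ in probability (the variance vanishes), so $Z_n$ converges in law to $\delta_{m_\alpha}$, and by \cite[Theorem 11.3.3]{Du} this is equivalent to convergence in Fortet--Mourier distance. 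Since $A$ is finite and $\sum_\alpha a_\alpha(\rho)\le 1$, the sum vanishes in the limit.

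A final triangle inequality combines the two bounds and yields the claim. I do not anticipate any real obstacle here: the argument is entirely elementary once Theorem \ref{thm:clt} is available. The only care needed is the verification that the rescaling $T_n$ does not inflate the $BL$-norm, which is the mild observation above; everything else is concatenation of convergences and the convexity of $\operatorname{dist}$ in each argument.
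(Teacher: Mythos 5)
Your proof is correct and fills in exactly the ``direct consequence'' that the paper leaves implicit: contract by $T_n(x)=x/\sqrt n$ (which does not increase the Fortet--Mourier distance since composition with a $1/\sqrt n$-Lipschitz map preserves the unit $BL$-ball), then observe $\mathcal N(m_\alpha,D_\alpha/n)\to\delta_{m_\alpha}$ weakly and use the same convexity bound and triangle inequality already employed in the proof of Theorem \ref{thm:clt}. No gap; this is the argument the authors are alluding to.
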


\begin{remark} \label{rmk:goqrw}
{\bf Possible Extensions}.
As for previous versions of central limit theorems for HOQRWs, we can extend our results to more general cases.

1. There is an immediate generalization of HOQRW obtained considering a change in the local state after a shift $s_i$ given by a quantum operation $\L_j$ with more than one Kraus operator, which is the case we considered ($\L_j(\cdot)=L_j \cdot L_j^*$). In this case it suffices to change the notation in the proof of Theorem \ref{thm:clt} to see that it still holds true.

2. Open quantum random walks have been defined also in continuous time (\cite{Pe}) and the central limit theorem for the position process has already been proved in \cite{BrH}, under the assumption of irreducibility of $\RR$. Theorem \ref{thm:clt} can be carried with some technical adaptations to the continuous time case.
\end{remark}

\begin{remark}\label{comparison}
\textbf{Comparison with previous results.} 
The first CLT for HOQRWs appeared in \cite{AGS} where the authors proved it by the use of Poisson equation and martingale techniques in the case $\RR$ irreducible.  Indeed, in \cite[Theorem 7.3]{AGS} they showed the convergence to different Gaussian measures under proper conditional probabilities and under assumptions which can be translated in our language to be
\\
- $\TT=\{0\}$,\\
- $\chi_\alpha$ is minimal for every $\alpha \in A$,\\
- $m_\alpha \neq m_{\alpha^\prime}$ if $\alpha \neq \alpha^\prime$.

These techniques revealed to be successful to treat also other walks and in particular have recently been exploited also in \cite{KPS} to obtain a CLT for the so-called lazy OQWs.
Successively, in \cite{CP}, an alternative proof of the central limit theorem for an irreducible fast recurrent local channel $\L$ could be deduced from a large deviation principle, proved by deformation techniques. Finally the results in \cite{KoYo, KKSY,KonYo} (which are formulated in the setting of homogeneous open quantum walks on crystal lattices) state a kind of convergence to a mixture of Gaussian measures, under some conditions, always essentially implying that the local channel is fast recurrent. 

Here, with Theorem \ref{thm:clt}, we can find an improvement of all these previous results since we can drop any condition about recurrence or transience or reducibility of the local channel and we can specify the form of convergence to the mixture of Gaussians introducing a metric on the set of probability measures.
Moreover we can specify the weights of the limit mixture in terms of the initial state and of the decomposition of the local space.

We refer the reader to \cite{PS2} for other hints on the existing literature until 2019 and to \cite{BrH,Pe,Pas} for CLT results for different families of open walks.
\end{remark}

\section{Large Deviations} \label{sec:LDP}

When the central limit theorem is approached by Bryc's theorem, it is often treated together with large deviations, and this was indeed the idea in \cite{CP}, where the proof of the central limit theorem in the particular case of an irreducible fast recurrent subspace was a byproduct of the large deviation principle.
Similarly, it is here natural to wonder whether a large deviation principle can hold in general for the position process of a HOQRW, always under the measure $\pp_\rho$ induced by the initial state $\rho$.
We shall prove that G\"artner-Ellis' theorem can be applied and thus large deviations hold when the local map is recurrent. Moreover, the rate function is related to the spectrum of the deformed map $\L_u$. When instead there is a non-trivial transient subspace for the local channel $\L$, the limit of the moment generating functions is not smooth in general, as \cite[Example 7.3]{CP} shows, and G\"artner-Ellis' theorem will simply provide lower and upper bounds.

As for the results in the previous section, only the minimal enclosures in the decomposition of $\R$ that are ``reachable'' by a initial state $\rho$ will play a role in the large deviations results. 
For this reason, it is useful to remember the definition of the quantities 
$a_\alpha(\rho)$, $a_{\alpha,\beta}(\rho)$ (introduced in Lemma \ref{lem:dis}), which are a kind of quantum absorption probabilities of the evolution in the enclosures $\chi_\alpha$, or $\V_{\alpha,\beta}$ respectively, when the initial state is $\rho$.
Differently from the central limit type results, here also the index $\beta$, and so the particular enclosures $\V_{\alpha,\beta}$ selected inside $\chi_\alpha$ are important, and this is related to the fact that the evolution on the transient subspace affects large deviations results.

Since we need to define restrictions of the channel $\L$ which take into account only proper subspaces reachable by the local initial states $\rho(\underline{k})$, we define the subspace
$$
{\cal E}(\rho):= {\rm span} \{{\rm supp}(\L^n(\rho(\underline{k}))), \underline{k}\in V, n\ge 0\}
\subset \hh,
$$
which is an enclosure due to \cite[Propositions 4.1 and 4.2]{PC}.

We recall that by $\hat{\pp}_{\rho,n}$ we denote the law of $\frac{X_n-X_0}{n}$ under $\pp_\rho$ and, given any enclosure $\V$, $\tilde{p}_\V$ is the orthogonal projection onto ${\rm supp}(\AV)$.

\begin{theorem} {\bf Large deviation principle.} \label{thm:ld}
Suppose that the local map $\L$ is recurrent, i.e. $\R=\hh$. Then  $(\hat{\pp}_{\rho,n})_{n\ge 1}$ satisfies a large deviation principle with good rate function
\[\Lambda_\rho(x)=\min_{\alpha \,: \, a_\alpha(\rho)\neq 0}\Lambda_\alpha(x),\]
where $\Lambda_\alpha$ is the Fenchel-Legendre transform of the logarithm of the spectral radius $\lambda_{\alpha,u}$ of $\L_{|\chi_\alpha,u}$, i.e.
$$
\lambda_{\alpha,u} = r(\L_{|\chi_\alpha,u}), \qquad
\Lambda_\alpha(x)=\sup_{u \in \mathbb{R}^d}\{ \langle u,x \rangle -\log(\lambda_{\alpha,u})\}
\quad x\in \mathbb{R}^d.
$$
\end{theorem}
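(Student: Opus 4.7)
The plan is to apply G\"artner-Ellis' theorem separately to each of the sub-probabilities $\pp^\alpha_\rho$ in the mixture decomposition \eqref{eq:mixture} and then to combine the resulting large deviation principles via a standard mixture argument. The first step is to compute the moment generating function of $X_n - X_0$ under $\pp^\alpha_\rho$. Since $\R = \hh$, Proposition \ref{prop:abs}(4), together with the fact that $\chi_\alpha^\perp = \oplus_{\alpha' \neq \alpha}\chi_{\alpha'}$ is itself an enclosure, forces $A(\chi_\alpha) = p_{\chi_\alpha}$, and $\sum_\alpha p_{\chi_\alpha} = 1_\hh$. Exploiting that both $\chi_\alpha$ and $\chi_\alpha^\perp$ are enclosures for every deformation $\L_u$ (multiplication of the Kraus operators by the positive scalars $e^{u\cdot s_i/2}$ does not alter the enclosure structure) and that the off-diagonal blocks $p_{\chi_\alpha}(\cdot)p_{\chi_{\alpha'}}$ carry no trace, a direct adaptation of the moment generating function computation in the proof of Theorem \ref{th:singleCTL} gives
\[
a_\alpha(\rho)\,\mathbb{E}^\alpha_\rho\!\left[e^{\langle u, X_n - X_0\rangle}\right] = \sum_{\underline{k} \in V} \tr\!\big(\L_{|\chi_\alpha, u}^n(p_{\chi_\alpha}\rho(\underline{k})p_{\chi_\alpha})\big).
\]

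The spectral step identifies the asymptotic growth rate of this expression. The structural decomposition \eqref{unitary}-\eqref{psi} of $\L_{|\chi_\alpha}$ transfers (via the same unitary $U_\alpha$) to the deformation, so $\L_{|\chi_\alpha, u}$ is conjugate to $\id \otimes \psi_u$ on $B(\cc^{|I_\alpha|}) \otimes B(\V_{\alpha,\bar\beta})$, where $\psi_u$ is the deformation of the irreducible channel $\psi$. Consequently $\lambda_{\alpha,u} = r(\L_{|\chi_\alpha,u}) = r(\psi_u)$, and since the enclosure structure survives real deformation, $\psi_u$ stays irreducible for every $u \in \rr^d$. Theorem \ref{pf0} then yields that $\lambda_{\alpha,u}$ is the algebraically simple Perron eigenvalue of $\psi_u$, with strictly positive eigenvectors for both $\psi_u$ and its dual, and Kato's perturbation theory provides real-analyticity of $u \mapsto \log \lambda_{\alpha,u}$ on all of $\rr^d$ (convexity follows from H\"older-type estimates for log-spectral-radius of positive families). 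A Jordan expansion $\L_{|\chi_\alpha,u}^n(\sigma) = \lambda_{\alpha,u}^n\,\varphi_{\alpha,u}(\sigma)\,\tau_{\alpha,u} + o(\lambda_{\alpha,u}^n)$ (with the periodic adjustment already exploited at the end of the proof of Theorem \ref{th:singleCTL}), combined with the strict positivity of the dual Perron eigenvector which ensures $\varphi_{\alpha,u}\!\left(\sum_{\underline{k}} p_{\chi_\alpha}\rho(\underline{k})p_{\chi_\alpha}\right)>0$ whenever $a_\alpha(\rho) > 0$, gives
\[
\lim_{n \to \infty} \frac{1}{n} \log \mathbb{E}^\alpha_\rho\!\left[e^{\langle u, X_n - X_0\rangle}\right] = \log \lambda_{\alpha, u} \qquad \text{for every } u \in \rr^d.
\]
Analyticity and convexity make this limit essentially smooth, and G\"artner-Ellis' theorem delivers the full LDP for the law of $(X_n-X_0)/n$ under $\pp^\alpha_\rho$ with good rate function $\Lambda_\alpha$.

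To conclude, I would combine the finitely many LDPs obtained so far using $\pp_\rho = \sum_\alpha a_\alpha(\rho)\pp^\alpha_\rho$: one has $\hat\pp_{\rho,n}(O) \ge a_\alpha(\rho)\,\hat\pp^\alpha_{\rho,n}(O)$ for every open $O$ and every $\alpha$ with $a_\alpha(\rho) > 0$, and $\hat\pp_{\rho,n}(F) \le |A|\max_\alpha \hat\pp^\alpha_{\rho,n}(F)$ for every closed $F$. Taking $\limsup$/$\liminf$ and using the elementary identity $\min_\alpha \inf_{x \in E} \Lambda_\alpha(x) = \inf_{x \in E} \min_\alpha \Lambda_\alpha(x)$ yield both LDP bounds with rate $\Lambda_\rho(x) = \min_{\alpha:\,a_\alpha(\rho)>0} \Lambda_\alpha(x)$; goodness is inherited because $\Lambda_\rho$ is a minimum of finitely many good rate functions. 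The main obstacle I anticipate is the global analyticity of $\lambda_{\alpha,u}$: Lemma \ref{lem:analytic} covers only a complex neighborhood of the origin, so the argument has to rely on the tensor-product identification with $\psi_u$ and on the uniform preservation of irreducibility of $\psi_u$ under real deformation in order to push the Perron-Frobenius statements throughout $\rr^d$.
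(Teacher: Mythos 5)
Your overall strategy coincides with the paper's --- apply G\"artner--Ellis to each component of the mixture decomposition and combine. The main organizational difference is that the paper derives Theorem~\ref{thm:ld} as a corollary of Theorem~\ref{thm:ld1}, which itself works at the level of the $(\alpha,\beta)$-components $\V_{\alpha,\beta}$ (where $\L_{|\V_{\alpha,\beta},u}$ is irreducible with a \emph{simple} Perron root), and then in the recurrent case uses the identity $r(\L_{|\chi_\alpha,u})=r(\L_{|\V_{\alpha,\beta},u})$ coming from \eqref{psi} to pass to $\alpha$-indexed rate functions. You instead work directly with $\pp^\alpha_\rho$ and $\L_{|\chi_\alpha,u}$. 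Both routes work; the paper's buys the fact that the Perron root is algebraically simple at every stage (so the rank-one Jordan expansion is literally valid), while yours is slightly more self-contained but avoids a degeneracy that you do not quite address: when $|I_\alpha|>1$ the Perron eigenvalue $\lambda_{\alpha,u}$ of $\L_{|\chi_\alpha,u}\cong\id\otimes\psi_u$ has algebraic multiplicity $|I_\alpha|^2$, so the expansion you write, $\L_{|\chi_\alpha,u}^n(\sigma)=\lambda_{\alpha,u}^n\,\varphi_{\alpha,u}(\sigma)\,\tau_{\alpha,u}+o(\lambda_{\alpha,u}^n)$, is not a rank-one dominant term. The eigenvalue is still semisimple and the conclusion $\frac{1}{n}\log\mathbb{E}^\alpha_\rho[e^{u\cdot(X_n-X_0)}]\to\log\lambda_{\alpha,u}$ survives --- because the spectral projection has the form $\id\otimes(\varphi_u(\cdot)\tau_u)$ and one can choose a strictly positive dual Perron eigenvector $1\otimes w^\psi_u$, or, more robustly, argue exactly as in Lemma~\ref{lem:singleV} via the Gelfand formula for the upper bound and the existence of a positive $\Phi^*_u$-eigenvector (plus the finite-dimensionality inequality $p_{\cal Q}A(\V)p_{\cal Q}\geq M w_u$) for the lower bound --- but the claim as stated should be corrected. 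Everything else (the tensor identification $\L^*_{|\chi_\alpha,u}\cong\id\otimes\psi_u$, the preservation of irreducibility of $\psi_u$ under real deformation, the global analyticity of $u\mapsto\log r(\psi_u)$ via the irreducible Perron--Frobenius theorem, the mixture bounds and the goodness of $\min_\alpha\Lambda_\alpha$) matches the paper and is correct. Two minor points: in the closed-set bound you should restrict the maximum to $\alpha$ with $a_\alpha(\rho)>0$, and the paper takes the analyticity of $u\mapsto\lambda_{\alpha,\beta,u}$ on all of $\rr^d$ from \cite[Lemma 5.3]{CP} rather than rederiving it, which is worth citing.
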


\begin{theorem} {\bf Large deviations upper and lower bounds.} 
\label{thm:ld1}
For any measurable $B \in \mathcal{B}(\mathbb{R}^d)$
\begin{itemize}
\item $\limsup_{n \rightarrow +\infty} \frac{1}{n} \log(\hat{\pp}_{\rho,n}(B)) 
\leq - \inf_{x \in \overline{B}} \min_{\alpha ,\beta \,:\, a_{\alpha,\beta}(\rho)\neq 0}\Lambda_{\alpha,\beta}^\rho(x)$,
\item $\liminf_{n \rightarrow +\infty} \frac{1}{n} \log(\hat{\pp}_{\rho,n}(B)) 
\geq  -\min_{\alpha ,\beta \,:\, a_{\alpha,\beta}(\rho)\neq 0}
 \inf_{x \in \mathring{B} \cap {{\cal S}_{\alpha,\beta}}}\Lambda_{\alpha,\beta}^\rho(x)$
\end{itemize}
where
\begin{itemize}
\item $\lambda^\rho_{\alpha,\beta,u}=r(\L_{|{\cal Q}_{\alpha,\beta}^\rho,u})$ for ${\cal Q}^\rho_{\alpha,\beta}:=\tilde{p}_{\V_{\alpha,\beta}} {\cal E}(\rho)$,
\item $\Lambda^\rho_{\alpha,\beta}(x)=\sup_{u \in \mathbb{R}^d}\{ \langle u,x \rangle -\log(\lambda^\rho_{\alpha,\beta,u})\}$
is the Fenchel-Legendre transform of $\log(\lambda^\rho_{\alpha,\beta,u})$,
\item ${\cal S}_{\alpha,\beta}=\rr^d$ if $\lambda^\rho_{\alpha,\beta,u}$ is smooth, otherwise ${\cal S}_{\alpha,\beta}$ is the set of exposed points of $\Lambda^\rho_{\alpha,\beta}$ (see \cite[Definition 2.3.3]{dembo2011large}).
\end{itemize}
\end{theorem}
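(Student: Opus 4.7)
The plan is to combine the mixture decomposition of Lemma \ref{lem:dis} with a componentwise application of G\"artner--Ellis' theorem, in the same spirit as the large deviation principle proved for the irreducible fast recurrent case in \cite{CP}. For each pair $(\alpha,\beta)$ with $a_{\alpha,\beta}(\rho)\neq 0$, I would first compute, exactly as at the beginning of the proof of Theorem \ref{th:singleCTL} but now with $\V=\V_{\alpha,\beta}$, the moment generating function of $X_n-X_0$ under $\pp^{\alpha,\beta}_\rho$,
\[
\mathbb{E}^{\alpha,\beta}_\rho[e^{u \cdot (X_n - X_0)}] = \frac{1}{a_{\alpha,\beta}(\rho)} \sum_{\underline{k} \in V} \tr\bigl(A(\V_{\alpha,\beta})\, \tilde{\L}_u^n(\rho(\underline{k}))\bigr),
\]
where $\tilde{\L}_u$ is the deformation of the compression of $\L$ by $\tilde p_{\V_{\alpha,\beta}}$, following Section \ref{sec:singlegauss}.

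The next and key step is to establish, for every $u\in\rr^d$,
\[
\lim_{n\to+\infty}\frac{1}{n}\log\mathbb{E}^{\alpha,\beta}_\rho[e^{u\cdot(X_n-X_0)}]=\log\lambda^\rho_{\alpha,\beta,u}.
\]
Since each $\rho(\underline{k})$ is supported on ${\cal E}(\rho)$ and $A(\V_{\alpha,\beta})$ on $\tilde p_{\V_{\alpha,\beta}}$, only the behaviour of the deformed dynamics on ${\cal Q}^\rho_{\alpha,\beta} = \tilde p_{\V_{\alpha,\beta}}\,{\cal E}(\rho)$ contributes to the trace above. I would then invoke the Perron--Frobenius Theorem \ref{pf0} for $\L_{|{\cal Q}^\rho_{\alpha,\beta},u}$ and examine its Jordan form in the aperiodic case (with a cyclic decomposition in the periodic case, as done at the end of the proof of Theorem \ref{th:singleCTL}) to extract the dominant exponential rate $\lambda^\rho_{\alpha,\beta,u}$; the sub-dominant contributions produce only subexponential corrections to the logarithm.

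With this in hand, G\"artner--Ellis' theorem (see e.g.\ \cite[Theorem 2.3.6]{dembo2011large}) yields, under each $\pp^{\alpha,\beta}_\rho$, the upper bound $\limsup_n\frac{1}{n}\log\hat\pp^{\alpha,\beta}_{\rho,n}(B)\le -\inf_{x\in\overline B}\Lambda^\rho_{\alpha,\beta}(x)$ and the lower bound $\liminf_n\frac{1}{n}\log\hat\pp^{\alpha,\beta}_{\rho,n}(B)\ge -\inf_{x\in\mathring B\cap{\cal S}_{\alpha,\beta}}\Lambda^\rho_{\alpha,\beta}(x)$. The restriction to the exposed points ${\cal S}_{\alpha,\beta}$ cannot in general be removed since, when $\L$ has a non-trivial transient part intersecting ${\cal Q}^\rho_{\alpha,\beta}$, the function $u\mapsto\log\lambda^\rho_{\alpha,\beta,u}$ may fail to be differentiable, as illustrated by \cite[Example 7.3]{CP}. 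The global bounds of the statement then follow from the mixture formula $\hat\pp_{\rho,n}(B)=\sum_{\alpha,\beta}a_{\alpha,\beta}(\rho)\hat\pp^{\alpha,\beta}_{\rho,n}(B)$ and the finiteness of the set of indices: the logarithm of a finite sum of positive sequences has asymptotic rate equal to the maximum of the component rates, which gives the upper bound after exchanging $\min$ and $\inf$; for the lower bound one retains a single term $\hat\pp_{\rho,n}(B)\ge a_{\alpha,\beta}(\rho)\hat\pp^{\alpha,\beta}_{\rho,n}(B)$ and then maximizes over $(\alpha,\beta)$ with $a_{\alpha,\beta}(\rho)\ne 0$.

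The hardest part is exactly the spectral limit of the moment generating function: in contrast with Theorem \ref{th:singleCTL}, where the dynamics restricted to a minimal enclosure is irreducible, here ${\cal Q}^\rho_{\alpha,\beta}$ typically contains a non-trivial transient component inherited from ${\cal E}(\rho)$, so $\L_{|{\cal Q}^\rho_{\alpha,\beta},u}$ is reducible and its peripheral spectrum may carry non-trivial Jordan blocks. One must carefully argue that the trace against the harmonic operator $A(\V_{\alpha,\beta})$, which asymptotically suppresses contributions not absorbed in $\V_{\alpha,\beta}$, combined with the support condition on the initial state through ${\cal E}(\rho)$, still isolates exactly $\lambda^\rho_{\alpha,\beta,u}$ as the relevant growth rate rather than some larger eigenvalue of the full $\tilde{\L}_u$.
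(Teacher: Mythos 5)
Your outline matches the paper's: a mixture decomposition over $(\alpha,\beta)$, a componentwise application of G\"artner--Ellis, and a final combination exploiting that the index set is finite. The final combination step you describe (retain a single term for the lower bound, $\log$ of a finite sum of positive sequences has rate equal to the maximal componentwise rate for the upper bound, then exchange $\min$ and $\inf$) is exactly what the paper does in its Step 2.

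The genuine gap is in the central spectral limit
\[
\lim_{n\to+\infty}\frac{1}{n}\log\mathbb{E}^{\alpha,\beta}_\rho\!\left[e^{u\cdot(X_n-X_0)}\right]=\log\lambda^\rho_{\alpha,\beta,u},
\]
which you flag as the hardest part but do not close. Your proposed ``Jordan form + subexponential corrections'' argument (mimicking the proof of Theorem \ref{th:singleCTL}) is adequate for the upper bound, but it does not deliver the lower bound when $\Phi_u:=\L_{|{\cal Q}^\rho_{\alpha,\beta},u}$ is reducible: a priori the coefficient of the leading eigenvalue in the expansion of $\tr\bigl(A(\V_{\alpha,\beta})\,\Phi_u^n(\rho(\underline{k}))\bigr)$ could vanish, and positivity alone does not rule that out without a further argument. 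The paper resolves this differently. For the upper bound it bounds the trace by $\lVert\Phi_u^{*n}\rVert_\infty$ and applies Gelfand's formula, with no Jordan analysis. For the lower bound it takes the Perron--Frobenius eigenvector $w_u$ of the dual $\Phi_u^*$, shows that some $\L^N(\rho(\hat{\underline{k}}))$ pairs nontrivially with $w_u$, and then uses the finite-dimensional domination $p_{{\cal Q}}\,A(\V_{\alpha,\beta})\,p_{{\cal Q}}\ge M w_u$ (valid because $p_{\cal Q}A(\V)p_{\cal Q}$ is strictly positive on ${\cal Q}\subset{\rm supp}(A(\V))$) to get $\tr(A(\V)\Phi_u^n(\rho(\hat{\underline{k}})))\gtrsim(\lambda^\rho_{\alpha,\beta,u})^n$. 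This positivity/domination argument is the missing ingredient in your proposal.

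Two further points. First, your claim that ``only the behaviour on ${\cal Q}^\rho_{\alpha,\beta}$ contributes'' is the right intuition but needs the structural facts established in the paper's Step 1, namely the orthogonal splitting ${\rm supp}(A(\V_{\alpha,\beta}))={\cal Q}^\rho_{\alpha,\beta}\oplus\bigl({\cal E}(\rho)^\perp\cap{\rm supp}(A(\V_{\alpha,\beta}))\bigr)$ and the inclusion $\V_{\alpha,\beta}\subset{\cal Q}^\rho_{\alpha,\beta}$ when $a_{\alpha,\beta}(\rho)>0$; these justify the identity $\Phi_u^n(\rho(\underline{k}))=\tilde{\L}_u^n(\rho(\underline{k}))$ on which the whole argument rests. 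Second, the cyclic/periodic decomposition you import from the proof of Theorem \ref{th:singleCTL} is unnecessary here: that was needed to establish convergence for \emph{complex} $u$ (Bryc), whereas G\"artner--Ellis only requires the limit for real $u$, and neither the Gelfand upper bound nor the Perron-eigenvector lower bound cares about periodicity.
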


\begin{remark}
We remark that, whenever $a_{\alpha,\beta}(\rho)\neq 0$, ${\cal Q}^\rho_{\alpha,\beta}$ is non trivial and
$$
{\cal Q}^\rho_{\alpha,\beta}=\V_{\alpha,\beta} \oplus (\TT \cap {\cal Q}^\rho_{\alpha,\beta})\subset {\rm supp}(A(\V_{\alpha,\beta}))
$$
(see the first step in the proof of Theorem \ref{thm:ld1}).
\end{remark}

We shall prove the two theorems in inverse order. The proof will request different steps and we shall proceed similarly as we did for central limit theorems, first considering the measure $\pp'_\rho$ associated with the absorption in a single minimal enclosure (Lemma \ref{lem:supermartingale}), and then generalizing using the expression of $\pp_\rho$ as a convex combination given in Lemma \ref{lem:dis}.

\begin{proof}[Proof of Theorem \ref{thm:ld1}]

{\bf Step 1.}
We fix the initial state $\rho$ and a minimal enclosure $\V$, whose corresponding absorption operator is denoted as usual by $\AV$. If $\mathbb{E}_\rho[\tr(\AV\rho_0)]>0$, we introduce the measure $\pp'_\rho$ as previously in Lemma \ref{lem:supermartingale}. This first step consists in proving large deviations bounds for the position process under the measure $\pp'_\rho$.

We need to consider a restriction of the channel $\L$ which takes into account only the subspace of ${\rm supp}\AV$ which is someway reachable by the local initial states $\rho(\underline{k})$. To this aim we use the enclosure ${\cal E}(\rho)$ and define the subspace 
$$
{\cal Q}=\tilde{p}_\V {\cal E}(\rho).
$$

\begin{enumerate}

\item 
${\cal Q} \oplus ({\cal E}(\rho)^\perp \cap {\rm supp}(\AV) ) = {\rm supp}(\AV)$.

Indeed, $v\in {\cal Q}^\perp \cap {\rm supp}(\AV)$ if and only if
$$ v \in {\rm supp}(\AV) \text{ and, } 
 \forall w\in {\cal E}(\rho), \; 
 0=\langle v, \tilde{p}_\V(w)\rangle = \langle \tilde{p}_\V(v), w \rangle =
\langle v, w \rangle,
$$
i.e. $v \in {\rm supp}(\AV) \cap {\cal E}(\rho)^\perp$.

\item $\mathbb{E}_\rho[\tr(\AV\rho_0)]=0$ if and only if ${\cal Q}=\{0\}$.

Since $\tr(\AV\rho_0)$ is a non negative random variable, it has zero mean if and only if it is almost surely null, that is
\begin{eqnarray*}
&\Leftrightarrow&
0= \tr(A(\V)\rho({\underline k}))=  \tr(\AV \L^n(\rho({\underline k}))) \quad \forall {\underline k}\in V\\
\mbox{(since $\L(\AV)=\AV$)} &\Leftrightarrow&
\tr(\AV \L^n(\rho({\underline k})))= 0 \quad \forall {\underline k}\in V,n\ge 0 \\
&\Leftrightarrow& {\tilde p}_\V({\rm supp} (\L^n(\rho({\underline k})))=\{0\} \quad \forall {\underline k},n 
\end{eqnarray*}
 
\item Otherwise $\mathbb{E}_\rho[\tr(\AV\rho_0)]>0$ and $\V \subset {\cal Q}$.

By using the same ideas as before,
if $\mathbb{E}_\rho[\tr(\AV\rho_0)]>0$, ${\cal Q}$ is non trivial and there exist some ${\underline k}\in V,n\ge 0$ such that 
$\tr(p_\V \L^n(\rho({\underline k}))) \neq 0$ and this implies
$$
\{0\}\neq p_\V({\cal E}(\rho)) = p_\V(p_{\cal R}({\cal E}(\rho)))
= p_\V(\R \cap {\cal E}(\rho))
$$
where the last equality follows from \cite[Proposition 23]{CG}.
So $(\R \cap {\cal E}(\rho))$ is a non null positive recurrent enclosure (as intersection of enclosures) and it is non orthogonal to $\V$, hence it contains a minimal enclosure ${\cal W}$ which is in the same $\chi_\alpha$ as $\V$ and is not orthogonal to $\V$. Then, by using the partial isometry $Q$ as in relation \eqref{op Q}, we deduce that 
$$
\V =p_\V({\cal W}) \subset {\tilde p}_\V ({\cal E}(\rho)) ={\cal Q}.
$$
\end{enumerate}

We call $\Phi$ the restriction of $\L$ to the subspace ${\cal Q}$, $\Phi(\sigma)=p_{\cal Q}\L(p_{\cal Q}\sigma p_{\cal Q})p_{\cal Q}$ and $\Phi_u$ its deformation.
${\cal Q}$ and consequently $\Phi$ obviously depend on the enclosure $\V$ and on the initial state $\rho$, but we do not need to highlight this in the notations. 

\begin{lemma} \label{lem:singleV}
Suppose $\mathbb{E}_\rho[\tr(\AV\rho_0)]>0$.
For any measurable $B \in \mathcal{B}(\mathbb{R}^d)$
\begin{itemize}
\item $\limsup_{n \rightarrow +\infty} \frac{1}{n} \log(\hat{\pp}^\prime_{\rho,n}(B)) \leq - \inf_{x \in \overline{B}}\Lambda(x)$;
\item $\liminf_{n \rightarrow +\infty} \frac{1}{n} \log(\hat{\pp}^\prime_{\rho,n}(B)) \geq  - \inf_{x \in \mathring{B} \cap {\cal S}}\Lambda(x)$
\end{itemize}
where
\begin{itemize}
\item $\Lambda$ is the Fenchel-Legendre transform of $\log(\lambda^\rho_u)$, 
\item $\lambda^\rho_u$ is the spectral radius of $\Phi_u$ ,
\item ${\cal S}=\rr^d$ if $\lambda^\rho_u$ is smooth, otherwise it corresponds to the set of exposed points of $\Lambda$.
\end{itemize}
\end{lemma}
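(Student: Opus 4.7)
The strategy is to apply the G\"artner-Ellis theorem (\cite[Theorem 2.3.6]{dembo2011large}) to $(X_n-X_0)/n$ under $\pp'_\rho$. All the work lies in identifying the limiting logarithmic moment generating function
$$h(u) = \lim_{n \to +\infty} \frac{1}{n} \log \mathbb{E}'_\rho[e^{u \cdot (X_n - X_0)}], \qquad u \in \rr^d,$$
and checking that $h(u) = \log \lambda^\rho_u$. Once this is done the two bounds in the statement are exactly what G\"artner-Ellis produces: the upper bound for closed sets with no further assumption on $h$, and the lower bound for open sets but restricted to the exposed points ${\cal S}$ of $\Lambda$ when $h$ fails to be $C^1$.

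Starting from the MGF expression already computed in the proof of Theorem~\ref{th:singleCTL},
$$\mathbb{E}'_\rho[e^{u \cdot (X_n - X_0)}] = \frac{1}{\mathbb{E}_\rho[\tr(A(\V)\rho_0)]} \sum_{\underline{k} \in V} \tr\bigl(A(\V)\,\tilde{\L}_u^n(\rho(\underline{k}))\bigr),$$
my next step would be to reduce the iteration of $\tilde{\L}_u$ to an iteration of $\Phi_u$ acting on $L^1({\cal Q})$. The crucial observation is that for every $v \in {\cal E}(\rho)$ one has $\tilde{p}_\V v = p_{\cal Q} v$: indeed, decomposing $\tilde{p}_\V v$ along the orthogonal splitting ${\rm supp}(A(\V)) = {\cal Q} \oplus ({\cal E}(\rho)^\perp \cap {\rm supp}(A(\V)))$ established in Step~1 of the proof of Theorem~\ref{thm:ld1}, the component on the second summand is annihilated by pairing with $v \in {\cal E}(\rho)$. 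Combining this identity with the fact that each Kraus operator $L_i$ preserves the enclosure ${\cal E}(\rho)$, one checks by induction that
$$\tr\bigl(A(\V)\,\tilde{\L}_u^n(\rho(\underline{k}))\bigr) = \tr\bigl(A(\V)\,\Phi_u^n(p_{\cal Q}\rho(\underline{k}) p_{\cal Q})\bigr), \qquad n \geq 0,$$
so, setting $\sigma_\rho := \sum_{\underline{k}} p_{\cal Q} \rho(\underline{k}) p_{\cal Q} \in L^1({\cal Q})_+$ (non-zero by the standing assumption $\mathbb{E}_\rho[\tr(A(\V)\rho_0)] > 0$),
$$\mathbb{E}'_\rho[e^{u \cdot (X_n - X_0)}] = \frac{\tr\bigl(A(\V)\,\Phi_u^n(\sigma_\rho)\bigr)}{\mathbb{E}_\rho[\tr(A(\V)\rho_0)]}.$$

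The asymptotic behaviour of $\Phi_u^n$ is then controlled by its spectral radius $\lambda^\rho_u = r(\Phi_u)$. By Theorem~\ref{pf0} applied to the positive map $\Phi_u$, $\lambda^\rho_u$ is an eigenvalue with a positive eigenvector $\tau_u \in L^1({\cal Q})_+$; a short check shows that $A(\V)$ is strictly positive on ${\cal Q}$ (since ${\cal Q} \subset {\rm supp}(A(\V))$ implies $\langle v, A(\V) v \rangle > 0$ for every $0 \neq v \in {\cal Q}$), hence $\tr(A(\V)\tau_u) > 0$. Combined with a non-triviality argument for the coefficient of $\tau_u$ in the spectral decomposition of $\sigma_\rho$ and standard Jordan-form estimates, one obtains
$$c_\rho(u)\,(\lambda^\rho_u)^n \le \tr\bigl(A(\V)\,\Phi_u^n(\sigma_\rho)\bigr) \le C_\rho(u)\, n^{k_u}(\lambda^\rho_u)^n$$
for suitable $c_\rho(u), C_\rho(u) > 0$, which yields $h(u) = \log \lambda^\rho_u$ in the aperiodic case. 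The periodic case is handled exactly as in the proof of Theorem~\ref{th:singleCTL}, by passing to $\Phi^l$ and its cyclic sub-blocks on which the iterate is aperiodic; this does not affect the $\tfrac{1}{n}\log$ limit. G\"artner-Ellis then delivers both claimed bounds.

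The main obstacle is that $\Phi$ is \emph{not} irreducible on ${\cal Q}$: the decomposition ${\cal Q} = \V \oplus (\TT \cap {\cal Q})$ mixes the minimal recurrent enclosure $\V$ with transient directions, so the spectral radius $\lambda^\rho_u$ need not coincide with the more tractable $\lambda_u = r(\L_{|\V,u})$ appearing in Theorem~\ref{th:singleCTL}. As a consequence the function $u \mapsto \log \lambda^\rho_u$ is only convex and need not be differentiable: for small $u$ it coincides with $\log \lambda_u$ (recovering the CLT regime), while for larger $u$ the transient block may take over and produce a corner. This non-smoothness is precisely what forces the lower bound to be restricted to the exposed points of $\Lambda$, and explains why the present statement is genuinely weaker than the full large deviations principle obtained in Theorem~\ref{thm:ld} when $\R = \hh$.
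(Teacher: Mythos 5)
Your overall strategy is the paper's: apply G\"artner--Ellis and identify the limit of $\tfrac1n\log\mathbb{E}'_\rho[e^{u\cdot(X_n-X_0)}]$ with $\log\lambda^\rho_u$, the reduction to $\Phi_u$ and the observation that the possible non-smoothness of $u\mapsto\log\lambda^\rho_u$ forces the restriction to exposed points are both on target. But there are two points where the proposal departs from, and falls short of, the paper's proof, and the second one is a genuine gap.

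First, the less serious point: you invoke ``standard Jordan-form estimates'' for the upper bound and propose to carry the periodic/aperiodic dichotomy over from the CLT proof. Neither is needed here. The paper gets the upper bound in one line from Gelfand's formula, $\limsup_n\frac1n\log\tr(\AV\Phi_u^n(\sigma)) \le \log\lim_n\lVert\Phi_u^{*n}\rVert_\infty^{1/n}=\log\lambda^\rho_u$, with no spectral decomposition at all; and since both bounds are established directly for every real $u$, the aperiodicity issue that forced the two-case treatment in Theorem~\ref{th:singleCTL} simply does not arise in this lemma.

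Second, and this is where the proposal breaks: your lower bound hinges on ``a non-triviality argument for the coefficient of $\tau_u$ in the spectral decomposition of $\sigma_\rho$''. That coefficient is $\tr(w_u\sigma_\rho)$ (up to normalization), where $w_u$ is the Perron--Frobenius eigenvector of $\Phi_u^*$, and there is no reason for it to be nonzero: $\sigma_\rho=\sum_{\underline k}p_{\cal Q}\rho(\underline k)p_{\cal Q}$ is positive on ${\cal Q}$ but in general not faithful there, so $w_u$ and $\sigma_\rho$ can very well have disjoint supports, and positivity of both operators buys you nothing without faithfulness. The paper circumvents this precisely by \emph{not} trying to pair $w_u$ against $\sigma_\rho$ directly: since $w_u$ is a nonzero positive operator supported in ${\cal Q}=\tilde p_\V{\cal E}(\rho)$, and ${\cal E}(\rho)$ is by construction spanned by the orbits $\L^n(\rho(\underline k))$, one can find $N$ and $\hat{\underline k}$ with $\tr\bigl(\Phi_u^N(\rho(\hat{\underline k}))\,w_u\bigr)\neq 0$. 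One then uses the finite-dimensional domination $p_{\cal Q}\AV p_{\cal Q}\ge M\,w_u$ to pass from $\AV$ to $w_u$ inside the trace, and the eigenvalue relation $\Phi_u^{*(n-N)}(w_u)=(\lambda^\rho_u)^{n-N}w_u$ then delivers $\tr(\AV\Phi_u^n(\rho(\hat{\underline k})))\ge M\,\tr(\Phi_u^N(\rho(\hat{\underline k}))w_u)\,(\lambda^\rho_u)^{n-N}$. The move from $\sigma_\rho$ to some \emph{propagated} state $\Phi_u^N(\rho(\hat{\underline k}))$ is the missing idea in your argument, and without it the lower bound does not go through.
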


\begin{proof}
In order to apply \cite[Theorem 2.3.6]{dembo2011large}, we need to prove that for every $u \in \rr^d$ we have 
\[
\lim_{n \rightarrow +\infty} \frac{1}{n}\log(\mathbb{E}^\prime _\rho[{\rm e}^{u \cdot X_n-X_0 )}])=\log(\lambda^\rho
_u).
\]
Notice that we computed the same limit in the proof of Theorem \ref{th:singleCTL}, but for $u$ in a complex neighborhood of the origin.

For any $n \in \mathbb{N}$, by construction $\Phi^n_u(\rho(\underline{k}))=\tilde\L^n_u(\rho(\underline{k}))$ for all $\underline{k}$ and $u$, so we can write

\[\begin{split}
\mathbb{E}_\rho[\tr(\AV \rho_0] \cdot \mathbb{E}^\prime_\rho[{\rm e}^{u \cdot (X_n-X_0)}]
&=\sum_{\underline{k} \in V} \tr(\AV\tilde{\L}_u^n({\rho}(\underline{k})))
=\sum_{\underline{k} \in V} \tr(\AV\Phi_u^n({\rho}(\underline{k})))\\
&=\sum_{\underline{k} \in V} \tr({\rho}(\underline{k}) \Phi_u^{*n}(\AV))\\
&\leq \|\sum_{\underline{k} \in V} {\rho}(\underline{k})\|_{L^1} \, \| \Phi_u^{*n}({\AV}))\|_\infty 
\leq  \lVert \Phi_u^{*n} \rVert_\infty.
\end{split}
\]
Because of Gelfand formula, we get
\[
\limsup_{n \rightarrow +\infty} \frac{1}{n}\log(\mathbb{E}^\prime_\rho[{\rm e}^{u \cdot (X_n-X_0 )}]) 
\leq \log\left (\lim_{n \rightarrow +\infty} \lVert \Phi_u^{*n} \rVert^{1/n}_\infty \right)
=\log(\lambda^\rho_u).
\]
Now consider $w_{u} \in B(\hh)$ the Perron-Frobenius eigenvector for $\Phi^*_u$, i.e. such that 
$\Phi^*_u(w_u)=\lambda^\rho_u w_u$. $w_u$ is a non null positive operator supported in ${\cal Q}$, so
there exist $N \in \nn$ and $\hat{\underline{k}}$ in $V$ such that 
$\tr(\tilde{\L}^N({\rho}(\hat{\underline{k}}))w_u) \neq 0$. Therefore $\tr(\Phi^N_{u}({\rho}(\hat{\underline{k}}))w_u)=\tr(\tilde{\L}^N_{u}({\rho}(\hat{\underline{k}}))w_u)\neq 0$. 

Since ${\cal Q}$ is finite dimensional, there exists a constant $M>0$ such that $p_{\cal Q}\AV p_{\cal Q} \geq M w_u$, hence for every $n \geq N$ we have
\[\begin{split}
\mathbb{E}_\rho[\tr(\AV \rho_0] \cdot \mathbb{E}^\prime_\rho[{\rm e}^{u \cdot (X_n-X_0)}] 
&= \sum_{\underline{k} \in V} \tr\left(\AV  \tilde{\L}_{u}^n({\rho}(\underline{k}))\right)\\
&\geq \tr\left(\AV  \tilde{\L}_{u}^n({\rho}(\hat{\underline{k}}))\right)\\
&= \tr\left(\AV  \Phi_{u}^n({\rho}(\hat{\underline{k}}))\right)\\
&\geq  M \tr\left(\Phi_{u}^N({\rho}(\hat{\underline{k}})) \Phi_{u}^{*(n-N)}(w_u)\right)\\
& = M\tr\left(\Phi_{u}^N({\rho}(\hat{\underline{k}}))w_u\right) (\lambda^\rho_{u})^{n-N}.
\end{split}\]
Therefore
\[\liminf_{n \rightarrow +\infty} \frac{1}{n}\log(\mathbb{E}^\prime_\rho[{\rm e}^{u \cdot (X_n-X_0)}]) \geq \log(\lambda^\rho_{u}).\]
This allows to compute the desired limit and the statement follows by direct application of the G\"artner-Ellis' theorem. Notice that we do not have to worry about the domain of $\log(\lambda_u^\rho)$ since it is easy to see that $\lambda_u^\rho$ is a strictly positive real number for every $u \in \rr^d$.
\end{proof}

{\bf Step 2.} 
We complete the proof of the statement of the theorem by using the expression of $\pp_\rho$ as convex combinations of the $\pp_\rho^{\alpha,\beta}$ deduced in relation \eqref{eq:mixture}.
This implies that a similar decomposition holds for $\hat{\pp}_{\rho,n}$ in terms of $(\hat{\pp}_{\rho,n}^{\alpha,\beta})_{\alpha,\beta}$, i.e.
\begin{equation*}
\hat{\pp}_{\rho,n}
=\sum_{\alpha \in A}\sum_{\beta \in I_\alpha} a_{\alpha,\beta}(\rho)\hat{\pp}_{\rho,n}^{\alpha,\beta}.
=\sum_{j\in J_{\rho}} a_{j}(\rho) \hat{\pp}_{\rho,n}^{j},
\end{equation*}
$$
\mbox{where}\qquad
J_\rho:=\left\{(\alpha,\beta): \alpha\in A, \beta\in I_\alpha:  a_{\alpha,\beta}(\rho)>0\right\}.
$$

Since, for any $j\in J_\rho$ and $B \in \mathcal{B}(\rr^d)$, $\hat{\pp}_{\rho,n}(B) \geq a_\alpha \hat{\pp}_{\rho,n}^{j}(B)$, we trivially have
\[\begin{split}
&\liminf_{n \rightarrow +\infty} \frac{1}{n} \log(\hat{\pp}_{\rho,n}(B))
\geq \max_{j\in J_\rho} \liminf_{n \rightarrow +\infty} \frac{1}{n} \log(\hat{\pp}_{\rho,n}^{j}(B)), 
\\
&\limsup_{n \rightarrow +\infty} \frac{1}{n} \log(\hat{\pp}_{\rho,n}(B))
\geq \max_{j\in J_\rho} \limsup_{n \rightarrow +\infty} \frac{1}{n} \log(\hat{\pp}_{\rho,n}^{j}(B)).
\\
\end{split}
\]
Then we have
\[\begin{split}
\limsup_{n \rightarrow +\infty} \frac{1}{n} \log(\hat{\pp}_{\rho,n}(B))
&\leq \underbrace{\limsup_{n \rightarrow +\infty} \frac{1}{n} \log(|J_\rho|)}_{=0}+ \limsup_{n \rightarrow +\infty} \frac{1}{n} \log \left (\max_{j\in J_\rho}\hat{\pp}_{\rho,n}^{j}(B)\right )\\
& =\max_{j\in J_\rho}\limsup_{n \rightarrow +\infty} \frac{1}{n} \log(\hat{\pp}_{\rho,n}^{j}(B))
\end{split}\]
and we are done.
\end{proof} 

\begin{proof} [Proof of Theorem \ref{thm:ld}]
Under the hypothesis $\hh=\R$, we have that $A(\V_{\alpha,\beta})=p_{\V_{\alpha,\beta}}$, which implies ${\cal Q}_{\alpha, \beta}=\V_{\alpha,\beta}$ and $\tilde{\L}_{\alpha,\beta,u}=\L_{|\V_{\alpha,\beta},u}$.

Since $\L_{|\V_{\alpha,\beta}}$ is irreducible, $\lambda_{\alpha,\beta,u}$ is an analytic function of $u \in \rr^d$ (\cite[Lemma 5.3]{CP}) and consequently $S_{\alpha,\beta}=\rr^d$.

Moreover recall (equation \eqref{psi}) that $\L^*_{|\chi_\alpha}$ is unitarily equivalent to ${\rm Id}_{B(\cc^{|I_\alpha|})} \otimes \psi$ where $\psi$ is equal to $\L^*_{|\V_{\alpha,\beta}}$, hence $\L_{|\chi_\alpha}$ and $\L_{|\V_{\alpha,\beta}}$ have the same spectral radius.

Therefore the following equality holds:
\[\min_{(\alpha,\beta) \in J_\rho}\Lambda_{\alpha,\beta}=\min_{\alpha: a_{\alpha}(\rho)\neq 0}\Lambda_\alpha.
\]
Theorem \ref{thm:ld1} ensures that  for any measurable $B \in \mathcal{B}(\mathbb{R}^d)$
\begin{itemize}
\item $\limsup_{n \rightarrow +\infty} \frac{1}{n} \log(\hat{\pp}_{\rho,n}(B)) \leq - \inf_{x \in \overline{B}} \min_{\alpha: a_{\alpha}(\rho)\neq 0}\Lambda_{\alpha}(x)$,
\item $\liminf_{n \rightarrow +\infty} \frac{1}{n} \log(\hat{\pp}_{\rho,n}(B)) \geq  - \inf_{x \in \mathring{B} } \min_{\alpha: a_{\alpha}(\rho)\neq 0}\Lambda_{\alpha}(x),$
\end{itemize}
which is exactly the definition of large deviation principle with rate function $\Lambda_\rho(x):=\min_{\alpha: a_{\alpha}(\rho)\neq 0}\Lambda_{\alpha}(x)$, $x\in \erre^{d}$. Note that $\Lambda_\rho$ has compact level sets because every $\Lambda_\alpha$ does (it is a consequence of G\"artner-Ellis' theorem).
\end{proof}

Consider a minimal enclosure $\V$ such that $\mathbb{E}_\rho[\tr(\AV \rho_0)] \neq 0$; taking the notations of the first step in the proof of Theorem \ref{thm:ld1}, the following proposition states that $\lambda^\rho_u$ can be seen as the result of two contributions: one depending on the recurrent dynamic on $\V$ and the other one on the transient dynamic on its orthogonal complement in ${\cal Q}$, which we denote by ${\cal W}:={\cal Q}\cap \TT$.

\begin{proposition}
\label{lambdaRT}
Let $\lambda_u^\V$ and $\lambda_u^{\cal W}$ be the spectral radii of $\Phi_{|\V,u}$ and $\Phi^*_{|{\cal W} ,u}$ respectively. Then $\lambda_u^\rho=r(\tilde{\L}_u)=\max \{\lambda_u^\V,\lambda_u^{\cal W}\}$.
\end{proposition}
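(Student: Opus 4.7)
The plan is to exploit a block decomposition induced by ${\cal Q}=\V\oplus{\cal W}$. Since $\V$ is a minimal enclosure for $\L$, the Kraus operators obey $L_i p_\V=p_\V L_i p_\V$, so the Kraus operators $M_i=p_{\cal Q} L_i p_{\cal Q}$ of $\Phi$ take the upper block-triangular form
\[
M_i=\begin{pmatrix} A_i & B_i \\ 0 & D_i\end{pmatrix},\qquad A_i=p_\V L_i p_\V,\; D_i=p_{\cal W} L_i p_{\cal W},\; B_i=p_\V L_i p_{\cal W},
\]
with respect to ${\cal Q}=\V\oplus{\cal W}$. Splitting $L^1({\cal Q})=E_{\V\V}\oplus E_{\V{\cal W}}\oplus E_{{\cal W}\V}\oplus E_{{\cal W}{\cal W}}$ with $E_{XY}:=p_X L^1({\cal Q})p_Y$, a direct computation of $\Phi_u(\sigma)=\sum_i e^{u\cdot s_i}M_i\sigma M_i^*$ shows that $\Phi_u$ preserves the flag
\[
E_{\V\V}\subset E_{\V\V}\oplus E_{\V{\cal W}}\subset E_{\V\V}\oplus E_{\V{\cal W}}\oplus E_{{\cal W}\V}\subset L^1({\cal Q}),
\]
with quotient operators equal respectively to $\Phi_{|\V,u}$, $T_{12,u}(y):=\sum_i e^{u\cdot s_i}A_iyD_i^*$, $T_{21,u}(w):=\sum_i e^{u\cdot s_i}D_iwA_i^*$, and $\Phi_{|{\cal W},u}$. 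Consequently $\lambda^\rho_u=r(\Phi_u)=\max\{\lambda^\V_u,\,r(T_{12,u}),\,r(T_{21,u}),\,\lambda^{\cal W}_u\}$, and the task reduces to showing $r(T_{12,u}),\,r(T_{21,u})\leq\max\{\lambda^\V_u,\lambda^{\cal W}_u\}$.

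The key idea (and main difficulty) is to replace $\Phi_u$ by the auxiliary positive map $\Psi_u$ on $L^1({\cal Q})$ with Kraus operators $e^{u\cdot s_i/2}N_i$, where $N_i:=A_i\oplus D_i$ is block-diagonal. A direct check gives, for $\sigma=\begin{pmatrix} x & y \\ y^* & z\end{pmatrix}\in L^1({\cal Q})$,
\[
\Psi_u(\sigma)=\begin{pmatrix} \Phi_{|\V,u}(x) & T_{12,u}(y) \\ T_{21,u}(y^*) & \Phi_{|{\cal W},u}(z)\end{pmatrix},
\]
so each of the four subspaces $E_{\V\V}$, $E_{\V{\cal W}}$, $E_{{\cal W}\V}$, $E_{{\cal W}{\cal W}}$ is invariant under $\Psi_u$, with restrictions precisely $\Phi_{|\V,u}$, $T_{12,u}$, $T_{21,u}$, $\Phi_{|{\cal W},u}$. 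In particular $r(\Psi_u)=\max\{\lambda^\V_u,r(T_{12,u}),r(T_{21,u}),\lambda^{\cal W}_u\}$, so it suffices to bound $r(\Psi_u)$ from above by $\max\{\lambda^\V_u,\lambda^{\cal W}_u\}$.

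To conclude, I would apply Perron--Frobenius (Theorem \ref{pf0}) to the positive map $\Psi_u$, valid for $u\in\rr^d$: its spectral radius $r(\Psi_u)$ is an eigenvalue with a positive eigenvector $\xi\geq 0$. Writing $\xi=\begin{pmatrix} x & y \\ y^* & z\end{pmatrix}$, positivity of $\xi$ forces $x\geq 0$, $z\geq 0$, and (by a standard Schur-complement argument) $y=0$ whenever $x=0$ or $z=0$. The diagonal block equations in $\Psi_u(\xi)=r(\Psi_u)\xi$ read
\[
\Phi_{|\V,u}(x)=r(\Psi_u)x,\qquad \Phi_{|{\cal W},u}(z)=r(\Psi_u)z,
\]
and since $\xi\neq 0$ at least one of $x$, $z$ is non-zero, so $r(\Psi_u)$ appears as an eigenvalue of $\Phi_{|\V,u}$ or of $\Phi_{|{\cal W},u}$ with a non-trivial positive eigenvector. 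Hence $r(\Psi_u)\leq\max\{\lambda^\V_u,\lambda^{\cal W}_u\}$; the reverse inequality is immediate because $\Phi_{|\V,u}$ and $\Phi_{|{\cal W},u}$ appear as restrictions of $\Psi_u$ to invariant blocks. Thus $r(\Psi_u)=\max\{\lambda^\V_u,\lambda^{\cal W}_u\}$ and a fortiori $r(T_{12,u}),r(T_{21,u})\leq r(\Psi_u)$, which completes the proof.
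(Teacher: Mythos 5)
Your proposal is correct, and it rests on the same three ingredients as the paper's own proof: Perron--Frobenius for the deformed map, positivity of the Perron eigenvector, and the block structure on $L^1({\cal Q})$ that comes from $\V$ being an enclosure (so that the Kraus operators $p_{\cal Q}L_ip_{\cal Q}$ are upper block-triangular with respect to $\V\oplus{\cal W}$). What differs is the packaging. The paper applies Perron--Frobenius directly to $\Phi_u$: if $\omega_u\geq 0$ is its Perron eigenvector and $\lambda_u^\rho>\lambda_u^\V$, positivity forces $p_{\cal W}\omega_u p_{\cal W}\neq 0$ (else $\omega_u$ would be supported in $\V$, making $\lambda_u^\rho$ an eigenvalue of $\Phi_{|\V,u}$), and the triangular Kraus form lets one compress the eigenvector equation to the ${\cal W}{\cal W}$-corner, giving $\lambda_u^\rho\leq\lambda_u^{\cal W}$; the reverse inequalities are read off from the block-triangular spectrum. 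You instead first decompose $r(\Phi_u)$ into the max of the four corner/quotient spectral radii via the flag argument, then introduce the auxiliary completely positive map $\Psi_u$ whose Kraus operators have the off-diagonal blocks $B_i$ removed, note that $\Psi_u$ is a direct sum of those same four corner maps, and run Perron--Frobenius plus the Schur positivity argument on $\Psi_u$ to bound the two off-diagonal corners. This is correct, but the detour through $\Psi_u$ is not needed: with $\Phi_u$ itself one can only compress the eigenvector equation to the ${\cal W}$-corner (not the $\V$-corner, because of the $B_i$-terms), and that one direction is already enough, exactly as the paper does. The one extra piece of information your route gives, which the statement does not require, is the explicit bound $r(T_{12,u}),\,r(T_{21,u})\leq\max\{\lambda_u^\V,\lambda_u^{\cal W}\}$ on the cross-block maps. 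Finally, note that the paper's $\lambda_u^{\cal W}$ is defined as the spectral radius of the dual restriction $\Phi^*_{|{\cal W},u}$, while you work with the compression $z\mapsto\sum_ie^{u\cdot s_i}D_izD_i^*$; these are adjoints of each other on a finite-dimensional space and so have the same spectral radius, so this mismatch is harmless.
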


\begin{proof}
We only need to prove that if $\lambda_u^{\rho} > \lambda_u^\V$, then $\lambda_u^{\rho} = \lambda_u^{\cal W}$. Theorem \ref{pf0} tells us that there exists a positive $\omega_u \in L^1({\cal Q})$ such that $\Phi_u(\omega_u)=\lambda^\rho_u \omega_u$; since $\lambda^\rho_u > \lambda_u^\V$, it must be true that $p_{\cal W} \omega_u p_{\cal W} \neq 0$ and we have the following:
\[p_{\cal W} \Phi_u(p_{\cal W} \omega_u p_{\cal W})p_{\cal W}=p_{\cal W} \Phi_u( \omega_u  )p_{\cal W}=\lambda_u p_{\cal W} \omega_u p_{\cal W}.
\]
The first equality follows from the fact that for any $\rho \in L^1(\hh)$
\[
\Phi_u(p_\V \rho )=p_{\cal Q} \L_u ( p_\V \rho p_{\cal Q} )p_{\cal Q}\overset{\text{ ($\V$ is an enclosure)}}{=}p_\V \L_u ( p_\V \rho p_{\cal Q} )p_{\cal Q} =p_\V \Phi_u(p_\V \rho)
\]
and analogously $\Phi_u( \rho p_\V)=\Phi(\rho p_\V)p_\V$.

\end{proof}

\section{Examples and numerical simulations} 
\label{sec:examples}

\subsection{Commuting normal local Kraus operators} \label{sub:comm}
As a first family of examples, we consider some HOQRWs studied in \cite{PS}: take $V=\zz^d$ and a local channel with normal commuting Kraus operators $\{L_j\}_{j=1}^{2d}$. In this case, there exists an orthonormal basis $\{\phi_i\}_{i=1}^h$ that simultaneously diagonalizes the Kraus operators and we can write $L_j=\sum_{i=1}^h \zeta_{i,j} \ket{\phi_i}\bra{\phi_i}$. 
The normalization condition for the operators $L_j$ given by equation (\ref{oqw1}) implies that $\sum_{j=1}^{2d} |\zeta_{i,j}|^2=1$ for any $i=1,\dots, h$. 

It is easy to verify by direct computation that, for every $i=1,\dots, h$, $\omega_i=\ket{\phi_i}\bra{\phi_i}$ is a minimal invariant state for $\L$, and consequently $\V_i:={\rm span}\{\phi_i\}$ is a minimal recurrent enclosure. 
Hence $\L$ is positive recurrent and $\hh=\oplus_i\V_i$ is a decomposition of the local space $\hh$ in minimal orthogonal enclosures.

However, for our study, we are interested in a decomposition of the form described in \eqref{R-decomposition} and in particular we should identify the enclosures $\chi_\alpha$, which will be given by the direct sum of some of the $\V_i$'; indeed, we can see that
$\V_i$ and $\V_l$ are in the same $\chi_\alpha$ if and only if for every $j=1,\dots, 2d$, $\zeta_{i,j}=\zeta_{l,j}=:\zeta_{\alpha,j}$. This reflects on the structure of the Kraus operators, that will also be written as $L_j=\sum_{\alpha \in A} \zeta_{\alpha,j} p_{\chi_\alpha}$, $j=1,\dots, 2d$.

In this simple example, the probability law of the shift $X_n-X_0$ is a convex combination of $|A|$ multinomial distributions with parameters $(|\zeta_{\alpha,1}|^2,\dots, |\zeta_{\alpha,2d}|^2)$: for every $n \geq 1$
\[\pp_\rho(X_1-X_0=e_{j_1},\dots, X_n-X_{n-1}=e_{j_n})=\sum_{\alpha=1}^{|A|} \underbrace{\sum_{\underline{k} \in \mathbb{Z}^d}\tr(p_{\chi_\alpha} \rho(\underline{k}))}_{=:a_\alpha (\rho)} \prod_{k=1}^n |\zeta_{\alpha,j_k}|^2,
\]
where $e_1,\dots, e_d$ is the canonical basis of $\rr^d$ and $e_{2j}=-e_j$ for $j=1,\dots,d$. Applying the central limit theorem for the mean of i.i.d. random variables, we see that

\begin{equation} \label{eq:mixPS}
\lim_{n \rightarrow +\infty} {\rm dist} \left (\pp_{\rho,n},\sum_{\alpha=1}^{|A|} a_\alpha (\rho) {\mathcal N}\left(\sqrt{n}m_\alpha,D_\alpha\right) \right )=0
\end{equation}
where $m_\alpha=\sum_{j=1}^{2d} |\zeta_{\alpha,j}|^2 e_j$ and $D_\alpha=\sum_{j=1}^d (|\zeta_{\alpha,j}|^2+|\zeta_{\alpha,2j}|^2) \ket{e_j}\bra{e_j}$.  

Similarly, if we apply Theorem \ref{thm:clt}, we find again relation (\ref{eq:mixPS}) (in this case computations for the asymptotic means and covariance matrices are very easy).  

Also, by applying Theorem \ref{thm:ld}, we can state that a large deviations' principle holds for the process $\frac{X_{n}-X_{0}}{n}$ and the rate function is given by 
\[
\Lambda_\rho(x):=\min_{\alpha : a_\alpha(\rho) \neq 0} \Lambda_\alpha(x), \quad x \in \mathbb{R}^d
\]
where $\Lambda_\alpha(x)=\sup_{u \in \mathbb{R}^d} \{\langle u,x \rangle -\log(\lambda_{\alpha,u})\}$ and $\lambda_{\alpha,u}=\sum_{j=1}^{2d}\vert \zeta_{\alpha,j}\vert^{2}{\rm e}^{u\cdot e_{j}}$.
\subsection{An example with non trivial transient space}
\label{Example2}
\begin{figure}[!h] 
\centering
\begin{subfigure}[b]{0.8\textwidth}
\includegraphics[width=0.8\textwidth]{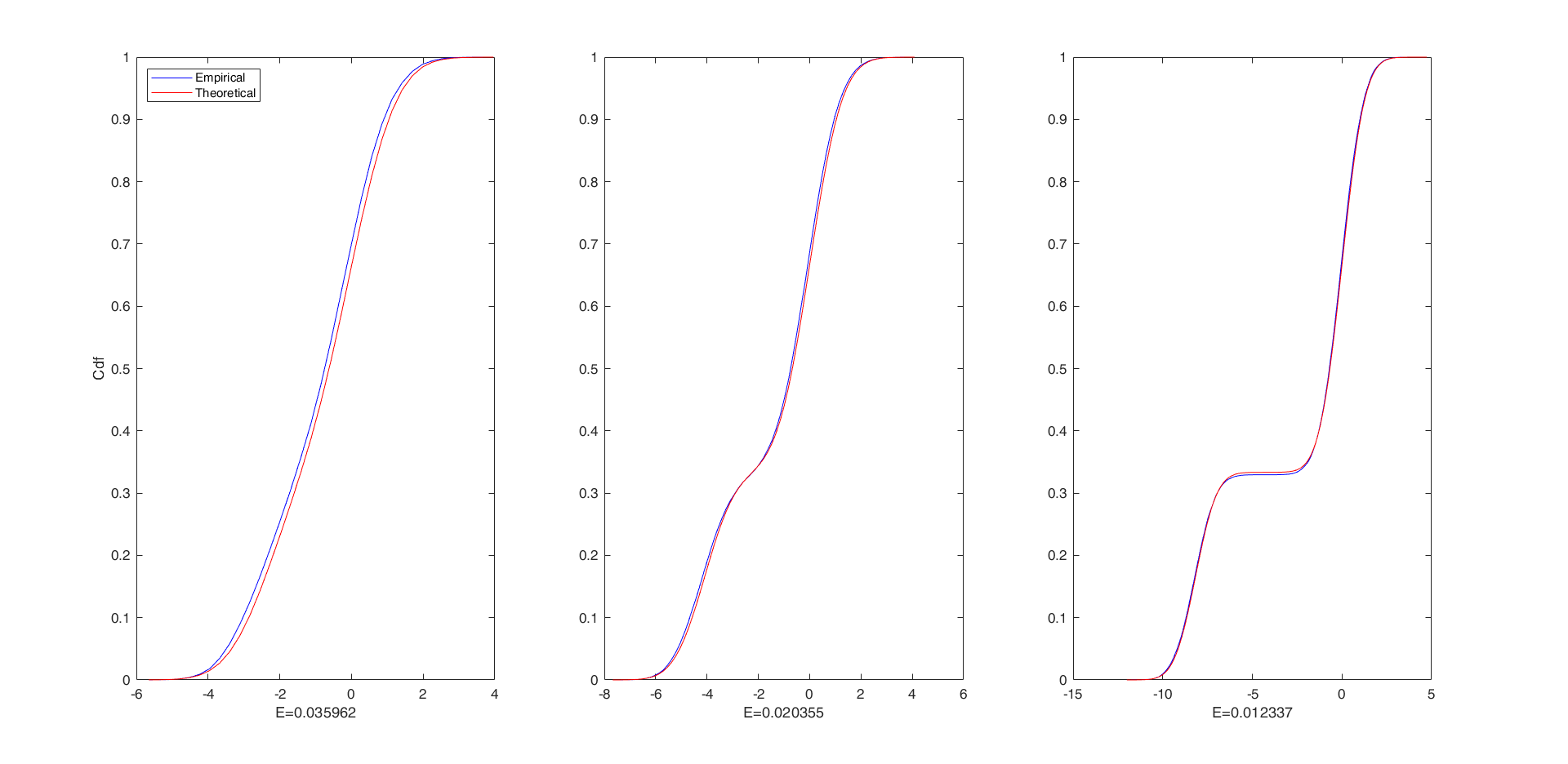}
\caption{$\rho=\frac{1}{3} (\ketbra{e_1}{e_1}+\ketbra{e_2}{e_2}+\ketbra{e_3}{e_3})$ and $p_3=\frac{1}{2}$.}  \label{fig:1}
\end{subfigure}
\begin{subfigure}[b]{0.8\textwidth}
         \includegraphics[width=0.8\textwidth]{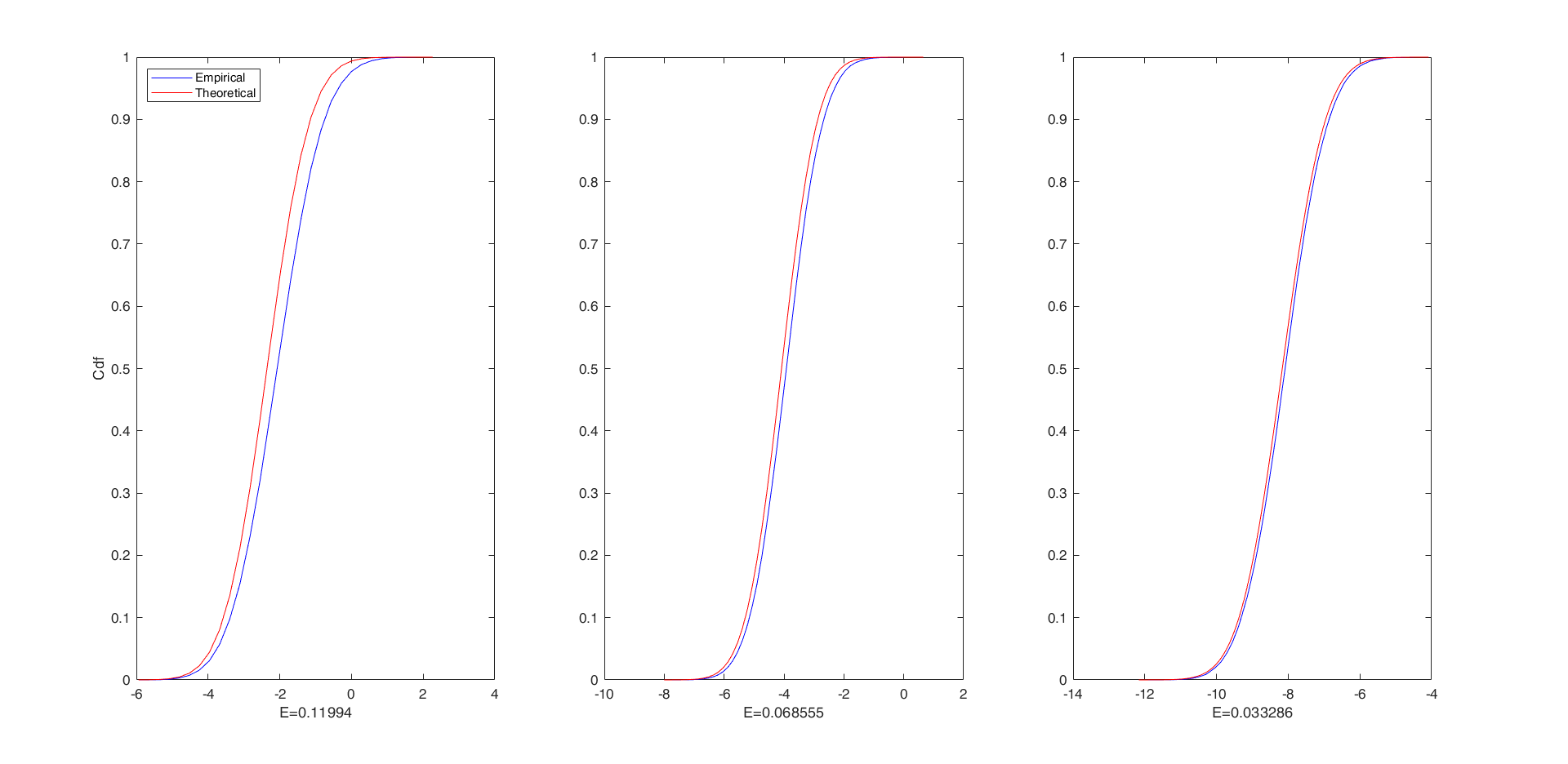} 
        \caption{$\rho=\ket{e_0}\bra{e_0}$ and   $p_3=\frac{1}{2}$.}  \label{fig:2}
     \end{subfigure}
\end{figure} \label{fig:cdfs}
We consider a family of HOQRW with local Hilbert space $\hh={\mathbb C}^4$, including the walk defined in Example \ref{example2}.
We introduce the parameters $p_1,p_2,p_3\geq 0$ such that $\sum_{i=1}^3p_i=\frac{1}{2}$ 
and define left and right Kraus operators
\[
L=\begin{pmatrix}\frac{1}{2\sqrt{2}}& 0&0&0\\[6pt]
\sqrt{\frac{p_1}{2}}& \frac{1}{\sqrt{2}}&0&0\\[6pt]
\sqrt{\frac{p_2}{2}}&0&  \frac{1}{\sqrt{2}}&0\\[6pt] 
-\sqrt{\frac{p_3}{3}}&0& 0& \frac{2}{\sqrt{3}}\\[6pt]   
\end{pmatrix}, 
\quad
R=\begin{pmatrix}\sqrt{\frac{3}{8}}& 0&0&0\\[6pt]
-\sqrt{\frac{p_1}{2}}& \frac{1}{\sqrt{2}}&0&0\\[6pt]
-\sqrt{\frac{p_2}{2}}&0&  \frac{1}{\sqrt{2}}&0\\[6pt] 
\sqrt{\frac{2p_3}{3}}&0& 0& \frac{1}{\sqrt{3}}\\[6pt]   \end{pmatrix}.
\]
Notice that Example 	\ref{example2} corresponds to the case $p_1=p_2=0,\; p_3=1/2$.  

This family of local channels revealed to be very useful since, though with a low dimensional local Hilbert space, it can display already a more sophisticated structure of the decomposition of the local space. Indeed, the transient subspace is non trivial and the recurrent subspace is reducible as a sum of two $\chi_\alpha$, one which is a minimal enclosure and one which is not.

Let $\{e_i\}_{i=0}^3$  be the canonical basis of $\h$. It is immediate to see, for instance by computing explicitly the invariant states of the corresponding local channel $\L$, that ${\cal T}={\rm span}\{e_0\}$, ${\cal R}={\rm span}\{e_1,e_2,e_3\}$ and the decomposition of the recurrent space is the following:
\[
{\cal R}= \underbrace{{\rm span}\{e_1,e_2\}}_{\chi_1}\oplus \underbrace{{\rm span}\{e_3\}}_{\chi_2}.
\]
With simple direct computations one can find the parameters of the limit Gaussians: for the enclosure $\chi_1$ one has mean $m_1=0$ and variance $D_1=1$, while for $\chi_2$ one has parameters $m_2=-\frac{1}{3}$ and $D_2=\frac{8}{9}$. 

For this walk, depending on the different choice of the initial state $\rho$, we can observe either only one of the two Gaussians or various mixtures of the two Gaussians. When the $\rho({\underline k})$'s are all contained in a same $\chi_\alpha$, then we shall see only the Gaussian associated with the same $\chi_\alpha$, $\alpha=1,2$.

In order to consider the asymptotic behavior, we need the following absorption operators:
\[A(\chi_2)=2p_3 \ket{e_0}\bra{e_0} + \ket{e_3}\bra{e_3}, 
\quad 
A(\chi_1)=1_\hh-A(\chi_2).
\]

We can take for simplicity $X_0=0$ and it will be particularly interesting to consider an initial state $\rho$ supported in the transient subspace, and so of the form $\rho =\rho_0\otimes \ket{0}\bra{0}$, with $\rho_0=(\rho_0(i,j))_{i,j=0,...3}$ a non negative unit-trace matrix in $M_4({\mathbb C})$.
Then we can explicitly compute the weights of the Gaussian mixture appearing in the generalized CLT, which will be given by the quantum absorption probabilities
\[\begin{split}
a_1(\rho)&=2p_3 \rho_0(0,0)+\rho_0(3,3),
\\
a_2(\rho)&=1-a_1(\rho)=2(p_1+p_2)\rho_0(0,0) + \rho_0(1,1)+\rho_0(2,2).
\end{split}
\]
We illustrate our result also by numerical simulations. We used $N=5\times 10^4$ samples of $\frac{X_n}{\sqrt{n}}$ for $n =50,150,600$ 
in order to estimate their probability distribution and we compared it with the expected convex combination of Gaussian measures. Figures \ref{fig:ex1} and \ref{fig:ex2} show the histograms of $\frac{X_n-X_0}{\sqrt{n}}$ at the three different times (n=50,150,600) for the choice $p_3=\frac{1}{2}$ and for two different choices of the local initial state $\rho_0$. In Figure \ref{fig:cdfs} we reported the empirical and the expected cumulative function. The same plots for the choice $p_3=\frac{1}{6}$ are reported in Figure \ref{fig:3}. Once again we remark that, tuning initial state and absorption rates the Gaussian laws in the mixture do not change, but only their weights.

Finally, numerical simulations can also help us to have a better intuition of the behavior of the processes $(Y_n)_n$ used to introduce the laws of the family $\pp'_\rho$ (recall Lemma \ref{lem:supermartingale}). For the enclosure $\chi_1$, for instance, the corresponding process $Y_n=\tr(\chi_1 \rho_n)$ should help us to select the trajectories absorbed in some sense in $\chi_\alpha$.
In Figure \ref{fig:traj} we trace the trajectories of $(Y_n)_n$ along $800$ steps, which show how $Y_\infty$ is a Bernoulli random variable with parameter $\mathbb{E}_\rho[\tr(A(\chi_1)\rho_0)]$; hence in this case $\pp^1_\rho(\cdot)$ (defined as in relation (\ref{eq:density2})) is equal to $ \pp_\rho( \cdot |B)$ where $B=\{Y_\infty=1\}=\{\lim_{n\rightarrow + \infty} \lVert p_{\chi_1} \rho_n p_{\chi_1}-\rho_n \rVert=0 \}$ and it represents the probability obtained conditioning $\pp_\rho$ to the event of ``being absorbed in $\chi_1$''.

\begin{figure}[!htbp]
        \centering
        \includegraphics[width=0.8\textwidth]{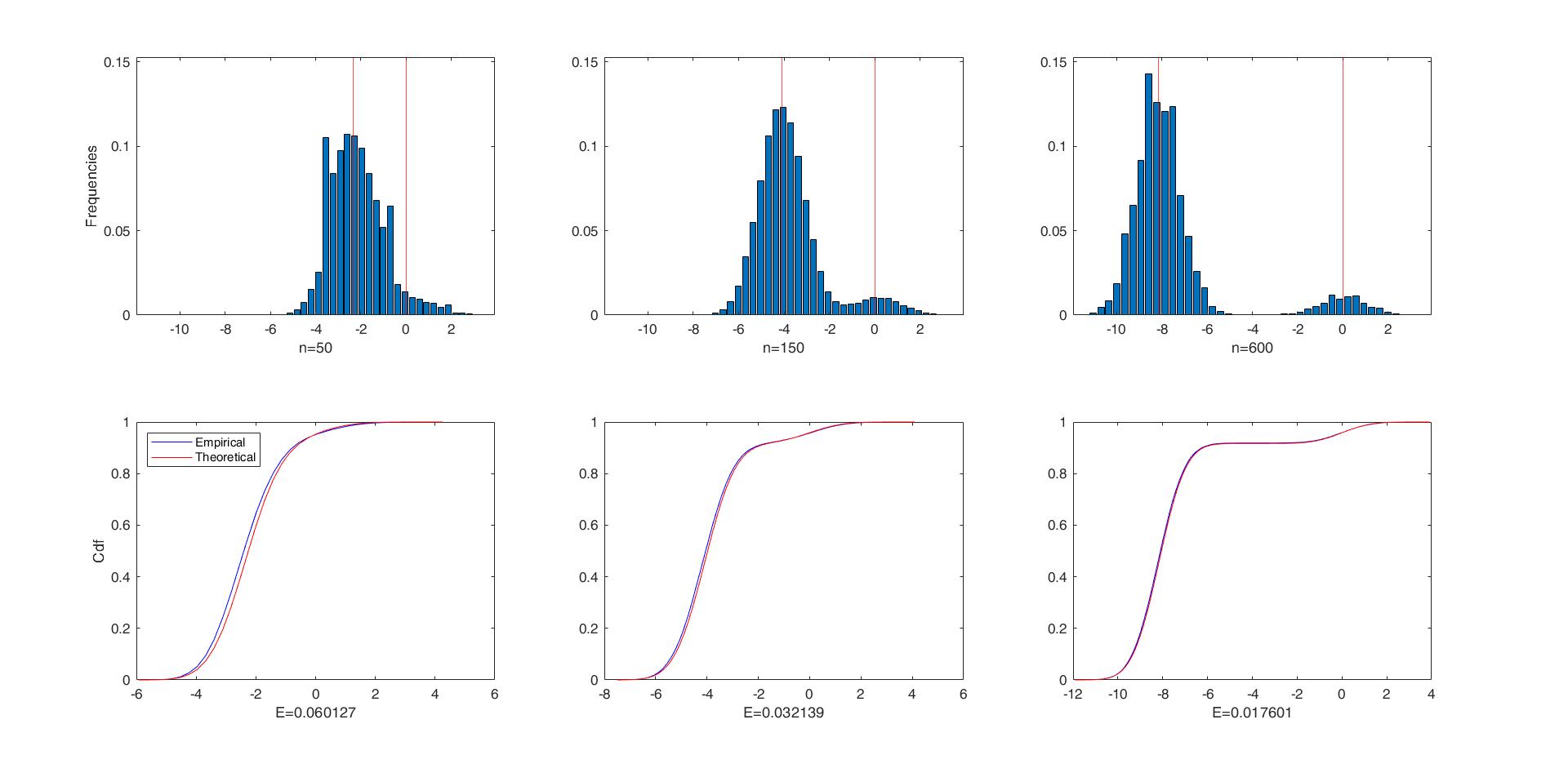} 
\caption{$\rho_0=\frac{1}{8} \ket{e_0}\bra{e_0}+\frac{7}{8} \ket{e_3}\bra{e_3}$ and $p_3=\frac{1}{6}$.} \label{fig:3}
\end{figure}

\begin{figure}[!htbp]
\centering
 \begin{subfigure}[b]{0.8\textwidth}
      \includegraphics[width=0.7\textwidth]{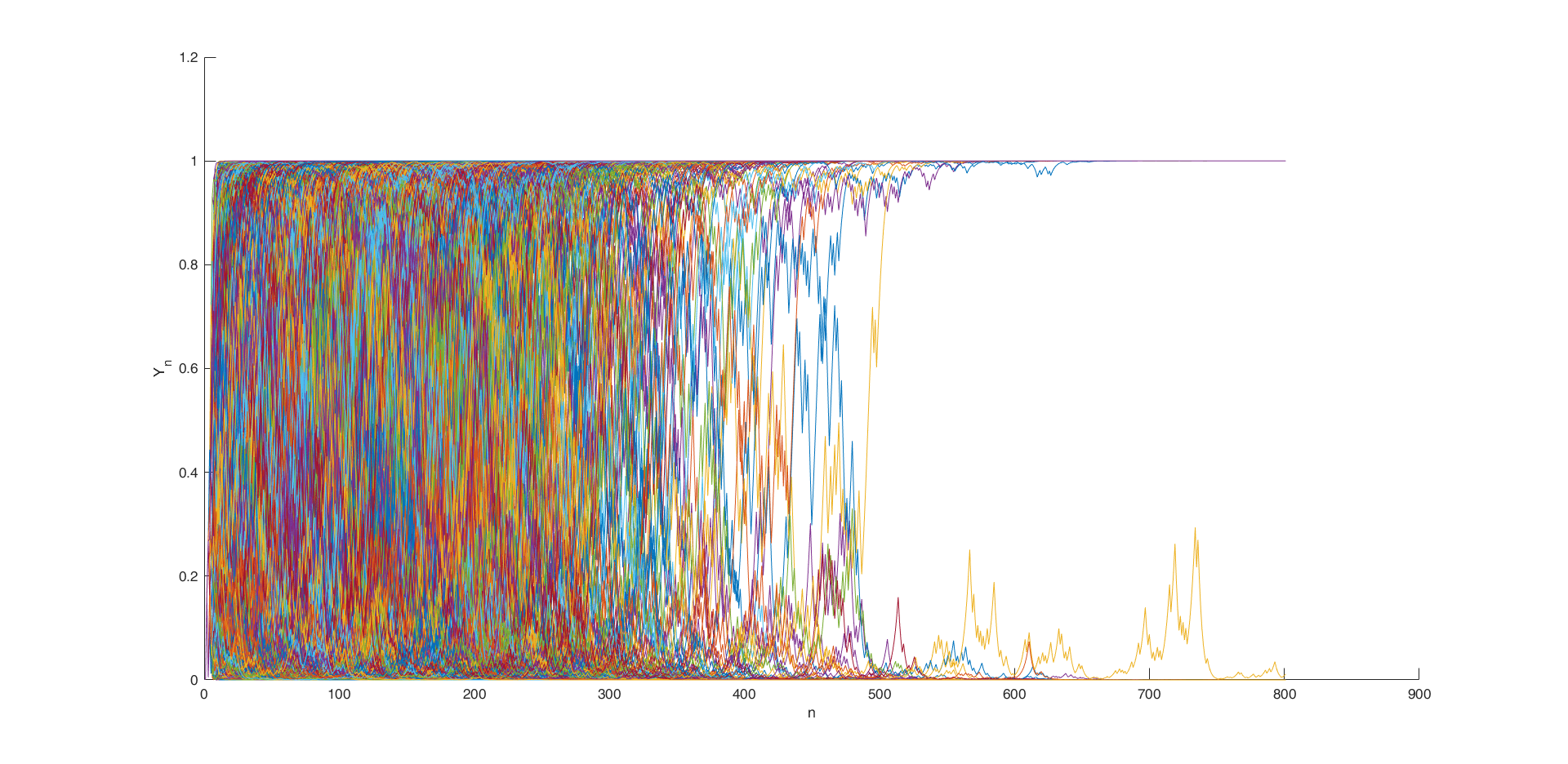} 
      \caption{The graph represents $N=10^4$ trajectories of $Y_n^1$ along $800$ steps ($\rho_0=\ketbra{e_0}{e_0}$ and $p_3=\frac{1}{6}$).}  \label{fig:traj}
\end{subfigure}
\begin{subfigure}[b]{0.8\textwidth}
      \includegraphics[width=0.8\textwidth]{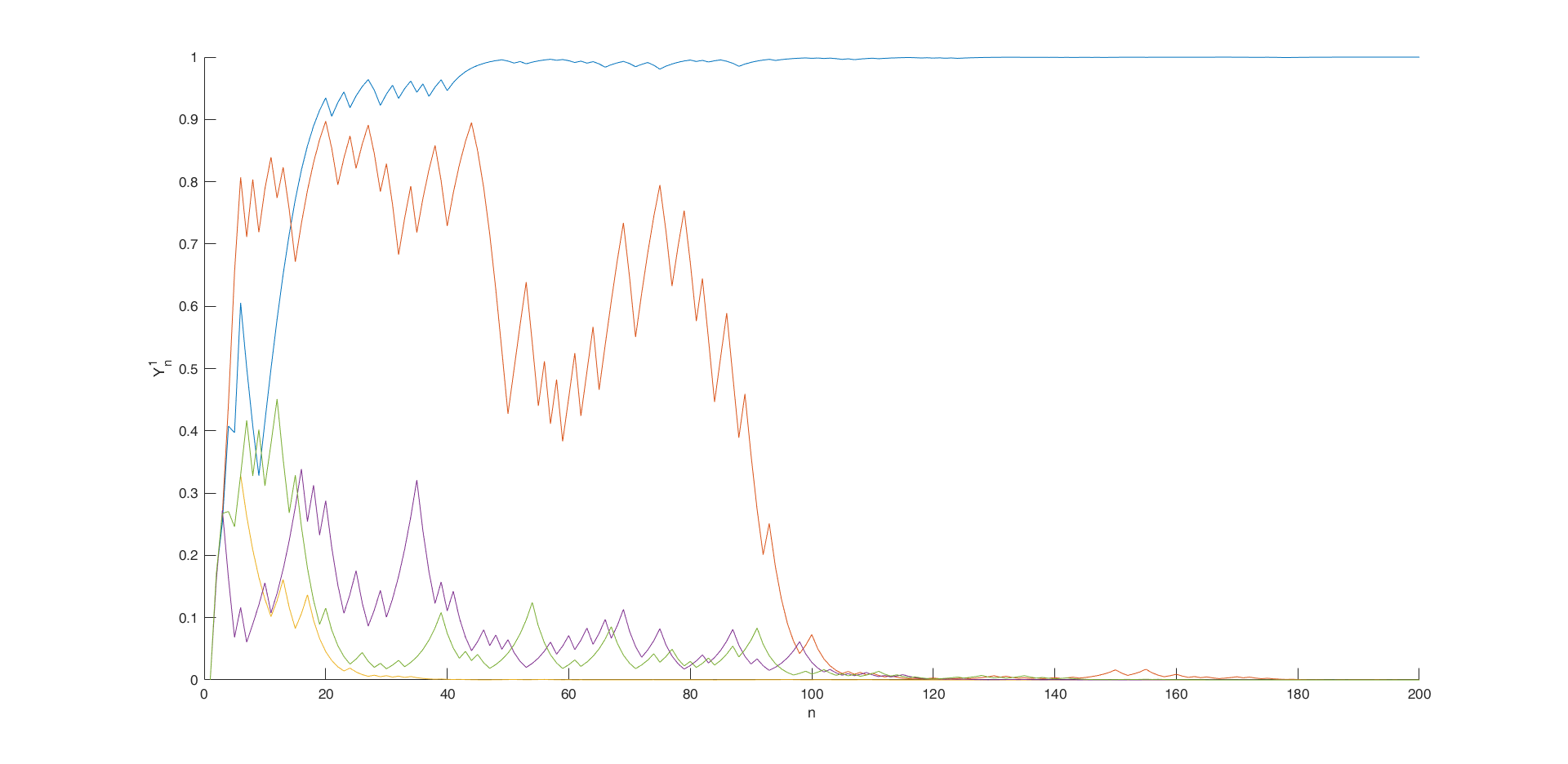} 
      \caption{The graph represents $N=5$ among the previous trajectories.}  \label{fig:traj}
\end{subfigure}
\caption{The behavior of $Y_{n}^{1}$.}
\end{figure}
Note that the frequence of trajectories such that $Y_{800} >0.99$ is equal to $0.3388$, and the frequence of trajectories for which $Y_{800} <0.01$ is $0.6612$. This is in agreement with $a_1(\rho)=\mathbb{E}_\rho[\tr(A(\chi_1)\rho_0)]=\frac{1}{3}$.

{\noindent\bf Acknowledgements.} The authors acknowledge the support of the INDAM GNAMPA project 2020 ``Evoluzioni markoviane quantistiche'' and of
the Italian Ministry of Education, University and Research (MIUR) for the
FFABR 2017 program and for the Dipartimenti di Eccellenza Program (2018–
2022)—Dept. of Mathematics ``F. Casorati'', University of Pavia.
\bibliographystyle{abbrv}
\bibliography{biblio}

\begin{thebibliography}{10}

\bibitem{AGS}
S.~Attal, N.~Guillotin-Plantard, and C.~Sabot.
\newblock Central limit theorems for open quantum random walks and quantum
  measurement records.
\newblock {\em Ann. Henri Poincar\'{e}}, 16(1):15--43, 2015.

\bibitem{APSS}
S.~Attal, F.~Petruccione, C.~Sabot, and I.~Sinayskiy.
\newblock Open quantum random walks.
\newblock {\em J. Stat. Phys.}, 147(4):832--852, 2012.

\bibitem{APS}
S.~Attal, F.~Petruccione, and I.~Sinayskiy.
\newblock Open quantum walks on graphs.
\newblock {\em Phys. Lett. A}, 376(18):1545--1548, 2012.

\bibitem{BBP}
I.~Bardet, D.~Bernard, and Y.~Pautrat.
\newblock Passage times, exit times and {D}irichlet problems for open quantum
  walks.
\newblock {\em J. Stat. Phys.}, 167(2):173--204, 2017.

\bibitem{BN}
B.~Baumgartner and H.~Narnhofer.
\newblock The structures of state space concerning quantum dynamical
  semigroups.
\newblock {\em Rev. Math. Phys.}, 24(2):1250001, 30, 2012.

\bibitem{BrH}
H.~Bringuier.
\newblock Central limit theorem and large deviation principle for continuous
  time open quantum walks.
\newblock {\em Ann. Henri Poincar\'{e}}, 18(10):3167--3192, 2017.

\bibitem{Br}
W.~o. Bryc.
\newblock A remark on the connection between the large deviation principle and
  the central limit theorem.
\newblock {\em Statist. Probab. Lett.}, 18(4):253--256, 1993.

\bibitem{CG}
R.~Carbone and F.~Girotti.
\newblock Absorption in invariant domains for semigroups of quantum channels.
\newblock {\em Ann. Henri Poincar\'{e}}, pages 1--34, 2021.

\bibitem{CJ}
R.~Carbone and A.~Jen\v{c}ov\'{a}.
\newblock On {P}eriod, {C}ycles and {F}ixed {P}oints of a {Q}uantum {C}hannel.
\newblock {\em Ann. Henri Poincar\'{e}}, 21(1):155--188, 2020.

\bibitem{CP}
R.~Carbone and Y.~Pautrat.
\newblock Homogeneous open quantum random walks on a lattice.
\newblock {\em J. Stat. Phys.}, 160(5):1125--1153, 2015.

\bibitem{PC}
R.~Carbone and Y.~Pautrat.
\newblock Irreducible decompositions and stationary states of quantum channels.
\newblock {\em Rep. Math. Phys.}, 77(3):293--313, 2016.

\bibitem{dembo2011large}
A.~Dembo and O.~Zeitouni.
\newblock Large deviations techniques and applications (1998).
\newblock {\em Applications of Mathematics}, 38, 2011.

\bibitem{DhMu}
A.~Dhahri and F.~Mukhamedov.
\newblock Open quantum random walks, quantum {M}arkov chains and recurrence.
\newblock {\em Rev. Math. Phys.}, 31(7):1950020, 30, 2019.

\bibitem{Du}
R.~M. Dudley.
\newblock {\em Real Analysis and Probability}.
\newblock Cambridge Studies in Advanced Mathematics. Cambridge University
  Press, 2 edition, 2002.

\bibitem{JL}
T.~S. Jacq and C.~F. Lardizabal.
\newblock Homogeneous open quantum walks on the line: criteria for site
  recurrence and absorption.
\newblock {\em Quantum Inf. Comput.}, 21(1-2):37--58, 2021.

\bibitem{JP}
A.~Jen$\check{\rm c}$ov\'a and D.~Petz.
\newblock Sufficiency in quantum statistical inference.
\newblock {\em Commun. Math. Fisica}, 263(1):259--276, 2006.

\bibitem{Ka}
T.~Kato.
\newblock {\em Perturbation Theory for Linear Operators}.
\newblock Springer - Verlag, 1980.

\bibitem{KPS}
G.~Kemp, I.~Sinayskiy, and F.~Petruccione.
\newblock Lazy open quantum walks.
\newblock {\em Phys. Rev. A}, 102(1):012220, 14, 2020.

\bibitem{KoYo}
C.~Ko and H.~J. Yoo.
\newblock Mixture of gaussians in the open quantum random walks.
\newblock {\em Quantum Inf. Process.}, 19(8):1--31, 2020.

\bibitem{KKSY}
C.~K. Ko, N.~Konno, E.~Segawa, and H.~J. Yoo.
\newblock Central limit theorems for open quantum random walks on the crystal
  lattices.
\newblock {\em J. Stat. Phys.}, 176(3):710--735, 2019.

\bibitem{KonYo}
N.~Konno and H.~J. Yoo.
\newblock Limit theorems for open quantum random walks.
\newblock {\em J. Stat. Phys.}, 150(2):299--319, 2013.

\bibitem{MSKPE}
A.~Marais, I.~Sinayskiy, A.~Kay, F.~Petruccione, and A.~Ekert.
\newblock Decoherence-assisted transport in quantum networks.
\newblock {\em New J. Phys.}, 15(1):013038, 2013.

\bibitem{Pe}
C.~Pellegrini.
\newblock Continuous time open quantum random walks and non-{M}arkovian
  {L}indblad master equations.
\newblock {\em J. Stat. Phys.}, 154(3):838--865, 2014.

\bibitem{Pas}
P.~Sadowski and L.~Pawela.
\newblock Central limit theorem for reducible and irreducible open quantum
  walks.
\newblock {\em Quantum Inf. Process.}, 15(7):2725--2743, 2016.

\bibitem{PS3}
I.~Sinayskiy and F.~Petruccione.
\newblock Efficiency of open quantum walk implementation of dissipative quantum
  computing algorithms.
\newblock {\em Quantum Inf. Process.}, 11(5):1301--1309, 2012.

\bibitem{PS}
I.~Sinayskiy and F.~Petruccione.
\newblock Properties of open quantum walks on {$\mathbb{Z}$}.
\newblock {\em Physica Scripta}, T151:014077, 2012.

\bibitem{PS2}
I.~Sinayskiy and F.~Petruccione.
\newblock Open quantum walks.
\newblock {\em Eur. Phys. J. Spec. Top.}, 227(15):1869--1883, 2019.

\bibitem{Uma}
V.~Umanit{\`a}.
\newblock Classification and decomposition of quantum {M}arkov semigroups.
\newblock {\em Probab. Theory Related Fields}, 134(4):603--623, 2006.

\bibitem{Wo}
M.~{Wolf}.
\newblock {Quantum Channels \& Operations Guided Tour}.
\newblock {\em Online Lecture Notes}, 2012.

\end{thebibliography}

\end{document}